\newcommand{\stkout}[1]{\ifmmode\text{\sout{\ensuremath{#1}}}\else\sout{#1}\fi}
\begin{document}


\markboth{G. Yigit et al.}{Instability in a reaction-diffusion system}

\title{Parameter spaces for cross-diffusive-driven instability in a reaction-diffusion system on an annular domain}


\author{Gulsemay Yigit}

\address{Department of Mathematics, Faculty of Engineering and Natural Sciences,\\ Bahcesehir University, Istanbul, T\"{u}rkiye\\
Mathematics Department,
Mathematics Building, 1984 Mathematics Road,\\
             The University of British Columbia, Vancouver, BC Canada V6T 1Z2.\\
gulsemay.yigit@bau.edu.tr}

\author{Wakil Sarfaraz}
\address{Corndel Ltd., 410 Highgate Studio 53-79 Highgate Road, \\
London NW5 1TL, United Kingdom\\
ICT,  Bahrain Polytechnic,\\ PO Box 33349, Isa Town, Kingdom of Bahrain\\
wakil.sarfaraz@corndel.com}

\author{Raquel Barreira}
\address{Instituto Polit\'{e}cnico de Set\'{u}bal,  Escola Superior de Tecnologia de Set\'{u}bal
\\Campus do IPS Estefanilha, 2914-508 Set\'{u}bal, Portugal\\
Centro de Matem\'{a}tica, Aplicac\~{o}es Fundamentais e Investigac\~{a}o Operacional (CMAFcIO), \\
Universidade de Lisboa, Portugal\\
raquel.barreira@estsetubal.ips.pt}

\author{Anotida Madzvamuse\footnote{Author for correspondence}}
\address{Mathematics Department,
Mathematics Building, 1984 Mathematics Road,\\
             The University of British Columbia, Vancouver, BC Canada V6T 1Z2.\\
Department of Mathematics and Applied Mathematics, \\University of Pretoria, Pretoria 0132, South Africa\\
Department of Mathematics and Applied Mathematics, \\University of Johannesburg, \\PO Box 524 Auckland Park, 2006, South Africa\\
University of Zimbabwe, Department of Mathematics and Computational Science, \\
    Mt Pleasant, Harare, Zimbabwe\\
am823@math.ubc.ca}


\maketitle


\begin{abstract}
In this work,  the influence of geometry and domain size on spatiotemporal pattern formation is investigated to establish parameter spaces for a cross-diffusive reaction-diffusion model on an annulus.  By applying linear stability theory, we derive conditions which can give rise to Turing,  Hopf and transcritical types of diffusion-driven instabilities.  We explore whether selection of a sufficiently large domain size, together with the appropriate selection of parameters, can give rise to the development of patterns on non-convex geometries e.g. annulus.  Hence, the key research methodology and outcomes of our studies include: a complete analytical exploration of the spatiotemporal dynamics in an activator-depleted reaction-diffusion system; a linear stability analysis to characterise the dual roles of cross-diffusion and domain size of pattern formation on an annulus region; the derivation of the instability conditions through lower and upper bounds of the domain size;  the full classification of the model parameters,  and a demonstration of how cross-diffusion relaxes the general conditions for the reaction-diffusion system to exhibit pattern formation. To validate theoretical findings and predictions,  we employ the  finite element method to reveal spatial and spatiotemporal patterns in the dynamics of the cross-diffusive reaction-diffusion system within a two-dimensional annular domain. These observed patterns resemble those found in ring-shaped cross-sectional scans of hypoxic tumours. Specifically, the cross-section of an actively invasive region in a hypoxic tumour can be effectively approximated by an annulus.
\end{abstract}

\keywords{Reaction-diffusion systems; cross-diffusion; pattern formation; parameter space; spatiotemporal dynamics; annular domain; finite element method; standing wave.}

\section{Introduction}
\noindent Patterns varying in space and time are abundantly seen in biological systems, from the development of embryos, to the patterning of animal skin pigmentation \cite{murray2001mathematical,baker2008partial,gierer1972theory,satnoianu2000turing,madzvamuse2000numerical}.
The concept of pattern formation in reaction-diffusion systems originates from Turing's seminal work in the field of biological morphogenesis \cite{turing1990chemical}. A primary mechanism proposed to be responsible for driving the formation of these patterns is the instability propelled by considering the effect of the diffusion in the dynamical interplay of species undergoing the process of reaction-diffusion. Since the initial work of Alan Turing, the theory of reaction-diffusion has been significantly expanded and explored by subsequent researchers in many fields of science \cite{gierer1972theory,schnakenberg1979simple,murray2001mathematical,baker2008partial,gierer1972theory,satnoianu2000turing,madzvamuse2000numerical,landge2020pattern,diez2023turing}.

Cross diffusion, has an immense effect in the study of pattern formation, biological systems, and chemical reactions, where the interactions between different components play a crucial role in the spatiotemporal dynamics of the reaction-diffusion systems. Cross-diffusion refers to a special type of a diffusion process in which one of the species influences the flux of the concentration gradient of others. Cross-diffusion coefficients are positioned in the off-diagonal elements of the diffusion matrix. Empirical observations reveal that these coefficients can attain positive, negative, zero and in some cases, the diffusion coefficients can be of substantially different magnitudes. The negative sign of cross-diffusion term indicates that one species is attracted towards higher concentrations of another species \cite{vanag2009cross}.
Turing instabilities, emerging as a consequence of cross-diffusion, have been widely studied and presented in the literature within a substantial class of predator-prey systems involved in ecological or competitive interactions 
\cite{gambino2018cross,li2019cross, zhang2020characterizing, song2023cross, yang2023cross}. Studies include applications of cross-diffusive systems to pattern formation  \cite{yi2021turing, kersner2022competition, ritchie2022turing, kumari2022cross, liu2022dynamics,yang2022cross},
bacterial chemotaxis \cite{bellomo2022chemotaxis, kim2023modeling,gaffney2023spatial},
epidemology \cite{duan2019turing,chang2020cross, hu2021turing, mohan2021positive,hu2022dynamics} and so forth.  Most of these studies include linear case of cross-diffusion terms, in some instances the research also includes nonlinear cross-diffusion, which can be found in \cite{gambino2012turing,gambino2013pattern}. Another application relevant to cross-diffusive systems is illustrated by the process of electrodeposition \cite{lacitignola2018cross,lacitignola2021turing}.
A study on spatiotemporal pattern formation for a cross-diffusive reaction-diffusion system is given by \cite{yang2022cross}.  Comparing our contribution to the study of \cite{yang2022cross}, our distinctive results show the effect of the ring shape domain size for the system to exhibit spatiotemporal pattern formation.

Our research introduces a novel aspect in the field of cross-diffusive reaction diffusion systems through the analysis and numerical simulations of spatiotemporal patterns, with a particular emphasis on the ring shape domain. To the best of our knowledge, to date, very little analysis and simulations of reaction-diffusion systems characterising the effects of domain shape size and and cross-diffusion have been undertaken in the literature.. In this respect, we aim to contribute insights to the understanding of spatiotemporal patterns on the ring shape domain. By extending the idea of spatiotemporal pattern formation without cross-diffusion with the relation between the domain size system parameters presented in \cite{sarfaraz2017classification,sarfaraz2018domain,sarfaraz2020stability}, we provide novel results for the cross-diffusive systems. The focus of the works of \cite{sarfaraz2017classification,sarfaraz2018domain,sarfaraz2020stability} primarily addresses spatiotemporal pattern formation without cross-diffusion. Therefore, the current work constitutes as substantial extension of current-state-of-the-art analysis of reaction-diffusion systems by exploring the collective effects of the cross-diffusion, the geometry, and the domain size on the dynamical properties of the reaction-diffusion system (RDS). On the other hand,  understanding pattern formation in reaction-diffusion systems on growing domains is crucial for modeling dynamic biological processes.  Studies such as \cite{cotterell2015local,krause2021introduction,madzvamuse2010stability,klika2017history,van2021turing,krause2023concentration,tsubota2024bifurcation} highlight the significant influence of domain growth on patterning.   Our work distinguishes from such studies,  by deriving conditions on domain size for studying instability in reaction-diffusion systems on a stationary domain. Extensions of this work to growing domains and evolving surfaces is the subject of our current studies and is beyond the scope of this manuscript.


Obtaining closed-form solutions to reaction diffusion systems is not practical or possible for the majority of cases involving nonlinear reaction kinetics. This complexity is overcome by using numerical approaches which provide indispensable tools for gaining insights into the spatiotemporal dynamics of RDS. Therefore, the significance of numerical methods becomes evident in their ability to address, computationally, the nature of RDSs and facilitate their comprehensive understanding.  Within this context, numerous computational works have been contributed to explore the numerical simulations of RDSs. One such powerful numerical method, widely acknowledged in the literature, is the finite element method, specifically used for the spatial discretisation of RDSs. This numerical method enhances our ability to analyse and interpret the spatiotemporal dynamical behaviour of RDSs. Examples of studies which investigate the computational aspects of RDSs on different types of geometries including stationary and growing domains can be found in \cite{madzvamuse2007velocity,barreira2011surface,madzvamuse2014fully,lakkis2013implicit,tuncer2017projected,frittelli2021bulk,song2022efficient,frittelli2023bulk,frittelli2017lumped}.
We have employed the finite element method to enable the visualisation of patterning structures resulting from Hopf and transcritical type of bifurcations as well as from Turing instabilities. 

This work introduces a set of novel theoretical findings by deriving bifurcation results through an analytical approach motivated by the dynamical systems' theory for a cross-diffusive system defined on a non-convex geometry.  The significance of these results have the potential to offer profound insights in understanding both the effects of domain size on the dynamics and the impact of cross-diffusion on annular regions in the context of pattern formation.  Our distinctive contributions includes a comprehensive framework elucidating the influence of geometry on the development of spatiotemporal pattern formations for a cross-diffusive reaction-diffusion model as well. Our preliminary studies \cite{yigit2024domain,sarfaraz2024understanding} leading to this study,  finally, allowed us to fully understand and compare the geometric impact of spatiotemporal pattern formation of cross-diffusive systems on flat, circular and annular two-dimensional geometries.  The focus of \cite{yigit2024domain,sarfaraz2024understanding} primarily examines the role of convex geometries on Turing's theory of pattern formation. The significant novelty in the present study lies in the expansion of the methodology,  where we analytically provide new bifurcation results for a cross-diffusion reaction diffusion systems within a non-convex geometry. A key finding of this study is the parameter spaces generated on non-convex geometries are substantially different from those of convex geometries.

Therefore, this article is organised as follows: In Section \ref{Modeleqn}, we present the non-dimensional reaction-diffusion equations with linear cross-diffusion on the annular region, a non-convex geometry. We provide the necessary conditions for the well-posedness of the model equations. A detailed linear stability analysis of the cross-diffusive reaction-diffusion system is presented in Section \ref{sec:linearstability} providing the geometric effect of the annular region on the eigenfunctions and eigenmodes of the corresponding eigenvalue problem. In this section, we also present new bifurcation results for the cross-diffusive system to exhibit Hopf/transcritical and Turing types of instabilities. Section \ref{sec:ParameterSpaces} provides a comprehensive classification of parameter spaces corresponding to Hopf/transcritical type of bifurcation regions and Turing regions. These regions are generated using the theoretical results obtained in Section \ref{sec:linearstability}. In Section \ref{sec:FemSim}, we show the numerical simulations of cross-diffusive reaction-diffusion systems using the finite element method to confirm the proposed classification of parameter spaces and to validate the predicted dynamical behaviour based on theoretical considerations. In Section \ref{sec:Conclusion}, we provide the conclusive remarks and possible research directions of the present work.

\subsection{Model equations}\label{Modeleqn}
We consider an \emph{activator-depleted} reaction-diffusion system (RDS) with linear cross-diffusion modelling the dynamics of two chemical species $u(x,y,t)$ and $v(x,y,t)$ within a non-convex circular domain $\Omega = \{(x,y)\in\mathbb{R}^2: a^2<x^2+y^2<b^2 \}$. Here, the domain consists of an annulus with an inner radius $a$ and outer radius $b$. The boundary is represented as $\partial\Omega = \{(x,y)\in\mathbb{R}^2: x^2+y^2=a^2 \} \cup\{(x,y)\in\mathbb{R}^2: x^2+y^2=b^2 \}$. The RDS in the presence of linear cross-diffusion in non-dimensional form on the defined annulus under Neumann boundary conditions is given by,
\begin{equation}\label{r1}
    \begin{cases}
     \begin{cases}
     \displaystyle{\frac{\partial u}{\partial t} =\Delta_r u + d_v \Delta_r v + \gamma f(u,v)},  &
\smash{\raisebox{-1.6ex}{$(r,\theta)\in \Omega$, $t>0$ }}
\\[6pt]
        \displaystyle{\frac{\partial v}{\partial t} =d_u \Delta_r u + d\Delta_r v +  \gamma g(u,v),}\\
       \end{cases} \\
    \begin{cases}
 \displaystyle{\left(\frac{\partial u}{\partial r} + d_v \frac{\partial v}{\partial r}\right)\Big{|}_{r=a}} = \left(d_u
\frac{\partial u}{\partial r} + d \frac{\partial v}{\partial r}\right)\Big{|}_{r=a} = 0,   &\smash{\raisebox{-1.6ex}{$(r,\theta)\in \partial \Omega$, $t>0$, }}
\\[6pt]
\displaystyle{\left(\frac{\partial u}{\partial r} + d_v \frac{\partial v}{\partial r}\right)\Big{|}_{r=b}} = \left(d_u
\frac{\partial u}{\partial r} + d \frac{\partial v}{\partial r}\right)\Big{|}_{r=b} = 0, 
\end{cases}\\
u(r,\theta,0)=u_0(r,\theta), ~v(r,\theta,0)=v_0(r,\theta),~ (r,\theta)\in \Omega,~  t=0, 
    \end{cases}
\end{equation}
where  $\Delta_r $ denotes the Laplace operator in $(r, \theta)$ coordinates, provided later by equation \eqref{Laplace}.
The macroscopic dispersion approximated by mean-field effect of random movements (Brownian motion) of microscopic particles of the reacting and diffusing species motivates the use of the diffusion operator, which by incorporating the Fickian law of diffusion and conservation of mass (both for self-diffusive and cross-diffusive species) justifies the derivation of the cross-diffusive reaction-diffusion system given by \eqref{r1} \cite{murray2001mathematical,madzvamuse2015cross}.
The parameter $d$ in (\ref{r1}) is a non-dimensional diffusion coefficient representing the ratio of the dimensional diffusion coefficients defined as $d=\dfrac{D_v}{D_u}$. Similarly the constants $d_u$ and $d_v$ are non-dimensional ratios of cross-diffusion coefficients defined as $d_u=\dfrac{D_{uv}}{D_u}$ and $d_v=\dfrac{D_{vu}}{D_u}$, respectively. The quantities $D_u>0$, and $D_v>0$ are the dimensional diffusion coefficients for the specie $u$ and $v$, respectively. The cross-diffusion coefficient of $u$ influenced by the presence of the concentration of $v$ is denoted by $D_{uv}$ and that of $v$ influenced by the concentration presence of $u$ is represented by $D_{vu}$. A detailed calculation of the non-dimensionalization process resulting in system \eqref{r1} is presented in \cite{madzvamuse2015cross}.
The functions $f(u,v)=\alpha-u+u^2v$ and $g(u,v)=\beta-u^2v$ are the nonlinear reaction kinetics coupling the two species, also known as the activator-depleted kinetics with $\alpha>0$ and $\beta>0$. \cite{murray2001mathematical,gierer1972theory,schnakenberg1979simple}. 

{\bf Remark.} Note that, in order for the cross-diffusive system to be well-posed, it is necessary for the main diffusion parameter $d$ and the cross-diffusion parameters $d_u$ and $d_v$ to be given such that the inequality $d - d_u d_v > 0$ holds true \cite{madzvamuse2015cross}. Therefore, the condition on the positivity of the determinant of the diffusion matrix is satisfied and this enables us to write the boundary conditions as,
\begin{equation}
\begin{cases}
\displaystyle{{\frac{\partial u}{\partial r}\Big{|}_{r=a}}}=\displaystyle{{\frac{\partial v}{\partial r}\Big{|}_{r=a}}}=0, 
&\smash{\raisebox{-1.6ex}{$(r,\theta)\in \partial \Omega$, $t>0$, }}
\\[6pt]
\displaystyle{{\frac{\partial u}{\partial r}\Big{|}_{r=b}}}=\displaystyle{{\frac{\partial v}{\partial r}\Big{|}_{r=b}}}=0.
\end{cases}
\end{equation}

\section{Linear stability analysis for the cross-diffusive system on two dimensional annular region}\label{sec:linearstability}
	
System \eqref{r1} has a constant uniform steady state solution represented by $(u_s,v_s)=(\alpha+\beta,\frac{\beta}{(\alpha+\beta)^2})$ \cite{,murray2001mathematical,madzvamuse2015cross, madzvamuse2015stability}. This steady state is a unique positive stationary point that satisfies the nonlinear algebraic equations obtained by the $f(u_s,v_s)=g(u_s,v_s)=0$, along with the zero-flux boundary conditions imposed on System \eqref{r1}.
Involving local perturbation of System \eqref{r1}, the standard approach of linear stability theory around the uniform steady-state $(u_s+\epsilon\bar{u},v_s+\epsilon\bar{u})$ with $0<\epsilon \ll 1$ is used.
The perturbation variables $(\bar{u},\bar{v})$ are assumed to be sufficiently smooth satisfying the conditions for Taylor expansion of two variables. We proceed with neglecting $O(\epsilon^2)$ and higher order terms. Therefore, we obtain the following linearized matrix-vector form as,
\begin{equation}\label{rr2}
{\bar {\textbf{w}_t}}=
\textbf{D}\Delta\bar {\textbf{w}}
+\gamma
\textbf{J}_\textbf{F} \bar{\textbf{w}}
\end{equation}
where $\bar{\textbf{w}}$ is the solution vector, $\textbf{D}$ denotes the diffusion coefficients matrix, $\textbf{F}$ is reaction terms vector, and  $\textbf{J}_\textbf{F}$ represents the Jacobian matrix. These can be written explicitly as, 
\begin{equation}\label{diffmatrix}
 \bar{\textbf{w}}=
\begin{bmatrix}
\bar{u}\\
\bar{v}
\end{bmatrix}, \; 
\textbf{D}=
\begin{bmatrix}
1& d_v\\
d_u & d
\end{bmatrix}, \; 
\textbf{F}(u,v)=
\begin{bmatrix}
f(u_s,v_s)\\
g(u_s,v_s)
\end{bmatrix},
\;
\textbf{J}_{\textbf{F}}=
\begin{bmatrix}
f_u(u_s,v_s)& f_v(u_s,v_s)\\
g_u(u_s,v_s)& g_v(u_s,v_s).
\end{bmatrix}.
\end{equation}
To determine the analytical results of bifurcation properties, we employ the linearization procedure and seek to find the eigenfunctions of the Laplace operator that satisfy the homogenous Neumann boundary conditions. These eigenfunctions are obtained by solving the following eigenvalue problem on an annulus domain,
\begin{equation}\label{eigen1}
     \begin{cases}
     \Delta_r w=-k^2 w, \qquad 
 \quad k\in \mathbb{R},\qquad (r,\theta) \in \Omega,
\\[10pt]
        \displaystyle{\frac{\partial w}{\partial r} \Big{|}_{r=a}={\frac{\partial w}{\partial r} \Big{|}_{r=b}=0}} \qquad \quad a,b \in \mathbb{R}_+\setminus \{0\} .
       \end{cases}
\end{equation}
where $\Delta_r $ represents the Laplace operator in polar coordinates as,

\begin{equation}\label{Laplace}
\Delta_r u(r,\theta)=\frac{1}{r}\frac{\partial}{\partial r}\left( r \frac{\partial u}{\partial r}\right) +\frac{1}{r^2}\frac{\partial^2 u}{\partial \theta^2}.
\end{equation}
The solution of the eigenvalue problem \eqref{eigen1} is obtained in the form of $w(r,\theta)=R(r)\Theta(\theta)$ under Neumann boundary conditions on an annular region. The following theorem is presented for the solution of the eigenvalue problem \eqref{eigen1}. The solution of the eigenvalue problem on flat ring with Neumann Boundary Conditions is exploited from the results in \cite{sarfaraz2018domain}, which is provided in the following Theorem. 
\begin{theorem}\label{theoeigen}
(Solution of the eigenvalue problem) Let $w(r,\theta)$ satisfies the eigenvalue problem \eqref{eigen1} under the homogeneous Neumann boundary conditions on annulus domain, with $n$ the order of Bessel's function satisfying $\mathbb{R}\setminus \dfrac{1}{2} \mathbb{Z}$. Then, for a fixed  pair $(m,n)$, $m \in\mathbb{Z}^{+}$,  there exists an infinite set of eigenfunctions of the Laplace operator which is expressed as,
\begin{equation}
w_{m,n}(r,\theta)=[R_{m,n}^1(r)+R_{m,n}^2(r)]\Theta_n(\theta)
\end{equation}
where, $R_{m,n}^1(r)$ and $R_{m,n}^2(r)$ represents the Bessel functions of first kind given as
\begin{equation}
\begin{cases}
R_{m,n}^1(r) = \sum\limits_{j=0}^{\infty} \frac{(-1)^jC_0(k_{m,n}r)^{2j+n}}{4^jj!(n+j)(n+j-1)\cdots(n+1)} \\
R_{m,n}^2(r) = \sum\limits_{j=0}^{\infty} \frac{(-1)^jC_0(k_{m,n}r)^{2j-n}}{4^jj!(-n+j)(-n+j-1)\cdots(-n+1)}
\end{cases}\label{besselsol}
\end{equation}
for $j=2m$ with
$ \Theta_n(\theta)=e^{(n\theta i)}$. Here, $C_0$ is the first coefficient of the Bessel series.
The eigenvlues to the eigenvalue problem  \eqref{eigen1}, $k_{m,n}^2$ exist as, 
\begin{equation}
k_{m,n}^2=\frac{4(a^nb+ab^n)(2m+1)(n+2m+1)(n+4m)}{ab(a^{n+1}+b^{n+1})(n+4m+2)}.
\end{equation}
\end{theorem}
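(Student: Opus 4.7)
The plan is to apply separation of variables in polar coordinates, writing $w(r,\theta)=R(r)\Theta(\theta)$ and substituting into the eigenvalue problem \eqref{eigen1}. Using the polar Laplacian \eqref{Laplace} and multiplying through by $r^{2}/(R\Theta)$, I obtain the decoupled identity
\[
\frac{r(rR')'}{R} + k^{2}r^{2} \;=\; -\frac{\Theta''}{\Theta} \;=\; n^{2},
\]
with $n^{2}$ playing the role of the separation constant.

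First I would solve the angular ODE $\Theta''+n^{2}\Theta=0$, obtaining $\Theta_n(\theta)=e^{in\theta}$. The radial equation then reduces to Bessel's equation $r^{2}R''+rR'+(k^{2}r^{2}-n^{2})R=0$. Since the annulus excludes $r=0$, Bessel functions of the first kind of orders $n$ and $-n$ are both admissible; they are linearly independent precisely when $n\notin\mathbb{Z}$, which motivates the stated restriction $n\in\mathbb{R}\setminus\frac{1}{2}\mathbb{Z}$. The power series of $J_{\pm n}(k_{m,n}r)$ about the origin are exactly the expressions $R^{1}_{m,n}$ and $R^{2}_{m,n}$ displayed in \eqref{besselsol}, so the general radial solution is the superposition $R(r)=c_{1}R^{1}_{m,n}(r)+c_{2}R^{2}_{m,n}(r)$.

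Next, I would enforce the homogeneous Neumann conditions $R'(a)=R'(b)=0$. Differentiating the series term-by-term (justified by uniform convergence on compact subsets of $\overline{\Omega}$) yields two linear equations in $(c_{1},c_{2})$, and non-triviality demands that the determinant of the resulting $2\times 2$ coefficient matrix vanish. This produces a transcendental equation in $k$ whose roots form a countably infinite family of admissible wavenumbers, indexed by the radial node count $m$ for each fixed order $n$.

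The principal obstacle will be reducing this transcendental compatibility condition to the closed-form expression asserted for $k_{m,n}^{2}$. My plan is to truncate each Bessel series at the term labelled by $j=2m$ as indicated in the hypotheses, differentiate the truncated polynomials explicitly, and substitute $r=a$ and $r=b$ into the resulting relations. Eliminating the ratio $c_{1}/c_{2}$ from the two equations yields an algebraic equation in $k^{2}$; collecting powers of $a$ and $b$ symmetrically, exploiting the $a\leftrightarrow b$ invariance of the annulus together with the complementary series structure of $R^{1}_{m,n}$ and $R^{2}_{m,n}$, I would expect to recover the factored form $k_{m,n}^{2}=4(a^{n}b+ab^{n})(2m+1)(n+2m+1)(n+4m)/[ab\,(a^{n+1}+b^{n+1})(n+4m+2)]$. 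A final consistency check is that, with the coefficient ratio fixed at $c_{1}/c_{2}=1$ by this root, the eigenfunction reduces to the claimed $w_{m,n}(r,\theta)=[R^{1}_{m,n}(r)+R^{2}_{m,n}(r)]\Theta_n(\theta)$, the common prefactor $C_{0}$ representing the free normalisation.
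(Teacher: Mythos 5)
The paper itself does not prove Theorem~\ref{theoeigen}: its ``proof'' is a one-line deferral to the external reference \cite{sarfaraz2020stability}, so there is no internal argument to compare against. Your separation-of-variables setup (angular equation giving $\Theta_n(\theta)=e^{in\theta}$, radial Bessel equation, admissibility of both orders $n$ and $-n$ because the annulus excludes the origin) is correct and is certainly the opening of any proof of this statement.

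However, the heart of the theorem --- the closed-form expression for $k_{m,n}^2$ --- is exactly the part you leave as ``I would expect to recover,'' and the route you propose for it would not get there. The exact Neumann eigenvalues of the Laplacian on an annulus are the roots of a transcendental cross-product condition obtained from the vanishing of the $2\times 2$ determinant you describe; they admit no closed form, so the displayed $k_{m,n}^2$ can only be an \emph{approximate} eigenvalue arising from a truncation of the Bessel series (this is what \cite{sarfaraz2020stability} does). Your determinant-elimination scheme produces cross terms of the type $a^{n-1}b^{n+1}-a^{n+1}b^{n-1}$, whereas the asserted formula has the symmetric-sum structure $(a^{n-1}+b^{n-1})/(a^{n+1}+b^{n+1})$, which comes from \emph{adding} the two truncated Neumann conditions at $r=a$ and $r=b$ rather than taking their resultant --- a necessary but not sufficient condition, and a materially different algebraic step. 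Two further points are asserted without justification: that the compatibility root forces $c_1/c_2=1$ (so that the eigenfunction is $R^1_{m,n}+R^2_{m,n}$ with equal weights), which does not follow from the vanishing of the determinant; and that $\Theta_n(\theta)=e^{in\theta}$ is an admissible angular factor for non-integer $n$, even though it is not $2\pi$-periodic --- a genuine peculiarity of the theorem's hypothesis $n\in\mathbb{R}\setminus\tfrac{1}{2}\mathbb{Z}$ that your proposal should at least acknowledge rather than fold into the linear-independence remark about $J_{\pm n}$.
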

\begin{proof}
The detailed proof of Theorem \ref{theoeigen} is step-by-step presented in \cite{sarfaraz2020stability}. 
\end{proof}

\textbf{Remark.}
The eigenvalues given by $k_{m,n}^2$ can be represented using $b=\rho+a$ as follows,
\begin{equation}\label{eigenvalueExpression}
k_{m,n}^2=f(\rho,n)\frac{4(2m+1)(n+2m+1)(n+4m)}{(n+4m+2)},
\end{equation}
where 
\begin{equation}
f(\rho,n)=\dfrac{a^{n-1} +(\rho+a)^{n-1}}{a^{n+1}+ (\rho+a)^{n+1}}.
\end{equation}
Before we proceed with the domain size conditions to understand the dynamics of the cross-diffusive system, the properties of $f(\rho,n)$ need to be explored at the limiting case with respect to the order of the Bessel's functions.  We also explore the properties of $f(\rho,n)$ by the variation of the inner radius $a$, and the thickness of the ring which is denoted by $\rho$. This can be accomplished through considering the upper bounds of $f(\rho,n)$ for the following two possible cases namely $n<0$ and $n>0$,
\begin{equation}\label{eigenValueSup1}
{\underset{0>n \in \mathbb{R}\setminus \frac{1}{2} \mathbb{Z} }\sup} f(\rho,n)= \lim_{n\rightarrow -\infty} f(\rho,n)= \dfrac{2}{a(\rho+a)},
\end{equation}
and 
\begin{equation}\label{eigenValueSup2}
{\underset{0<n \in \mathbb{R}\setminus \frac{1}{2} \mathbb{Z} }\sup} f(\rho,n)= \lim_{n\rightarrow 0} f(\rho,n)= \dfrac{1}{a(\rho+a)}.
\end{equation}
Therefore, the conditions are derived using the above bounds of the domain-weighting function $f(\rho,n)$ \cite{sarfaraz2020stability}. A numerical demonstration of these limiting cases is simulated with various combinations of $a$ and $\rho$ as shown in Fig. \ref{domainfactor}.
\begin{figure}[H]
\begin{tabular}{cc}
\includegraphics[width=0.45\textwidth]{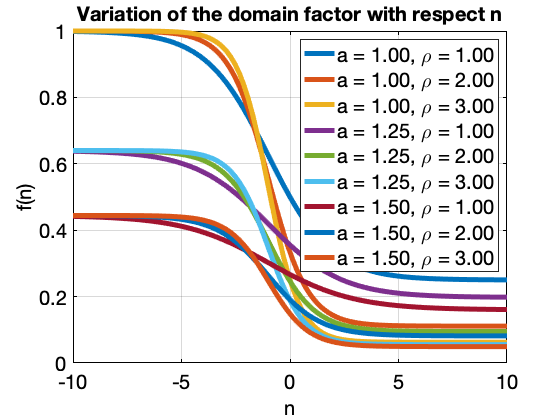} \\
{\small (a) $f(\rho,n)$ by the variation $a$ and $\rho$.}
\end{tabular}
\begin{tabular}{cc}
\includegraphics[width=0.45\textwidth]{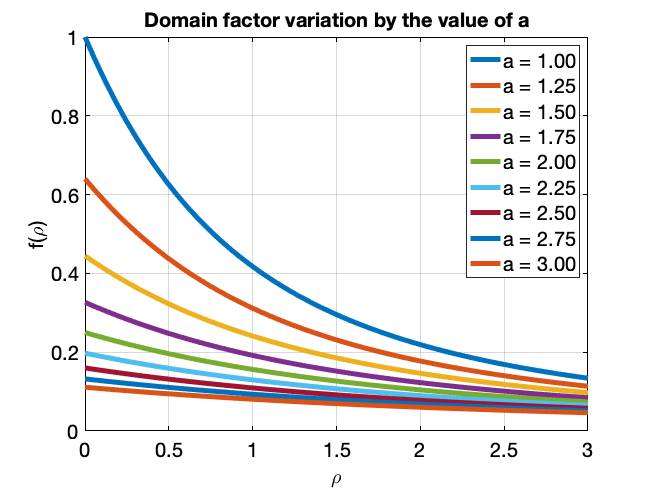} \\
{\small (b) $f(\rho,n)$ by the variation of $a$}
\end{tabular}
\caption{(a)-(b) Analysis of the domain factor on the eigenvalue of the Laplace operator demonstrated for the limiting cases with respect to the order of Bessel's function $n$, the inner radius $a$, and the thickness of the ring $\rho$.}
\label{domainfactor}
\end{figure}

The demonstration above is necessary to verify that in the limiting case when the inner radius of the ring approaches zero, the domain factor on the eigenvalues of the Laplace operator on a ring coincide to those on the disc, which is $\lim_{a \rightarrow 0}f(\rho,n)=\frac{1}{\rho^2}$. Note that this is the case since
\begin{align*}
\lim_{a \rightarrow 0}\dfrac{a^{n-1} +(\rho+a)^{n-1}}{a^{n+1}+ (\rho+a)^{n+1}} = \frac{1}{\rho^2}.
\end{align*}
The solution to system \eqref{r1} can be expressed through the method of separation of variables, where the ansatz takes the form of the product of the eigenfunctions of the Laplacian $\Delta_r$ on annulus and an exponential with respect to the time variable $t$, written as,
\begin{equation}\label{eigen2}
{{u}}(r,\theta,t)=\sum\limits_{m=0}^{\infty}\sum\limits_{n=0}^{\infty}U_{m,n}e^{-k^2_{m,n}t} R_{m,n}\Theta_n(\theta),  \; \text{and} \; 
{{v}}(r,\theta,t)=\sum\limits_{m=0}^{\infty}\sum\limits_{n=0}^{\infty}V_{m,n}e^{-k^2_{m,n}t} R_{m,n}\Theta_n(\theta),
\end{equation}
where $U_{n,m}$ and $V_{n,m}$ are the coefficients of the eigenfunctions in the eigen expansion of the series solution to \eqref{eigen1}. Substitution of \eqref{eigen2} into the vectorised representation \eqref{rr2} provides the fully linearised form of \eqref{r1},
\begin{align}\label{smat}
\begin{bmatrix}
\bar{u}_t\\
\bar{v}_t
\end{bmatrix}
=
\begin{bmatrix}
-k_{m,n}^2+\gamma f_u(u_s,v_s) &-d_vk_{m,n}^2+\gamma f_v(u_s,v_s)\\
-d_uk_{m,n}^2+\gamma g_u(u_s,v_s)&-dk_{m,n}^2+\gamma g_v(u_s,v_s)
\end{bmatrix}
\begin{bmatrix}
\bar{u}\\
\bar{v}
\end{bmatrix}.
\end{align}
The characteristic polynomial is written in terms of the trace and determinant of the stability matrix as,
\begin{equation}\label{polyn}
\lambda^2-\mathcal{T}(\alpha,\beta)\lambda+\mathcal{D}(\alpha,\beta)=0.
\end{equation}
In \eqref{polyn} above, the symbols $\mathcal{T}(\alpha,\beta)$ and $\mathcal{D}(\alpha,\beta)$ denote the trace and the determinant of the stability matrix given by  \eqref{smat}, respectively. 
Thus, the pair of the eigenvalues for the stability matrix of the system are obtained by calculating the roots of \eqref{polyn}, which are given by
\begin{equation}\label{eigen}
\lambda_{1,2}=\frac{\mathcal{T}(\alpha,\beta)\mp\sqrt{\mathcal{T}^2(\alpha,\beta)-4\mathcal{D}(\alpha,\beta)}}{2}.
\end{equation}
In the case where both eigenvalues $\lambda_{1,2}$ given in \eqref{eigen} are real, the stability of the uniform steady-state $(u_s,v_s)$ depends on the signs of the eigenvalues. For the real case of both eigenvalues $\lambda_{1,2}$, the uniform steady state is unstable if at least one of the eigenvalues $\lambda_1$ or $\lambda_2$ is positive.   On the other hand, if both eigenvalues form a complex-conjugate pair, then the stability of the uniform steady-state $(u_s,v_s)$ is determined by the sign of the real part of the eigenvalues. To predict the dynamics of the system \eqref{r1}, regarding each type of instability, we investigate the parameter regions on the top right quadrant of the cartesian plane i.e. $(\alpha,\beta) \in \mathbb{R}_+^{2}$. This is because the only permissible values for parameters $\alpha$ and $\beta$ by definition of the model are positive real values.

\subsection{Spatiotemporal pattern formation on annular region }

To determine the relationship between the model system parameters and domain-size which is represented by $\rho=b-a$ showing the thickness of the annulus, we first analyse the roots of the characteristic polynomial $\eqref{polyn}$. 
The parameter plane $(\alpha, \beta) \in \mathbb{R}^2_+$ features a partition defined by a curve. This curve serves to distinguish regions on the plane that constrain $\lambda_{1,2}$ to either a real pair or a complex conjugate pair. The implicit equation governing this curve is
\begin{equation}
\label{partitionb}
\mathcal{T}^2(\alpha,\beta) -4\mathcal{D}(\alpha,\beta)=0.
\end{equation}
Equation \eqref{partitionb} signifies that one side of the curve corresponds to the region where $\lambda_{1,2}\in\mathbb{R}$, while the other side corresponds to $\lambda_{1,2}\in\mathbb{C}\backslash\mathbb{R}$. It follows that the eigenvalues $\lambda_{1,2}$ form a complex conjugate pair if the parameters $(\alpha, \beta)\in\mathbb{R}^2_+$ satisfy the inequality
\begin{equation} \label{complexpair}
\mathcal{T}^2(\alpha,\beta) -4\mathcal{D}(\alpha,\beta)<0.
\end{equation}
The immediate requirement of \eqref{complexpair} gives the inequality $\mathcal{D}(\alpha,\beta)>0$. As mentioned above, the trace of the stability matrix \eqref{smat} does not include the cross-diffusion constants given by $d_u$ and $d_v$. Therefore, we deduce that, conditions on the domain-size in the absence of cross-diffusion remain the same as in the work \cite{sarfaraz2018domain}. This leads us to explore the positivity of $\mathcal{D}(\alpha,\beta)$ to fully understand the effect of the cross-diffusion in the reaction-diffusion system. Before we proceed to the analysis on the positivity $\mathcal{D}(\alpha,\beta)$ of the stability matrix \eqref{smat}, we recall the following theorems which depend on the analysis of trace $\mathcal{T}^2(\alpha,\beta)$ of the stability matrix \cite{sarfaraz2018domain}.

\begin{theorem}\label{theo1}
 (Condition for Hopf/Transcritical bifurcation) Let $u$ and $v$ satisfy the cross-diffusive reaction-diffusion system given by the Eq. (\ref{r1}) with real positive parameters $\alpha>0$, $\beta>0$, $d>0$, and $\gamma>0$ on a domain with the thickness of ring $\rho=b-a$  where $a$ and $b$ represent the inner and outer boundaries of domain respectively. For the system to exhibit Hopf and/or transcritical bifurcation, the domain-size controlling parameter $\rho$ must be large enough satisfying, 
\begin{equation}\label{Theo1}
\rho \geq\dfrac{8(d+1)(2m+1)(n+2m+1)(n+4m)-\gamma a^2(n+4m+2)}{\gamma a(n+4m+2)}
\end{equation}
where $n \in\mathbb{R}\setminus \dfrac{1}{2} \mathbb{Z}$ is the associated order of the Bessel's equation and $m$ is any positive integer. 
\end{theorem}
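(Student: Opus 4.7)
The plan is to translate the Hopf/transcritical bifurcation criterion, which by \eqref{polyn}--\eqref{complexpair} requires the trace $\mathcal{T}(\alpha,\beta)$ of the stability matrix \eqref{smat} to change sign in the open positive quadrant $(\alpha,\beta)\in\mathbb{R}_+^2$, into a purely geometric condition on the ring thickness $\rho$. I would proceed in three steps.

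First, I would compute $\mathcal{T}(\alpha,\beta)$ directly from \eqref{smat}. Because the cross-diffusion coefficients $d_u,d_v$ sit only on the off-diagonal of $\mathbf{D}$, they do not contribute, leaving $\mathcal{T}(\alpha,\beta) = -(1+d)k_{m,n}^2 + \gamma\bigl(f_u(u_s,v_s) + g_v(u_s,v_s)\bigr)$. Substituting the activator-depleted kinetics $f_u = -1 + 2u_sv_s$ and $g_v = -u_s^2$, evaluated at the homogeneous steady state $(u_s,v_s)=\bigl(\alpha+\beta,\beta/(\alpha+\beta)^2\bigr)$, yields
\begin{equation*}
\mathcal{T}(\alpha,\beta) \;=\; -(1+d)k_{m,n}^2 \;+\; \gamma\left[\frac{\beta-\alpha}{\alpha+\beta} \;-\; (\alpha+\beta)^2\right].
\end{equation*}

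Second, I would characterise when the partition curve $\mathcal{T}(\alpha,\beta)=0$ meets $\mathbb{R}_+^2$. A short calculation shows that the supremum of $h(\alpha,\beta):=\frac{\beta-\alpha}{\alpha+\beta}-(\alpha+\beta)^2$ over the open positive quadrant equals $1$, approached (but not attained) in the limit $\alpha,\beta\to 0^+$ with $\beta/\alpha\to\infty$. Consequently the curve is nonempty only when $(1+d)k_{m,n}^2 \leq \gamma$, which is the mode-wise constraint $k_{m,n}^2 \leq \gamma/(1+d)$. I would then combine this with the eigenvalue formula \eqref{eigenvalueExpression} together with the supremum bound $f(\rho,n)\leq 2/\bigl(a(\rho+a)\bigr)$ recorded in \eqref{eigenValueSup1}, which produces the uniform estimate
\begin{equation*}
k_{m,n}^2 \;\leq\; \frac{8(2m+1)(n+2m+1)(n+4m)}{a(\rho+a)(n+4m+2)}.
\end{equation*}

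Finally, inserting this estimate into $(1+d)k_{m,n}^2 \leq \gamma$ and isolating $\rho$ gives
\begin{equation*}
\rho \;\geq\; \frac{8(1+d)(2m+1)(n+2m+1)(n+4m)-\gamma a^2(n+4m+2)}{\gamma a(n+4m+2)},
\end{equation*}
which is precisely \eqref{Theo1}. The main obstacle I anticipate lies in the second step: the reaction-supremum analysis requires a careful treatment of the range of $h(\alpha,\beta)$ on the open quadrant (to justify $\sup h = 1$ rather than some attained interior maximum), together with consistent tracking of strict versus non-strict inequalities so that combining the supremum of $f(\rho,n)$ with the non-attained supremum of $h$ still delivers the claimed $\geq$ form rather than a strict $>$. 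Everything else reduces to algebraic manipulation of \eqref{eigenvalueExpression}.
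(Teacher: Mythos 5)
Your proposal is correct and follows essentially the same route as the paper: the paper's own proof of Theorem~\ref{theo1} merely observes that $\mathcal{T}(\alpha,\beta)$ is unaffected by $d_u$ and $d_v$ and defers the computation to \cite{sarfaraz2020stability}, and your reconstruction --- $\mathcal{T}=-(1+d)k_{m,n}^2+\gamma\left[\frac{\beta-\alpha}{\alpha+\beta}-(\alpha+\beta)^2\right]$, the unattained supremum $1$ of the kinetic part on $\mathbb{R}_+^2$, the bound $f(\rho,n)\le 2/(a(\rho+a))$ from \eqref{eigenValueSup1}, and solving for $\rho$ --- is precisely the deferred argument that the paper's Remark on the suprema of $f(\rho,n)$ sets up. The only cosmetic mismatch is the strict-versus-nonstrict inequality inherited from the unattained suprema, which you flag and which the theorem statement itself glosses over by writing $\geq$.
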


\begin{proof}
The proof of this theorem strategically focuses on examining the positivity of the trace $\mathcal{T}(\alpha,\beta)$ of the stability matrix. It is noteworthy that the trace $\mathcal{T}(\alpha,\beta)$,  remains invariant when introducing cross-diffusive coefficients to the system. Consequently, we infer that the (necessary) condition for the system to exhibit Hopf/transcritical-type bifurcation remains unaltered compared to the standard reaction-diffusion system without cross-diffusion. It must be noted that only the self-diffusion coefficient $d$ enters into the condition through the definition of the trace of the stability matrix. The proof establishing the domain size and system parameters without cross-diffusion can be found in \cite{sarfaraz2020stability}.
\end{proof}

\begin{theorem}\label{theo2}
(Turing diffusion-driven instability) Let $u$ and $v$ satisfy the cross-diffusive reaction-diffusion system given by (\ref{r1}) with real parameters $\alpha>0$, $\beta>0$, $d>0$, and $\gamma>0$ on the ring  $\Omega \subset \mathbb{R}^2$ whose thickness is given by $\rho=b-a$ where $a$ and $b$ represent the inner and outer boundaries of ring domain, respectively. If the domain-size controlling parameter $\rho$ satisfies the condition, 
\begin{equation}
\rho<\dfrac{4(d+1)(2m+1)(n+2m+1)(n+4m)-\gamma a^2(n+4m+2)}{\gamma a(n+4m+2)}
\end{equation}
then, the instability of the cross-diffusive system \eqref{r1} is restricted to Turing type only, implying that this condition forbids the temporal periodicity in the dynamics. In \eqref{theo2}, $n \in\mathbb{R}\setminus \dfrac{1}{2} \mathbb{Z}$ represent the order of Bessel's equation and $m$ is a positive integer.
\end{theorem}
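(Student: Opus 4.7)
The strategy I would follow mirrors the proof of Theorem \ref{theo1}: once again the condition stems from analysing only the trace $\mathcal{T}(\alpha,\beta)$ of the stability matrix in \eqref{smat}, since the cross-diffusion coefficients $d_u$ and $d_v$ enter solely the off-diagonal entries of $\textbf{D}$ and are therefore absent from
\begin{equation*}
\mathcal{T}(\alpha,\beta) = -(1+d)\,k_{m,n}^2 + \gamma\bigl(f_u(u_s,v_s)+g_v(u_s,v_s)\bigr).
\end{equation*}
Hence the classification of trace-driven bifurcations reduces to the self-diffusive case treated in \cite{sarfaraz2020stability}, which allows me to import directly the underlying machinery while the only new ingredient is the domain-weighting function $f(\rho,n)$ stemming from the annular geometry.

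The next step is to evaluate $f_u(u_s,v_s)+g_v(u_s,v_s) = -1 + 2\beta/(\alpha+\beta) - (\alpha+\beta)^2$ and to exploit the uniform upper bound $f_u+g_v \le 1$ on $\mathbb{R}_+^2$. With this, $\mathcal{T}(\alpha,\beta)<0$ for every $(\alpha,\beta)\in\mathbb{R}_+^2$ whenever $(1+d)k_{m,n}^2 > \gamma$, in which case the eigenvalues of \eqref{smat} cannot cross the imaginary axis via a vanishing trace. This rules out Hopf and transcritical bifurcations, so that any remaining diffusion-driven instability must arise from a sign change of the determinant $\mathcal{D}(\alpha,\beta)$, i.e.\ a Turing-type instability (which is precisely the regime in which the cross-diffusion entries $d_u,d_v$ actively participate through the off-diagonal contributions in \eqref{smat}).

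It then remains to translate $(1+d)k_{m,n}^2>\gamma$ into the stated inequality on $\rho$. Plugging the explicit form
\begin{equation*}
k_{m,n}^2 = f(\rho,n)\cdot\frac{4(2m+1)(n+2m+1)(n+4m)}{n+4m+2}
\end{equation*}
from \eqref{eigenvalueExpression} and using the $n>0$ supremum $\sup f(\rho,n) = 1/(a(\rho+a))$ from \eqref{eigenValueSup2}, a direct algebraic rearrangement yields exactly the upper bound
\begin{equation*}
\rho < \frac{4(d+1)(2m+1)(n+2m+1)(n+4m) - \gamma a^2(n+4m+2)}{\gamma a(n+4m+2)}.
\end{equation*}
The appearance of the factor $4$ (rather than the $8$ of Theorem \ref{theo1}) mirrors the relation $\sup_{n>0} f = \tfrac{1}{2}\sup_{n<0} f$ between the two supremum bounds identified in \eqref{eigenValueSup1}--\eqref{eigenValueSup2}.

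The delicate point I expect to be the main obstacle is the direction of the inequality when passing from a bound on $f(\rho,n)$ to a bound on $k_{m,n}^2$: the supremum $\sup_{n>0} f$ controls $k_{m,n}^2$ from above, not from below, so one must justify why the saturating mode $n\to 0^+$ is indeed the correct witness for ruling out a vanishing trace. I expect this subtlety to be resolved through the continuity and monotonicity of $f(\rho,n)$ in $n$ used to establish \eqref{eigenValueSup1}--\eqref{eigenValueSup2}, by arguing that the unique critical mode which would permit $\mathcal{T}=0$ in the relaxed regime of Theorem \ref{theo1} is precisely the one excluded in the tightened regime of Theorem \ref{theo2}. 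Finally, as a consistency check, I would verify that the derived upper bound on $\rho$ is still compatible with a sign change of $\mathcal{D}(\alpha,\beta)$ driven by $d_u$ and $d_v$, so that Turing-type instability is genuinely possible (and not eliminated) under the hypothesis of the theorem.
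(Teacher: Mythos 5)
Your proposal takes essentially the same route as the paper: the paper's own proof of Theorem \ref{theo2} rests entirely on the observation that $\mathcal{T}(\alpha,\beta)$ is independent of the cross-diffusion coefficients $d_u$ and $d_v$, so the bound on $\rho$ is inherited unchanged from the self-diffusive case, for which it simply defers to \cite{sarfaraz2020stability}. Your explicit reconstruction --- bounding $f_u+g_v$ above by $1$, requiring $(1+d)k_{m,n}^2>\gamma$ to force $\mathcal{T}<0$, and substituting the supremum $1/(a(\rho+a))$ of the domain factor to obtain the stated inequality with the factor $4$ --- is consistent with that argument and with the analogous supremum-substitution step the paper carries out explicitly in the proof of Theorem \ref{Maincond}; the subtlety you flag about the direction of the supremum bound is real but is likewise left unaddressed by the paper, which resolves it only by citation.
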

\begin{proof}
The proof of this theorem involves examining the real part of the eigenvalues $\lambda_{1,2}$ when the discriminant $\mathcal{T}^2-4\mathcal{D}$ of the characteristic polynomial is negative. Similar to the scenario in Theorem \eqref{theo1}, it is important to highlight that the trace $\mathcal{T}(\alpha,\beta)$ of the stability matrix remains unaffected by the values of $d_u$ and $d_v$. Consequently, we conclude that the conditions outlined in Theorem \ref{theo2} persist unchanged in the absence of cross-diffusion, as discussed and proven in \cite{sarfaraz2020stability}. Hence, we note again that only the self-diffusion coefficient $d$ enters into the condition through definition of the trace of the stability matrix.
\end{proof}
\noindent \textbf{Remark.} It is important to note that Theorem \ref{theo1} represents the necessary condition for the system to exhibit Hopf/transcritical type of bifurcations, on the other hand it allows for the possibility that the system may undergo Turing-type instability. However, Theorem \ref{theo2} precludes temporal periodicity, permitting only spatial pattern formation. Both Theorems \ref{theo1} and \ref{theo2} are independent of  cross-diffusion terms since the strategy used in these theorems is constructed based on the exploitation of the sign of $\mathcal{T}(\alpha,\beta)$, which is independent of cross-diffusion constants of System \eqref{r1}. Conditions in Theorems \ref{theo1} and  \ref{theo2} remain valid in the form of necessary requirements in the case of cross-diffusive systems. 

Our analysis extends to establishing the sufficient conditions regarding the thickness of the ring $\rho$ in relation to the parameters of the reaction-cross-diffusive system, namely $d$, $d_u$, $d_v$, and $\gamma$. This is achieved by examining the positivity of the determinant $\mathcal{D}(\alpha,\beta)$ of the stability matrix. The process involves expanding the determinant of the stability matrix \eqref{eigen2} and rearranging $\mathcal{D}(\alpha,\beta)$ into the form of an implicit cubic polynomial in $\beta$. Each term in the polynomial shares a common factor of $\frac{1}{\alpha+\beta}$ with all other parameters consolidated as coefficients. This expression reads as
\begin{equation}\label{dnew}
\mathcal{D(\alpha,\beta)}=p_0 + p_1 \beta + p_2 \beta^2 + p_3 \beta^3, 
 \end{equation}
 where 
 \begin{align}
p_0 = \frac{1}{\alpha+\beta}\kappa_0(\alpha), ~~
p_1  = \frac{1}{\alpha+\beta} \kappa_1(\alpha),  ~~
p_2   = \frac{1}{\alpha+\beta}\kappa_2(\alpha),  ~~
p_3  = \frac{1}{\alpha+\beta}\kappa_3(\alpha).\notag 
 \end{align}
 Here, the terms $\kappa_i$'s  ($i=0,1,2, 3$) are expressed in terms of all the remaining parameters, which are given by 
\begin{eqnarray} 
 &&\kappa_0(\alpha)=\alpha^3\gamma^2 + \alpha^3\gamma k_{m,n}^2 + \alpha dk_{m,n}^4 - \alpha d_u d_vk_{m,n}^4+\alpha d \gamma k_{m,n}^2 +\alpha^3d_u\gamma k_{m,n}^2,   \nonumber \\
&&\kappa_1(\alpha)=dk^4 - d_ud_vk_{m,n}^4- d\gamma k_{m,n}^2 + 3\alpha^2\gamma^2 + 3\alpha^2\gamma k_{m,n}^2 +3\alpha^2d_u\gamma k_{m,n}^2 - 2d_v\gamma k_{m,n}^2, \nonumber \\
&&\kappa_2(\alpha)= 3\alpha\gamma( k_{m,n}^2(d_u+1)+ \gamma), \nonumber \\
&&\kappa_3(\alpha)=\gamma k_{m,n}^2 + \gamma^2 + d_u\gamma k_{m,n}^2 =\gamma( k_{m,n}^2(d_u+1)+ \gamma). \nonumber
\end{eqnarray}
Note that $(\alpha,\beta)\in\mathbb{R}_+^2$ are by definition non-zero positive constants, therefore, we assert that $\mathcal{D}(\alpha,\beta)>0$ requires the positivity of the cubic polynomial in $\beta$ given in \eqref{dnew}. We proceed by writing the polynomial such that the coefficient of $\beta^3$ is one, which means we have 
\begin{equation}\label{standardCubic}
\beta^3+\dfrac{\kappa_2(\alpha)}{\kappa_3(\alpha)}\beta^2+\dfrac{\kappa_1(\alpha)}{\kappa_3(\alpha)}\beta+\dfrac{\kappa_0(\alpha)}{\kappa_3(\alpha)}>0.
 \end{equation}

Note that the domain-size controlling parameter $\rho$ representing the thickness of the flat-ring is implicitly embedded within the expressions of the coefficients of the cubic polynomial described by \eqref{standardCubic}. This is through the fact that the coefficients of \eqref{standardCubic} directly depend on $k_{m,n}^2$, which represent the eigenvalues of the diffusion operator, and $\rho$ is a parameter of the expression for $k_{m,n}^2$ as shown in \eqref{eigenvalueExpression}.  We consider the following theorem from \cite{qi2020positivity} to obtain the conditions required for \eqref{standardCubic} to be positive in terms of the thickness of ring $\rho$ along with $d$, $d_u$, $d_v$, $\gamma$.

%

\begin{proposition}\label{prop1}
Suppose that $\mathcal {D}(\beta)=\beta^3+a\beta^2+b\beta+c$ be a non-degenerate cubic polynomial. Let the cubic polynomial $\mathcal{D}(\beta)$ defined as follows, 
\begin{equation}
\mathcal{D}(\beta)=h(\beta)\beta+c
\end{equation}
where $h(\beta)=\beta^2+a\beta+b$, with $c>0$ and $\beta\geq 0$.  For the cubic polynomial $\mathcal{D}(\beta)$ to be strictly positive, the quadratic polynomial $h(\beta)$ must satisfy,$a\geq 0$, $b\geq 0$, or  $b>0$, $4b\geq a^2$.
\end{proposition}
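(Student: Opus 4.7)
The plan is to reduce the claim to a statement about the auxiliary quadratic $h(\beta)=\beta^2+a\beta+b$ on the half-line $\beta\ge 0$, and then verify nonnegativity of $h$ under each of the two sets of hypotheses. The key observation is that the decomposition $\mathcal{D}(\beta)=\beta\,h(\beta)+c$ together with $\beta\ge 0$ and $c>0$ gives
\begin{equation*}
\mathcal{D}(\beta)\;\ge\;c\;>\;0\quad\text{whenever}\quad h(\beta)\ge 0\text{ on }[0,\infty).
\end{equation*}
So I would first state this reduction: it suffices to establish $h(\beta)\ge 0$ for all $\beta\ge 0$ under either condition (i) $a\ge 0$ and $b\ge 0$, or (ii) $b>0$ and $4b\ge a^2$; strict positivity of $\mathcal{D}$ is then automatic because $c$ is strictly positive.

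Next I would dispose of case (i) by a direct term-by-term argument: for $\beta\ge 0$ each of $\beta^2$, $a\beta$, and $b$ is nonnegative under the stated sign conditions, hence so is their sum $h(\beta)$. For case (ii), I would invoke the discriminant test: the roots of $h$ are $\beta_{\pm}=\tfrac{-a\pm\sqrt{a^{2}-4b}}{2}$, and $4b\ge a^{2}$ forces the discriminant $a^{2}-4b$ to be nonpositive, so $h$ has no real roots (or a single repeated real root). Since the leading coefficient of $h$ is $1>0$, this yields $h(\beta)\ge 0$ for every $\beta\in\mathbb{R}$, and in particular on $[0,\infty)$. Combining either case with the reduction above delivers $\mathcal{D}(\beta)>0$.

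I would finish by noting that the non-degeneracy hypothesis on $\mathcal{D}$ plays no role beyond ensuring that $\mathcal{D}$ is genuinely cubic (so that the decomposition $\beta h(\beta)+c$ is meaningful with $h$ a true quadratic), and that the cases (i) and (ii) are not mutually exclusive: when both hold, the two arguments give the same conclusion. No substantial obstacle is anticipated, since the proof is essentially a pair of elementary sign checks on a quadratic. The only subtlety worth flagging explicitly is the passage from $h\ge 0$ to $\mathcal{D}>0$, which relies crucially on the strict inequality $c>0$ rather than on any strict positivity of $h$ itself, and this is what makes the hypotheses allow the weak inequalities $a\ge 0$, $b\ge 0$, and $4b\ge a^{2}$ (rather than requiring strict versions).
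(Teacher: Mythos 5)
Your argument is correct and complete, but it is worth noting that the paper itself does not prove this proposition at all: its ``proof'' is a one-line citation to the references of Qi and Schmidt on positivity of cubic polynomials. What you have supplied is therefore a genuine, self-contained elementary replacement: the reduction $\mathcal{D}(\beta)=\beta h(\beta)+c\ge c>0$ whenever $h\ge 0$ on $[0,\infty)$, followed by the term-by-term sign check for $a\ge 0,\ b\ge 0$ and the discriminant test for $b>0,\ 4b\ge a^2$, is exactly the right two-case analysis and each step is sound. One point of interpretation you handled correctly but should perhaps make explicit: the proposition's phrasing (``must satisfy'') reads as a necessity claim, but the conditions are only \emph{sufficient} for positivity of $\mathcal{D}$ on $[0,\infty)$ (a large $c$ can compensate for $h$ dipping negative, e.g.\ $a=-10$, $b=1$, $c=10^6$), and sufficiency is all that Theorem 4 of the paper actually uses; your proof establishes precisely that direction. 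Your closing remarks on the role of non-degeneracy and on the overlap of the two cases are accurate, and the observation that strictness comes from $c>0$ rather than from $h$ is the right subtlety to flag.
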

\begin{proof}
The proof of this proposition is  presented in \cite{qi2020positivity,schmidt1988positivity}.
\end{proof}
The conditions on $a$, $b$, and $c$ in Proposition \ref{prop1} are represented in terms of the reaction-diffusion system parameters as follows,

 \begin{equation} \label{trivial}
a=\dfrac{\kappa_2(\alpha)}{\kappa_3(\alpha)}=\dfrac{3\alpha\gamma^2 + 3\alpha\gamma k_{m,n}^2 + 3\alpha d_u\gamma k_{m,n}^2}{\gamma k_{m,n}^2 + \gamma^2 + d_u\gamma k_{m,n}^2 }  \geq  0,
 \end{equation}
\begin{eqnarray}\label{ntrivial2}
b=&\dfrac{\kappa_1(\alpha)}{\kappa_3(\alpha)}=\dfrac{(d - d_ud_v)k_{m,n}^4- (d\gamma  - 3\alpha^2\gamma -3\alpha^2d_u\gamma  - 2d_v\gamma) k_{m,n}^2}{\gamma k_{m,n}^2 + \gamma^2 + d_u\gamma k_{m,n}^2 } \\ \nonumber
&\hspace{2cm}+\dfrac{ 3\alpha^2\gamma^2}{\gamma k_{m,n}^2 + \gamma^2 + d_u\gamma k_{m,n}^2 }  \geq 0 ,
 \end{eqnarray}
and
\begin{eqnarray}\label{ntrivial1}
c=\dfrac{\kappa_0(\alpha)}{\kappa_3(\alpha)}=\dfrac{(d-d_ud_v)\alpha k_{m,n}^4 + (\alpha^3\gamma   + \alpha d \gamma  +\alpha^3d_u\gamma) k_{m,n}^2+\alpha^3\gamma^2}{\gamma k_{m,n}^2 + \gamma^2 + d_u\gamma k_{m,n}^2 }  > 0.
 \end{eqnarray}
With this setup in mind, we state Theorem 4 and provide its proof subsequently.
\begin{theorem}\label{Maincond}
 (Condition on $\rho$ for spatiotemporal pattern formation of cross-diffusive reaction-diffusion system) Let $u$ and $v$ satisfy System \eqref{r1} with cross-diffusive constants $d_u$ and $d_v$ as presented therein with real positive parameters $\alpha>0$, $\beta>0$, $d>0$, $\gamma>0$ on an annulus $\Omega$ with thickness $\rho$ and the inner radius $a$. For the thickness of the annulus denoted by $\rho$ satisfying
\begin{equation}\label{Theo4}
\rho> \dfrac{8(d-d_ud_v) (2m+1)(n+2m+1)(n+4m)-\gamma a^2(7d+8d_v)(n+4m+2)}{(7d+8d_v)(n+4m+2)\gamma},
\end{equation}
is a sufficient condition for the parameter space $(\alpha,\beta)\in\mathbb{R}_+^2$ to admit values for which System \eqref{r1} will undergo spatiotemporal dynamics i.e. (Hopf and/or transcritical bifurcation) pattern formation.
\end{theorem}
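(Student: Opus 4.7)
The plan is to derive the sufficient condition on $\rho$ by enforcing $\mathcal{D}(\alpha,\beta)>0$, which is a prerequisite for the complex-pair condition $\mathcal{T}^2(\alpha,\beta) - 4\mathcal{D}(\alpha,\beta) < 0$ that, when combined with the trace condition of Theorem \ref{theo1}, yields oscillatory spatiotemporal dynamics. Since $\mathcal{D}(\alpha,\beta)$ has already been rewritten in (\ref{dnew}) as a cubic polynomial in $\beta$ with coefficients $p_i$ depending on $\alpha, \gamma, d, d_u, d_v$ and on the spectral parameter $k^2_{m,n}$, the strategy is to invoke Proposition \ref{prop1} applied to this cubic.

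I would first verify that the Proposition \ref{prop1} coefficient $\kappa_2(\alpha)/\kappa_3(\alpha)$ is non-negative by direct inspection of (\ref{trivial}): every summand in both $\kappa_2(\alpha)$ and $\kappa_3(\alpha)$ is non-negative under the standing positivity assumptions. Next, I would show that $\kappa_0(\alpha)/\kappa_3(\alpha)>0$ follows from the well-posedness condition $d - d_u d_v > 0$ together with positivity of $\alpha, d, \gamma, d_u$, since the leading $k^4_{m,n}$-contribution to $\kappa_0(\alpha)$ is $\alpha(d - d_u d_v) k^4_{m,n} > 0$ and every remaining summand in (\ref{ntrivial1}) is manifestly positive. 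With these two coefficients controlled, Proposition \ref{prop1} reduces the entire problem to securing $\kappa_1(\alpha)/\kappa_3(\alpha) \geq 0$.

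The core of the argument is then a quantitative analysis of (\ref{ntrivial2}): treating its numerator as a quadratic in $k^2_{m,n}$, I would isolate the manifestly non-negative contributions $(d - d_u d_v) k^4_{m,n}$ and $3\alpha^2\gamma^2$ from the sign-indefinite linear piece $-\gamma(d - 2d_v - 3\alpha^2(1+d_u)) k^2_{m,n}$ and bound the latter using the explicit spectral bound
\[
k^2_{m,n} \leq \frac{8(2m+1)(n+2m+1)(n+4m)}{a(\rho+a)(n+4m+2)},
\]
which follows from the supremum $f(\rho,n)\leq 2/[a(\rho+a)]$ established in (\ref{eigenValueSup1}). Clearing denominators and solving for $\rho$ in the resulting inequality should produce the closed-form threshold (\ref{Theo4}), at which point the existence of admissible $(\alpha,\beta)\in\mathbb{R}_+^2$ follows from the non-emptiness of the resulting region in the parameter plane.

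The step I expect to be the principal obstacle is this final piece of algebraic bookkeeping: one must keep $\alpha$ available as a free parameter so that the \emph{``admits values''} assertion of the theorem is non-vacuous, yet the resulting $\rho$-threshold must be independent of $\alpha$. The specific combination $(7d+8d_v)$ appearing in (\ref{Theo4}) is not a generic worst-case value, which suggests that the linear-in-$k^2_{m,n}$ piece is majorised against the quartic piece via a carefully chosen weighted splitting of $(d - d_u d_v) k^4_{m,n}$ together with a controlled discarding of the $3\alpha^2\gamma^2$ term; pinning down that weighting is the delicate point, after which the conclusion follows by combining the resulting $\mathcal{D}>0$ region with Theorem \ref{theo1} to secure $\mathrm{Re}(\lambda_{1,2})>0$ together with a nonzero imaginary part.
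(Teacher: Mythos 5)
Your scaffolding matches the paper's: reduce $\mathcal{D}(\alpha,\beta)>0$ to the positivity of the cubic \eqref{standardCubic} via Proposition \ref{prop1}, dispose of $\kappa_2(\alpha)/\kappa_3(\alpha)\geq 0$ trivially, and at the very end insert the supremum \eqref{eigenValueSup1} for $k^2_{m,n}$ and invert for $\rho$. The gap is precisely the step you flag as ``the delicate point,'' and the resolution you sketch would not close it. You take the first branch of Proposition \ref{prop1} ($a\geq 0$, $b\geq 0$), declare $c>0$ automatic, and hope to extract the threshold from $\kappa_1(\alpha)/\kappa_3(\alpha)\geq 0$ alone by majorising its linear-in-$k^2_{m,n}$ piece. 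Writing $b=3\alpha^2+X$ with $X=\bigl[(d-d_ud_v)k_{m,n}^4-(d+2d_v)\gamma k_{m,n}^2\bigr]/\bigl[(1+d_u)\gamma k_{m,n}^2+\gamma^2\bigr]$ independent of $\alpha$, the condition $b\geq 0$ is satisfiable for \emph{every} $k^2_{m,n}$ simply by taking $\alpha$ large, so it yields no constraint on $\rho$ whatsoever; if instead you demand it uniformly as $\alpha\to 0$ you get $k^2_{m,n}\geq (d+2d_v)\gamma/(d-d_ud_v)$, a \emph{lower} bound on $k^2_{m,n}$, hence an \emph{upper} bound on $\rho$ with coefficient $(d+2d_v)$ rather than $(7d+8d_v)$. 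No weighted splitting of the quartic term against the linear term, nor any discarding of $3\alpha^2\gamma^2$, can manufacture $(7d+8d_v)$ out of this single inequality. (There is also a sign slip: the linear coefficient is $-(d+2d_v)\gamma+3\alpha^2(1+d_u)\gamma$, not $-(d-2d_v-3\alpha^2(1+d_u))\gamma$.)

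The missing idea is that the paper works with the \emph{second} branch of Proposition \ref{prop1}, $4b\geq a^2$, coupled with $c>0$, and eliminates $\alpha$ between the two. Since $a=\kappa_2/\kappa_3=3\alpha$, the condition $4b\geq a^2$ reads $4X\geq -3\alpha^2$, while $c>0$ reads $3Y>-3\alpha^2$ with $Y=\bigl[(d-d_ud_v)k_{m,n}^4+d\gamma k_{m,n}^2\bigr]/\bigl[(1+d_u)\gamma k_{m,n}^2+\gamma^2\bigr]$; these are exactly the two inequalities \eqref{newtrivial1}. Setting the first to equality, $-3\alpha^2=4X$, and substituting into the second gives the $\alpha$-free inequality $3Y>4X$ of \eqref{combinednew}; for $d_u>-1$ (so the common denominator is positive, the case split you omit) this reduces to $3d\gamma+4(d+2d_v)\gamma>(d-d_ud_v)k^2_{m,n}$, which is where $7d+8d_v=3d+4(d+2d_v)$ originates. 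Only after this elimination is the spectral supremum substituted and the inequality solved for $\rho$. So the threshold is generated by playing the $4b\geq a^2$ and $c>0$ coefficient conditions off against each other to remove $\alpha$, not by a refined bound on a single coefficient of the cubic.
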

\begin{proof}
The strategy of this proof involves analysing conditions on $a$, $b$, and $c$ as provided in \eqref{trivial}-\eqref{ntrivial1}. Direct algebraic operations on \eqref{trivial} leads to a trivial requirement of the form  $3\alpha \geq 0$, which allows to assert that \eqref{trivial} holds true. Conditions \eqref{ntrivial2} and \eqref{ntrivial1} in Proposition \ref{prop1}, which together ensure the positivity of $\mathcal{D}(\beta)$, are investigated and a set of two inequalities of the form 
\begin{equation}\label{newtrivial1}
    \begin{cases}
3\alpha^2+\dfrac{(d-d_u d_v)k_{m,n}^4-(d+2d_v)\gamma k_{m,n}^2}{(1+d_u)\gamma k_{m,n}^2+\gamma^2} \geq \dfrac{9\alpha^2}{4},
\\[10pt]
\alpha^2+\dfrac{(d-d_ud_v)k_{m,n}^4+d\gamma k_{m,n}^2}{(1+d_u)\gamma k_{m,n}^2+\gamma^2} > 0, 
    \end{cases}
\end{equation}
is deduced.
The inequalities provided by $\eqref{newtrivial1}$ are identical to those derived for a convex domain investigated in \cite{yigit2024domain}. This entails a universality in obtaining sufficient conditions for spatiotemporal dynamics in reaction-diffusion systems across different geometries regardless of the convexity condition for the domain. However, the domain-dependent distinction emerges when the explicit expression for the eigenvalues of the Laplace operator corresponding to the geometry and boundary conditions is substituted and the inequalities are simultaneously solved for the domain-size controlling parameter, which in this case is $\rho$.
Note that inequalities in \eqref{newtrivial1} can be re-arranged to be written in the form
\begin{equation}\label{sysIneqnew}
    \begin{cases}
     \dfrac{4(d-d_ud_v)k_{m,n}^4-4(d+2d_v)\gamma k_{m,n}^2}{(1+d_u)\gamma k_{m,n}^2+\gamma^2}= -3\alpha^2
\\[10pt]
       \dfrac{3(d-d_ud_v)k_{m,n}^4+3d\gamma k_{m,n}^2}{(1+d_u)\gamma k_{m,n}^2+\gamma^2} > -3\alpha^2 . 
   \end{cases}
\end{equation}
Without loss of generality,  in \eqref{sysIneqnew}, we substitute for $-3\alpha^2$ in the second inequality to obtain
\begin{align}
\dfrac{3(d-d_ud_v)k_{m,n}^4+3d\gamma k_{m,n}^2}{(1+d_u)\gamma k_{m,n}^2 +\gamma^2} > \dfrac{4(d-d_ud_v)k_{m,n}^4-4(d+2d_v)\gamma k_{m,n}^2 }{(1+d_u)\gamma k_{m,n}^2+\gamma^2}. \label{combinednew}
\end{align}
Solving \eqref{combinednew} for $k_{m,n}^2$ leads to the desired condition on the domain-size controlling parameter $\rho$ that will ensure the positivity of the cubic polynomial \eqref{standardCubic} and in turn that of the determinant of the stability matrix given by \eqref{dnew}. 

To derive the desired condition, the sign of the denominator of both sides of \eqref{combinednew} is exploited, which requires the analysis to independently investigate the  case when the denominator on both sides of \eqref{combinednew} is either positive or negative. 
Such a requirement enforces two independent cases to explore, namely  $d_u>-1$ or $d_u<-1$, subject to $d-d_u d_v >0$.  Hence, the current analysis admits the concept of cross-diffusion for one of the species to be either negative or positive. 
However, the determinant of the diffusion matrix must be positive to guarantee the regularity of System \eqref{r1}. We exploit the experimental investigation presented in \cite{vanag2009cross} and results therein, where negative and positive cross-diffusion give rise to Turing type behaviour in the dynamics. Such experimental findings create the platform to explore the cross-diffusive parameter $d_u$ across the full spectrum of the real line in particular, both for the choice of negative and positive real values. Therefore, exploiting such observations we consider $d_u>-1$, which corresponds to the positivity of the denominator of both sides of \eqref{combinednew}. As a result, we write
\begin{equation}\label{combinednew1}
3(d-d_ud_v)k_{n,m}^2+3d\gamma >4(d-d_ud_v)k_{n,m}^2-4(d+2d_v)\gamma
\end{equation}
or equivalently
\begin{equation}\label{combinednew2}
3d\gamma +4(d+2d_v)\gamma >(d-d_ud_v)k_{n,m}^2.
\end{equation}
Inequality \eqref{combinednew2} is re-written by isolating $k_{m,n}^2$, resulting in
\begin{equation}\label{combinednew3}
k_{n,m}^2< \dfrac{(7d+8d_v)\gamma}{d-d_u d_v}.
\end{equation}

To establish \eqref{combinednew2}, we substitute $k_{m,n}^2$ with the maximum of the two suprema, accounting for the limiting cases of the domain-size factor $f(\rho)$ as outlined in \eqref{eigenValueSup1}:
\begin{equation}\label{combinednew4}
\dfrac{8(2m+1)(n+2m+1)(n+4m)}{a(\rho+a)(n+4m+2)}< \dfrac{(7d+8d_v)\gamma}{d-d_u d_v}.
\end{equation}
The proposed sufficient condition of Theorem \ref{Maincond} is attained by solving \eqref{combinednew4} for $\rho$, which leads to
\begin{equation}
\rho> \dfrac{8(d-d_ud_v) (2m+1)(n+2m+1)(n+4m)-\gamma a^2(7d+8d_v)(n+4m+2)}{(7d+8d_v)(n+4m+2)\gamma},
\end{equation}
thereby completing the proof.
\end{proof}

\section{Geometric comparison of cross-diffusion induced parameter spaces for convex and non-convex domains}\label{sec:ParameterSpaces}
In the analysis of the parameter spaces indicative of Turing instability, Hopf bifurcation, and Transcritical curves within the $(\alpha, \beta)\in\mathbb{R}_+^2$ bifurcation plane, emphasis is placed on the imperative requirement on the partitioning curve that classifies the bifurcation plane into real and complex roots, which must satisfy the implicit equation given by $\mathcal{T}^2(\alpha,\beta) -4\mathcal{D}(\alpha,\beta)=0$. A numerical verification procedure was initially undertaken to ascertain the stable and unstable regions via analysis of the real part of eigenvalues, conducted across distinct two-dimensional geometries encompassing rectangular, circular, and annular domains, i.e. convex and non-convex geometries. Fig. \ref{fullclass1} presents a comprehensive classification of these stable and unstable regions, illustrated according to conditions outlined in Table \ref{Tab1}. Notably, regions characterized by positive real parts of the eigenvalues, regardless of their real or complex nature, denote instability of the solution of System \eqref{r1}, which in turn makes the steady state $(u_s,v_s)$ asymptotically unstable. Conversely, regions featuring negative real parts of the eigenvalues represent dynamic stability of the uniform steady state of System \eqref{r1}. Subsequently, parameter regions depicted in Fig. \ref{fullclass1} are color-coded to signify distinct characteristics.  The colour magenta represents regions corresponding to real distinct eigenvalues, green colour regions with complex eigenvalues and negative real parts, red colour regions with complex eigenvalues and positive real parts, and blue colour regions with real eigenvalues and at least one positive root. Both magenta and green regions denote stability, while the observation of a Hopf bifurcation in system dynamics occurs within the red region. Additionally, the emergence of Turing-type patterns is observed by selecting parameters from within the blue regions. Further analysis conducted with varying domain sizes reveal a variety of shifts in unstable spaces across different geometries, elucidating the intricate interplay between geometry and domain size parameters. Specifically, while the unstable spaces remain intact in terms of area, however, the numerical procedure demonstrates that Turing spaces diminish with increasing domain size, where the area of unstable regions is conserved by expanding Hopf bifurcation regions to conserve the total area corresponding to diffusion-driven instability. Hopf bifurcation regions display variability contingent upon the geometry under consideration. This comprehensive analysis sheds light and thereby adds to the novelty of formalizing a universal understanding of the influences of geometry and domain size on system dynamics within distinct parameter spaces.

The unstable parameter spaces depicted in Figs. \ref{HopfTransParameter}, \ref{TuringTheo1}, and \ref{TuringTheo2} are explored through numerical representation, with a focus on the variations of system parameters $d$, $d_u$, $d_v$, and $\gamma$ within the constraints presented in Section \ref{sec:linearstability}, specified for an annular domain. Fig. \ref{HopfTransParameter} illustrates Hopf bifurcation regions alongside transcritical curves, adhering to the conditions stated in Theorems \ref{theo1} and \ref{Maincond}. It is recognized from the detailed analysis presented in Figs. \ref{HopfTransParameter} (a)-(b) that augmenting the self-diffusion coefficient $d$ while maintaining positive and negative cross-diffusion coefficients $d_u$ induces a reduction in the Hopf bifurcation regions, coupled with a shift of transcritical curves from $t_4$ to $t_1$. An important point of verification associated with these findings is that we observe that including cross-diffusion terms compels System \eqref{r1} to manifest unstable spaces in the plane $(\alpha,\beta)\in\mathbb{R}_+^2$ for $d=1$. This entails that the proven case for reaction-diffusion systems of self-diffusive species in the existing literature precluding the manifestation of unstable spaces for $d=1$ is limited to the absence of cross-diffusion \cite{madzvamuse2015cross}, and is violated when cross-diffusion is added to the system. Furthermore, Figure \ref{HopfTransParameter} (c) illustrates a diminishing trend in Hopf bifurcation regions with an increase in positive $d_u$, whereas transcritical curves remain invariant. Figures \ref{HopfTransParameter} (c)-(f) collectively illustrate that elevating cross-diffusion parameters $d_u$ and $d_v$ while holding $d$ constant gives rise to progressively smaller Hopf bifurcation regions, while limit cycle curves remain unchanged across each case.  Fig. \ref{TuringTheo1} shows the regions where eigenvalues are real with at least one positive value corresponding to the Turing spaces in light of Theorems \ref{theo1} and \ref{Maincond}. From the parameters chosen according to Fig. \ref{TuringTheo1}, we expect the system dynamics to exhibit exclusively spatial patterns. Fig. \ref{TuringTheo1} (a) shows that an increase in the self-diffusion coefficient $d$ results in the enlargement of the Turing regions. In Fig. \ref{TuringTheo1} (b), we observe that increasing $d$ with the negative cross-diffusion coefficient $d_u$, entails the expansion of the Turing regions. It is important to note that, the main difference between Fig. \ref{TuringTheo1} (a) and Fig. \ref{TuringTheo1} (b) is the choice of cross-diffusion coefficient $d_u$ being positive or negative. Figs. \ref{TuringTheo1} (a)-(b)-(c) are generated by fixing $d$ and $d_v$ and considering $d_u$ being positive, zero and negative respectively. Fig. \ref{TuringTheo1} (f) shows the effect of $d_v$ by fixing $d$ and $d_u$. We observe that an increase in self- and cross-diffusion coefficients has a positive effect on the enlargement of Turing spaces. Fig. \ref{TuringTheo2} presents the Turing spaces conditional to Theorems \ref{theo2} and \ref{Maincond}. It is essential to note that,  Fig. \ref{TuringTheo2} considers the fact that the condition of Theorem \ref{theo2} forbids the existence of Hopf and transcritical type of bifurcations, allowing only spatial patterns in the system dynamics. Fig. \ref{TuringTheo2} is generated considering that the eigenvalues are real with at least one of them being positive. Fig. \ref{TuringTheo2} (a) presents the effect of self-diffusion coefficient $d$, and Fig. \ref{TuringTheo2} (b) shows the effect of cross-diffusion coefficient $d_v$. We observe that, unlike the Hopf bifurcation regions, an increase in self-diffusion and cross-diffusion coefficients results in an enlargement of the Turing spaces. 
\begin{figure}[H]
\centering
\begin{tabular}{cc}
\includegraphics[width=0.28\textwidth]{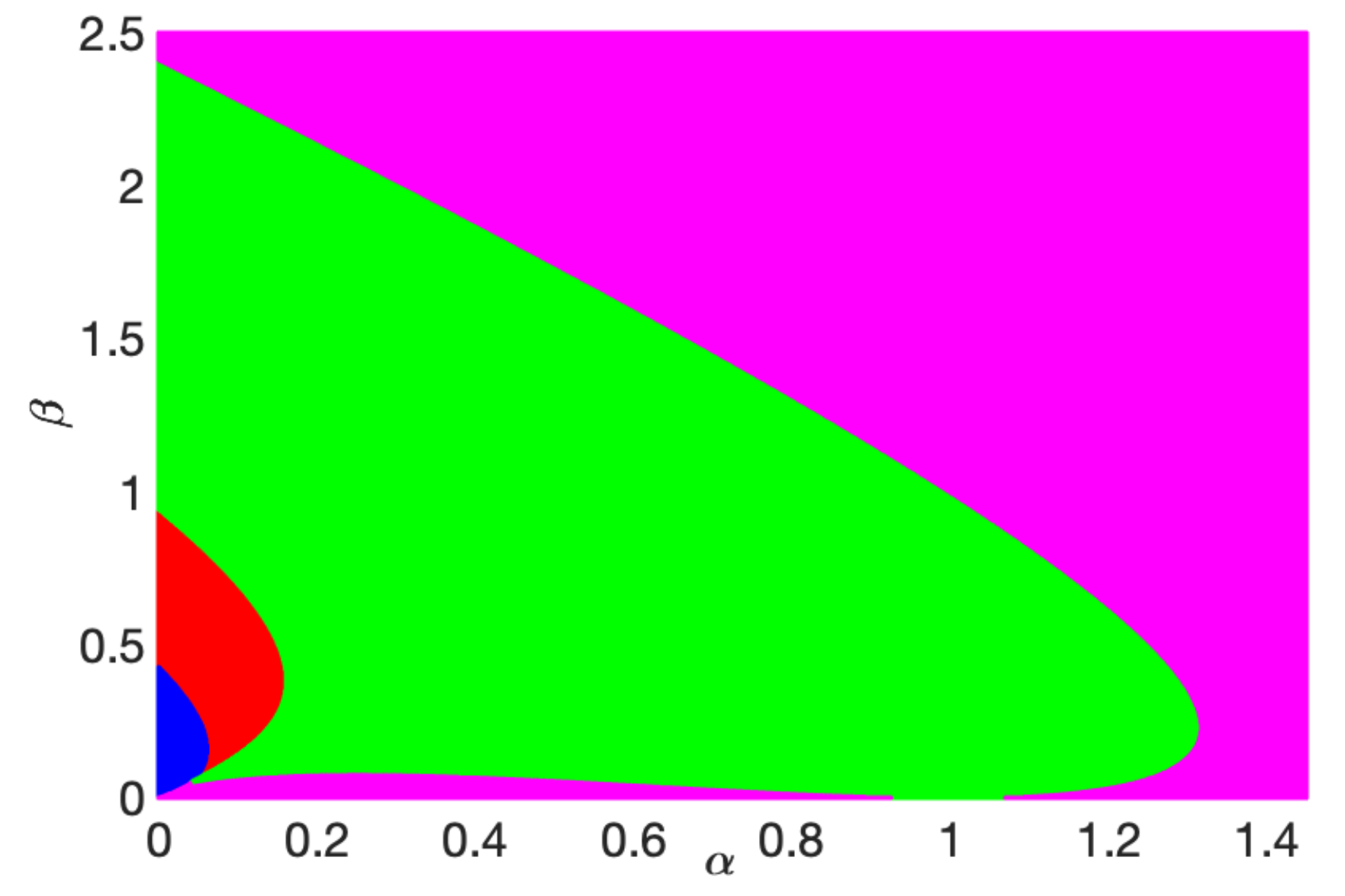} \\
{\small (a) }
\end{tabular}
\begin{tabular}{cc}
\includegraphics[width=0.31\textwidth]{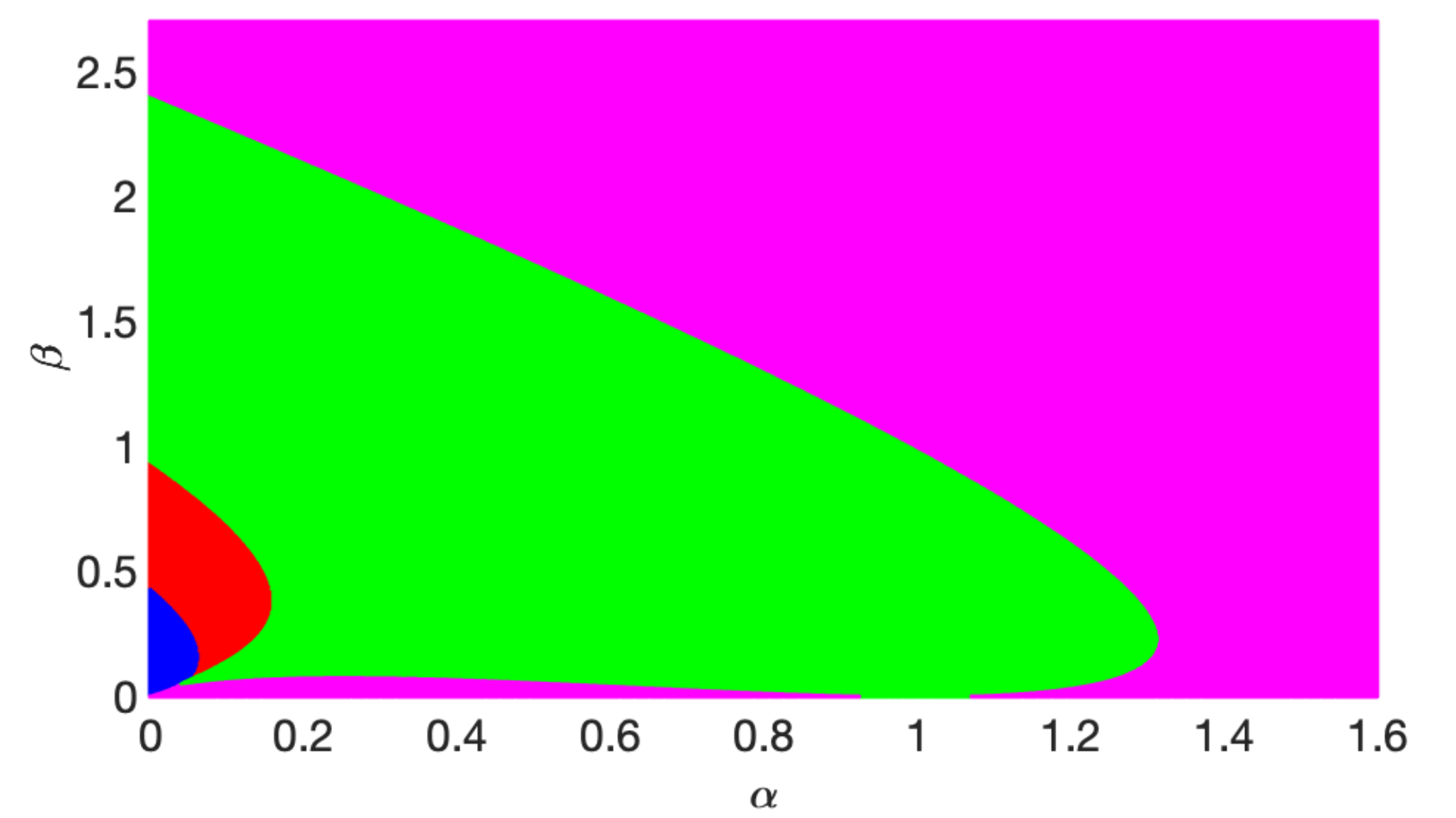} \\
{\small (b)}
\end{tabular}
\begin{tabular}{cc}
\includegraphics[width=0.3\textwidth]{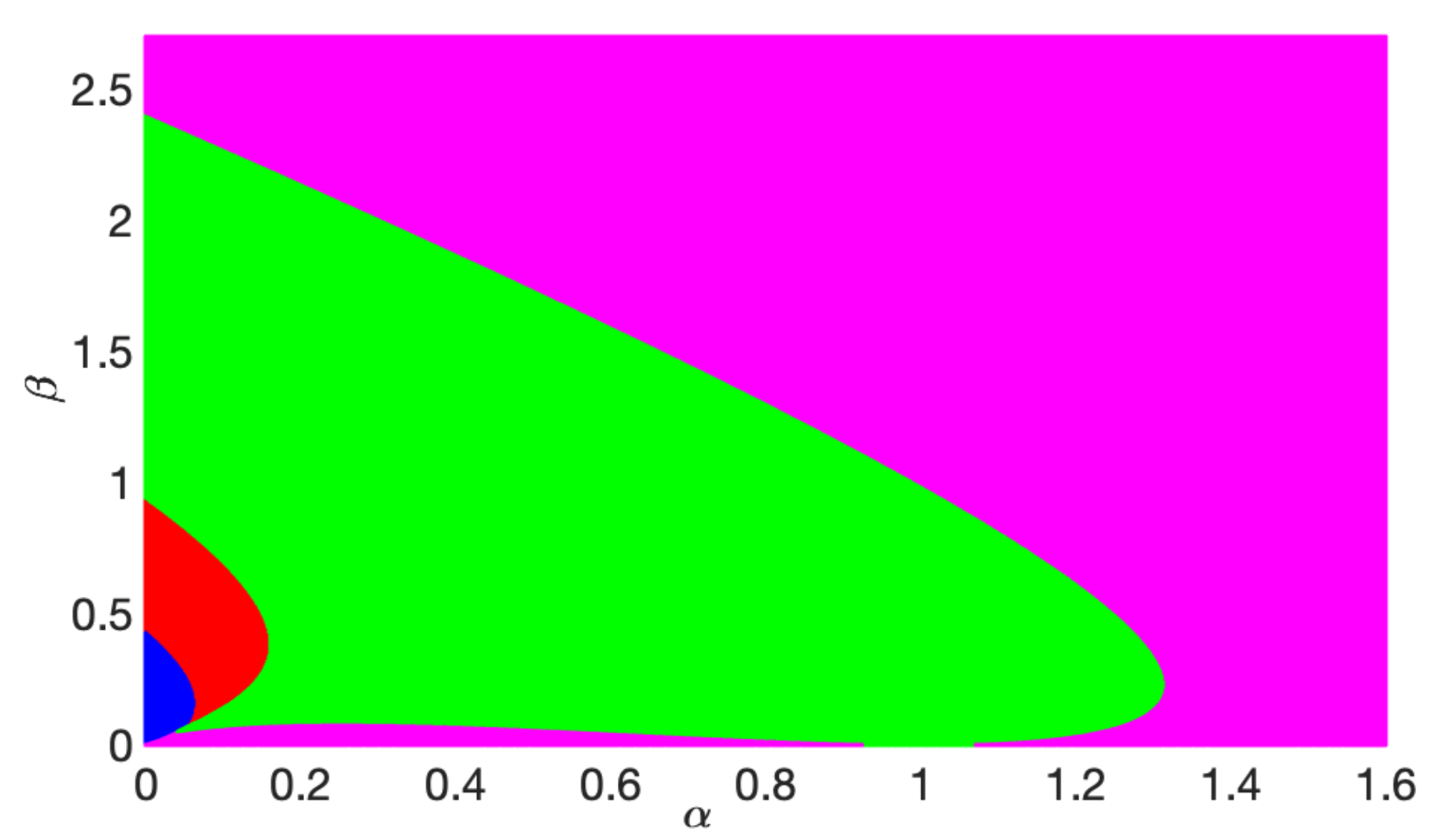} \\
{\small (c)}
\end{tabular}
\begin{tabular}{cc}
\includegraphics[width=0.3\textwidth]{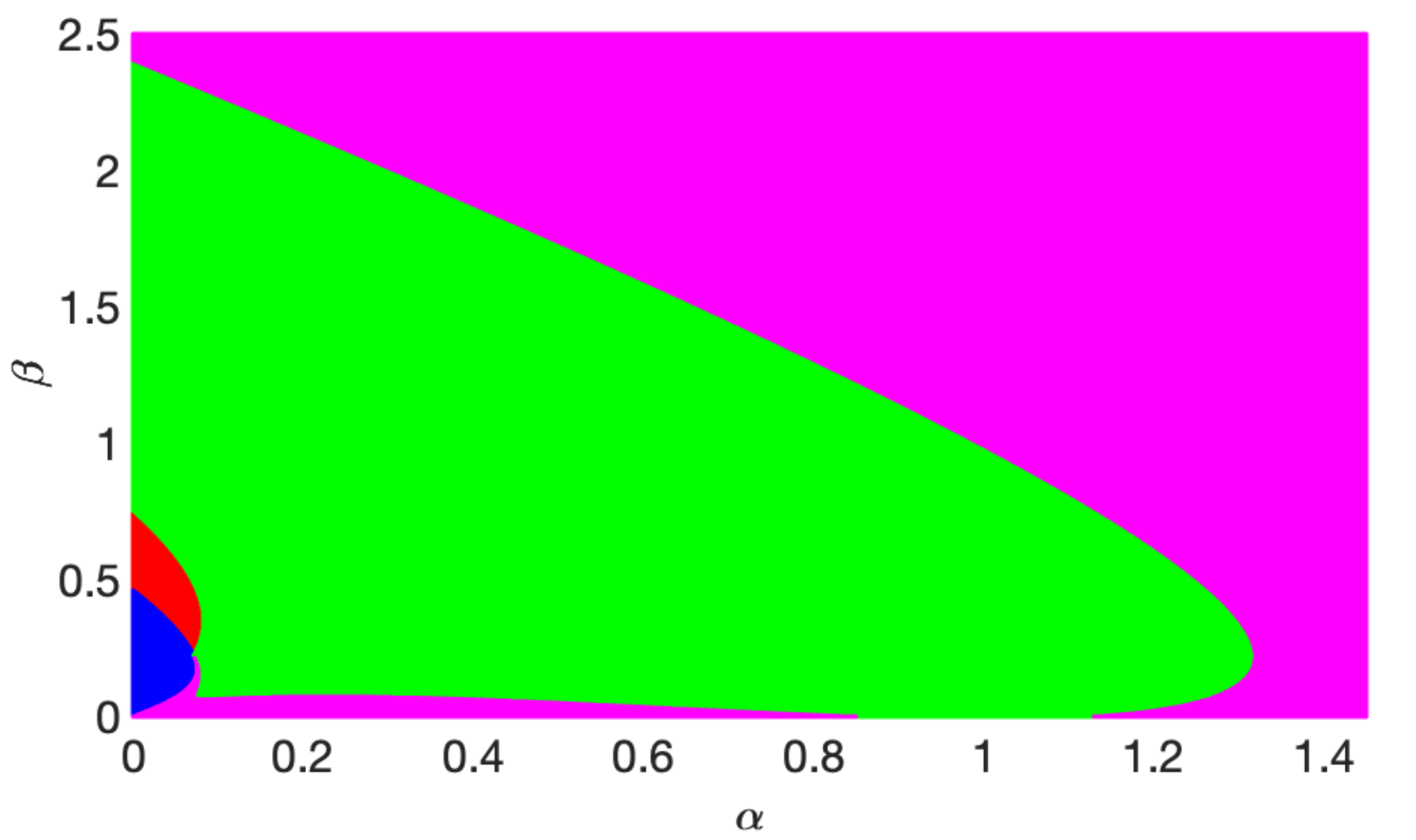} \\
{\small (d) }
\end{tabular}
\begin{tabular}{cc}
\includegraphics[width=0.3\textwidth]{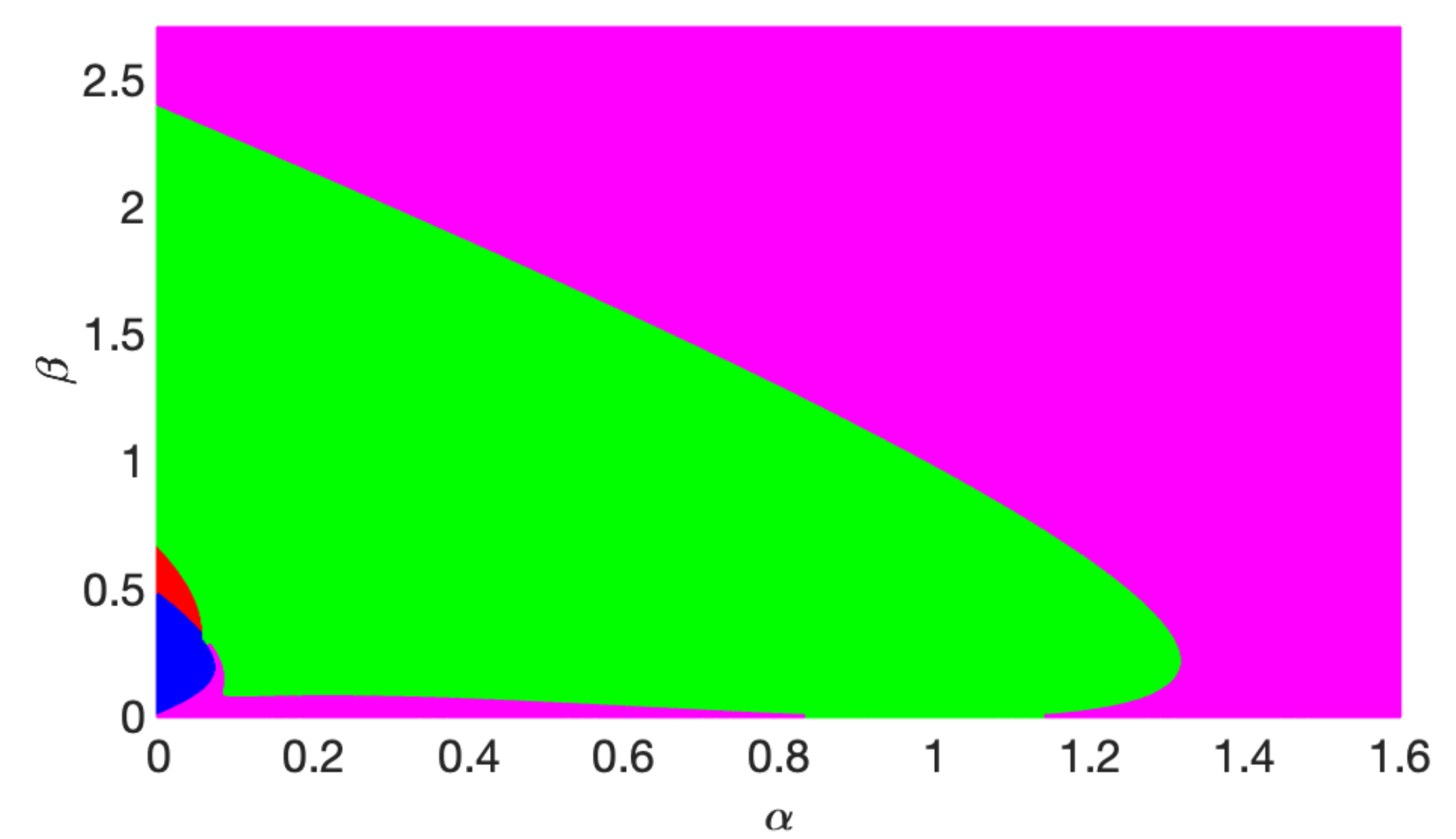} \\
{\small (e) }
\end{tabular}
\begin{tabular}{cc}
\includegraphics[width=0.3\textwidth]{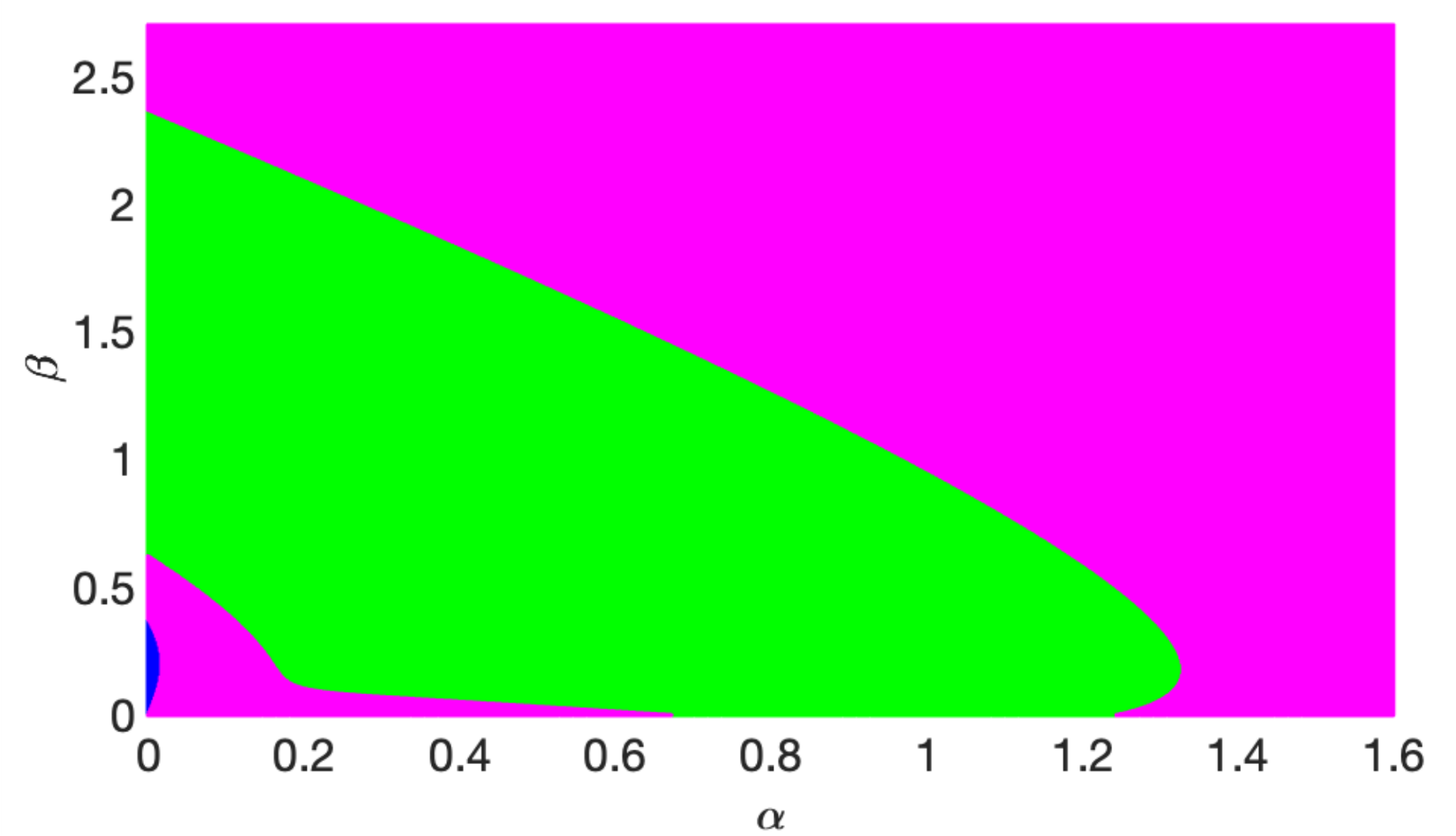} \\
{\small (f)}
\end{tabular}
\begin{tabular}{cc}
\includegraphics[width=0.3\textwidth]{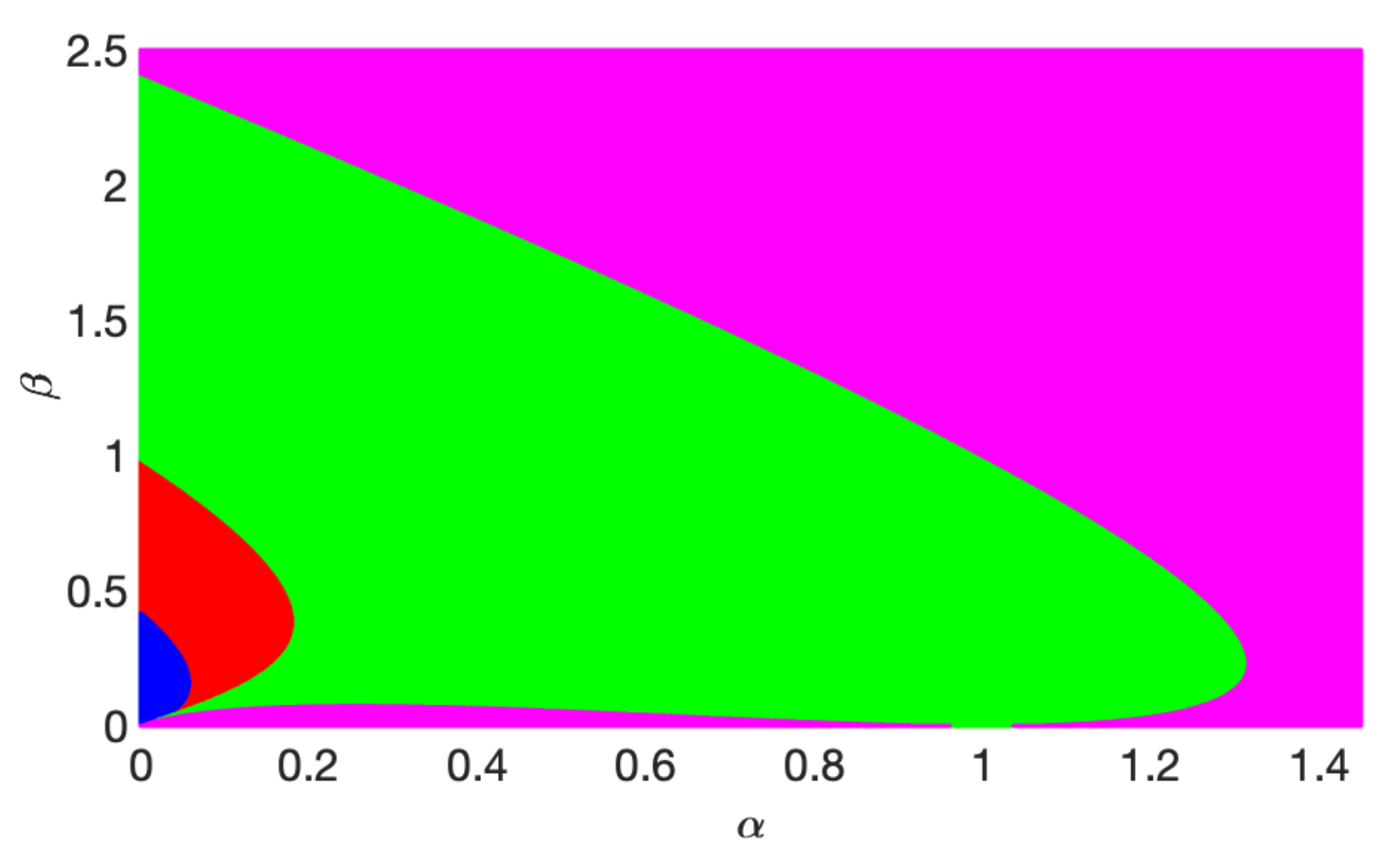} \\
{\small (g) }
\end{tabular}
\begin{tabular}{cc}
\includegraphics[width=0.3\textwidth]{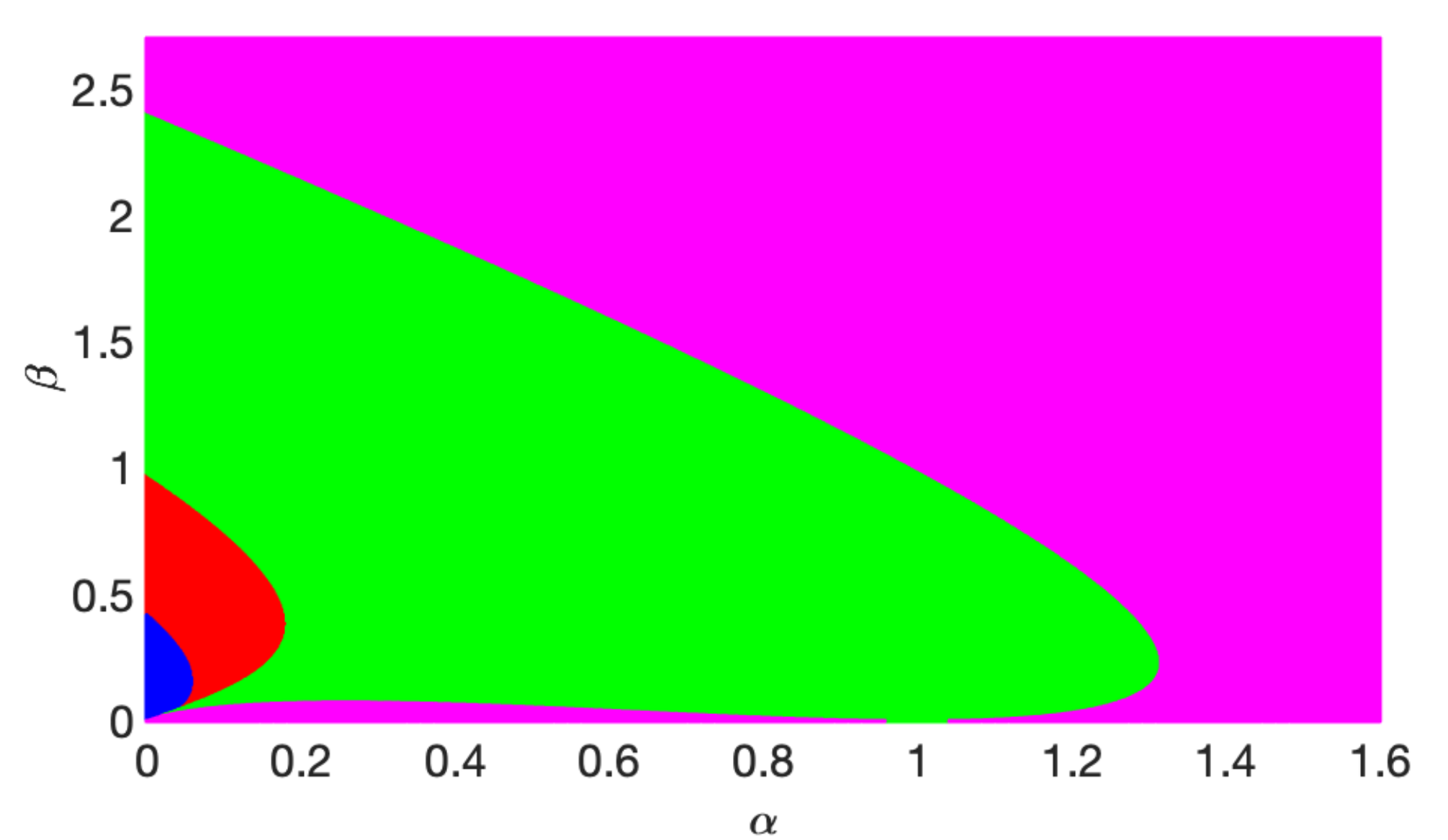} \\
{\small (h) }
\end{tabular}
\begin{tabular}{cc}
\includegraphics[width=0.3\textwidth]{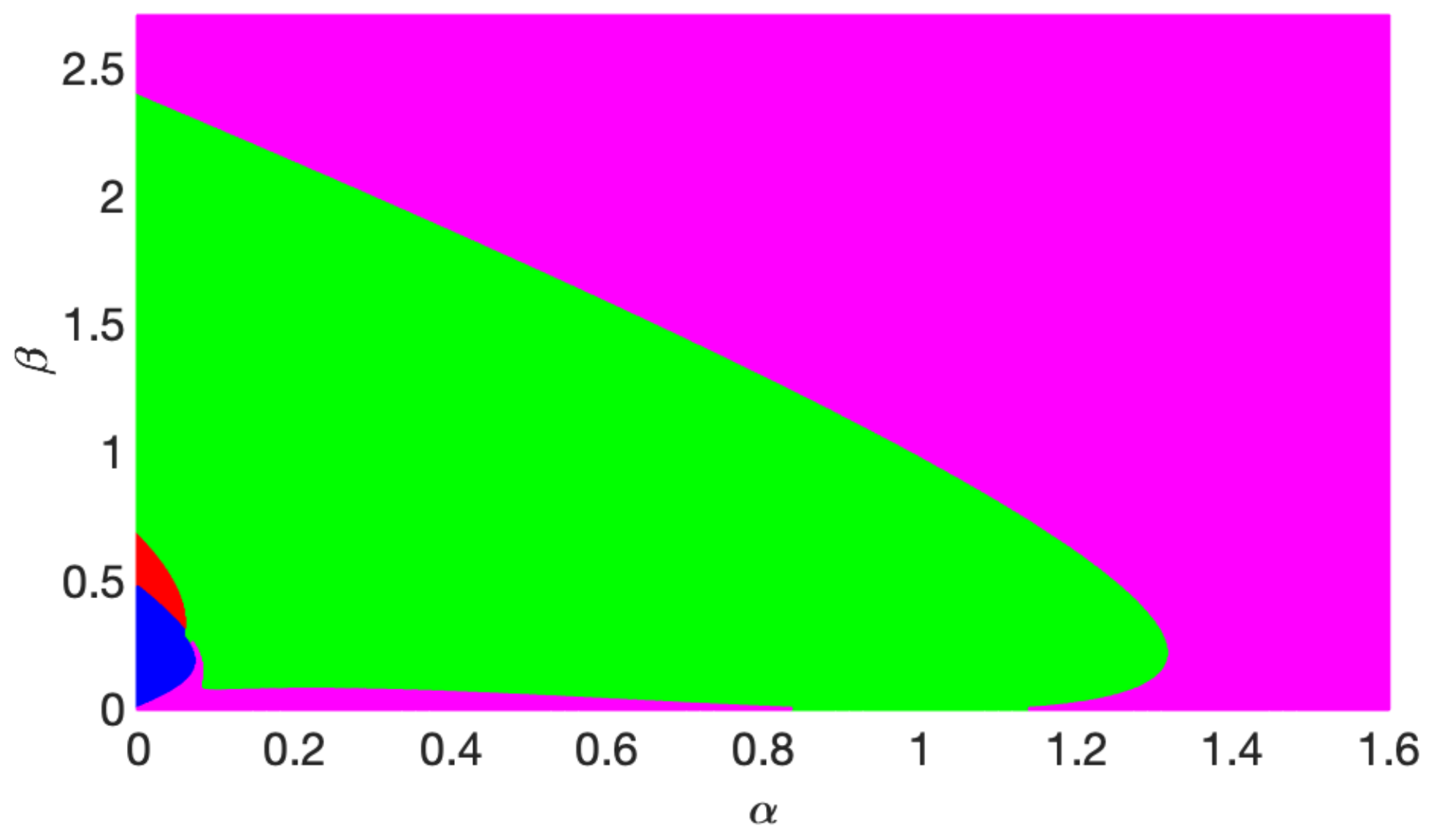} \\
{\small (i)}
\end{tabular}
\begin{tabular}{cc}
\includegraphics[trim = 0mm 0mm 0mm 0mm,  clip, width=0.3\textwidth]{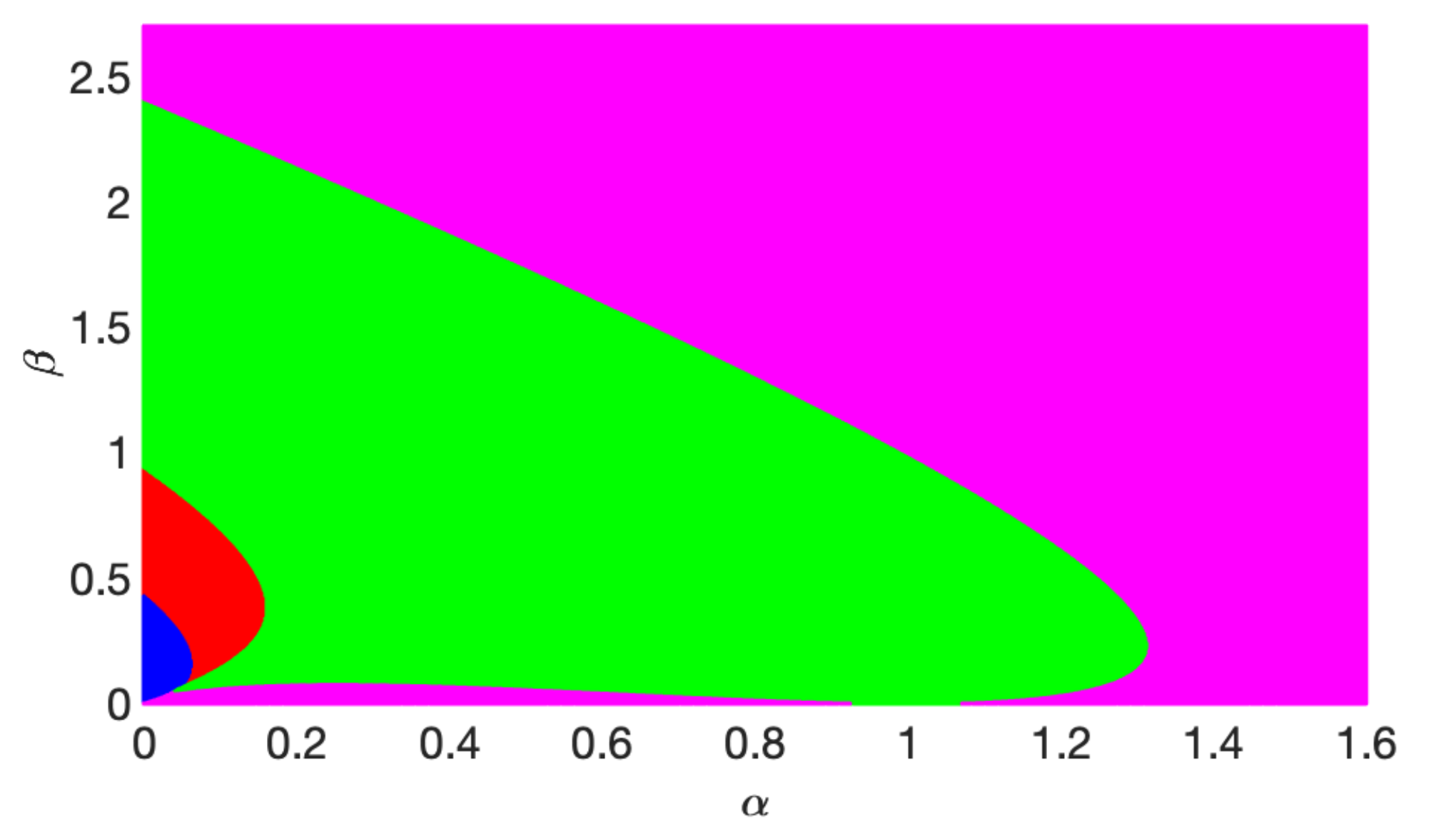} \\
{\small (j) }
\end{tabular}
\begin{tabular}{cc}
\includegraphics[trim = 0mm 0mm 0mm 0mm,  clip, width=0.3\linewidth]{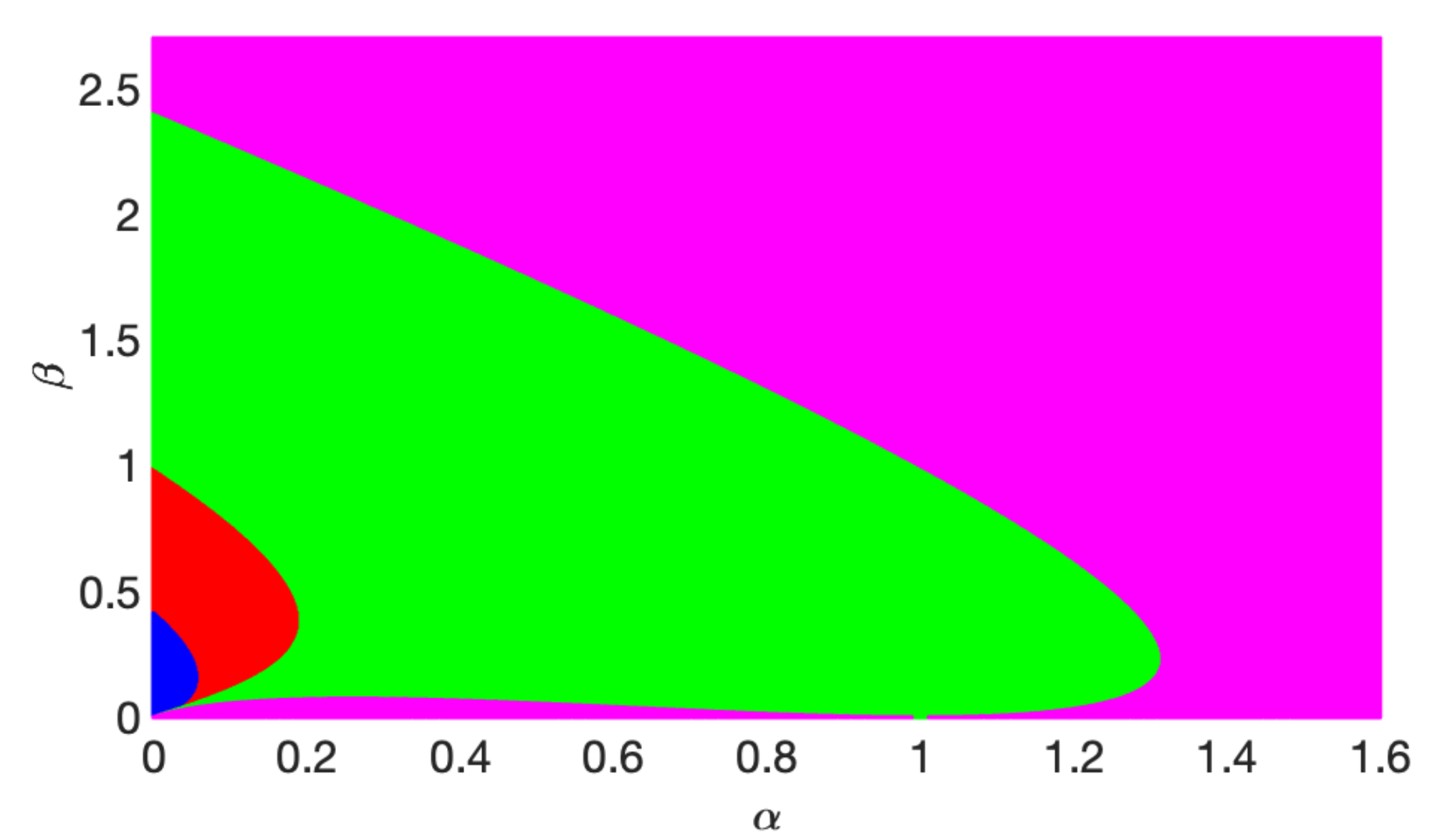}\\
{\small (k)}
\end{tabular}
\begin{tabular}{cc}
\includegraphics[trim = 0mm 0mm 0mm 0mm,  clip, width=0.3\textwidth]{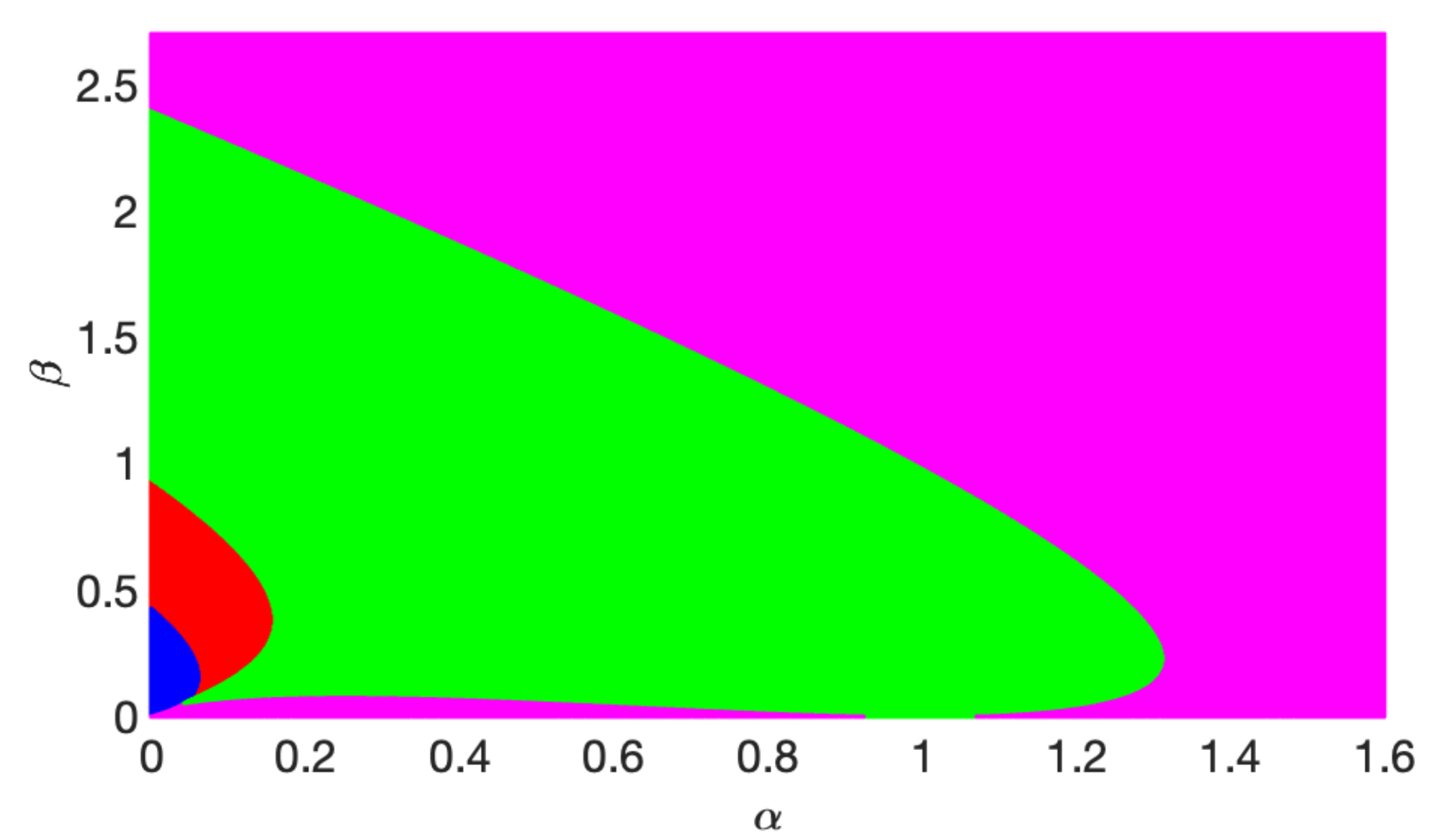} \\
{\small (l)}
\end{tabular}
\caption{First column (rectangular domain), Second column (disc-shape domain), Third Column (ring-shape domain). Full parameter classification with equal case of (a)-(c) $k^2(m,n)=1$, (d)-(f) equal case of domain size $L=1$, $\rho=1$ (g)-(i) equal case domain sizes as $L=3.8$, $\rho=3.8$, (j)-(l) equal case of domain sizes $L=15$ and $\rho=15$. Colour coding; magenta regions correspond to the real-distinct negative eigenvalues, green regions correspond to a complex-conjugate pair with a negative real part, red regions correspond to a complex-conjugate pair with a positive real part, the blue region corresponds to a real-distinct pair with at least one positive eigenvalue.}
\label{fullclass1}
\end{figure}
\begin{table}[ht]
\tbl{Spatiotemporal pattern formation conditions for cross-diffusive reaction-diffusion systems on two-dimensional geometries}
{\begin{tabular}{l c  c c l}\\[-1pt]
\toprule
Type of geometry&Eigenmodes $k^2_{m,n}$&Sufficient condition on domain size\\[4pt]
\hline\\[-1pt]
Rectangular &$k^2_{m,n}=\frac{(m^2+n^2)\pi^2}{L^2}$  &$L^2> \frac{(d-d_ud_v) (m^2+n^2)\pi^2}{(7d+8d_v)\gamma}$ \\[6pt]
Circular (disc)  &$k^2_{m,n}=\frac{4(2m+1)(n+2m+1)(n+4m)}{\rho^2(n+4m+2)}$ & $ \rho^2> \frac{4(d-d_ud_v) (2m+1)(n+2m+1)(n+4m)}{(7d+8d_v)(n+4m+2)\gamma}$\\[6pt]
Flat ring (annulus)  &$k_{m,n}^2=\frac{8(2m+1)(n+2m+1)(n+4m)}{a(\rho+a)(n+4m+2)}$ &$\rho> \frac{8(d-d_ud_v) (2m+1)(n+2m+1)(n+4m)-\gamma a^2(7d+8d_v)(n+4m+2)}{(7d+8d_v)(n+4m+2)\gamma}$\\[2pt]
\botrule
\end{tabular}}
\label{Tab1}
\end{table}

\begin{figure}[H]
\begin{tabular}{cc}
\includegraphics[width=0.45\textwidth]{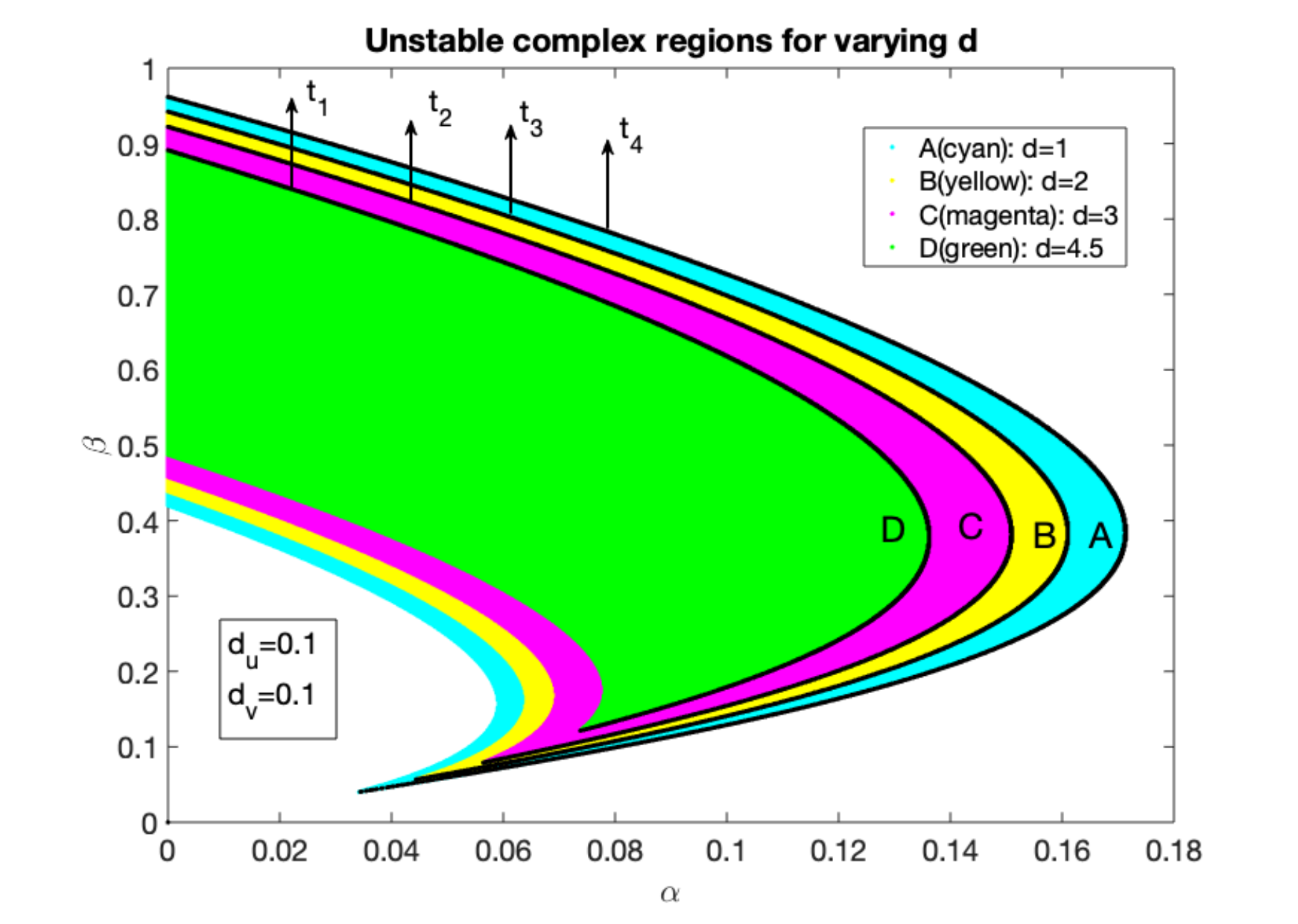} \\
{\small (a) Varying $d$}
\end{tabular}
\begin{tabular}{cc}
\includegraphics[width=0.45\textwidth]{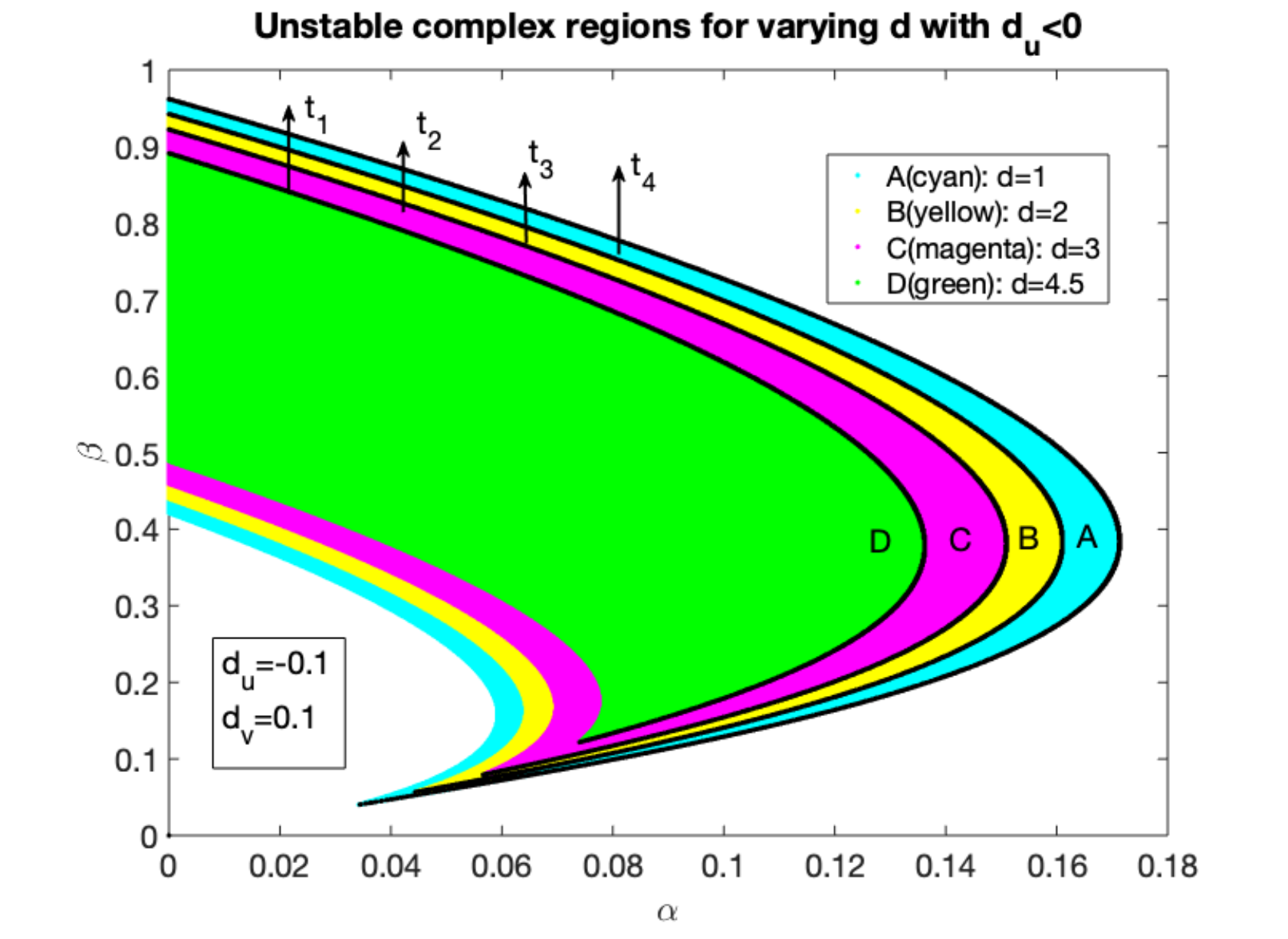} \\
{\small (b) Varying $d$ with $d_u<0$}
\end{tabular}
\begin{tabular}{cc}
\includegraphics[width=0.45\textwidth]{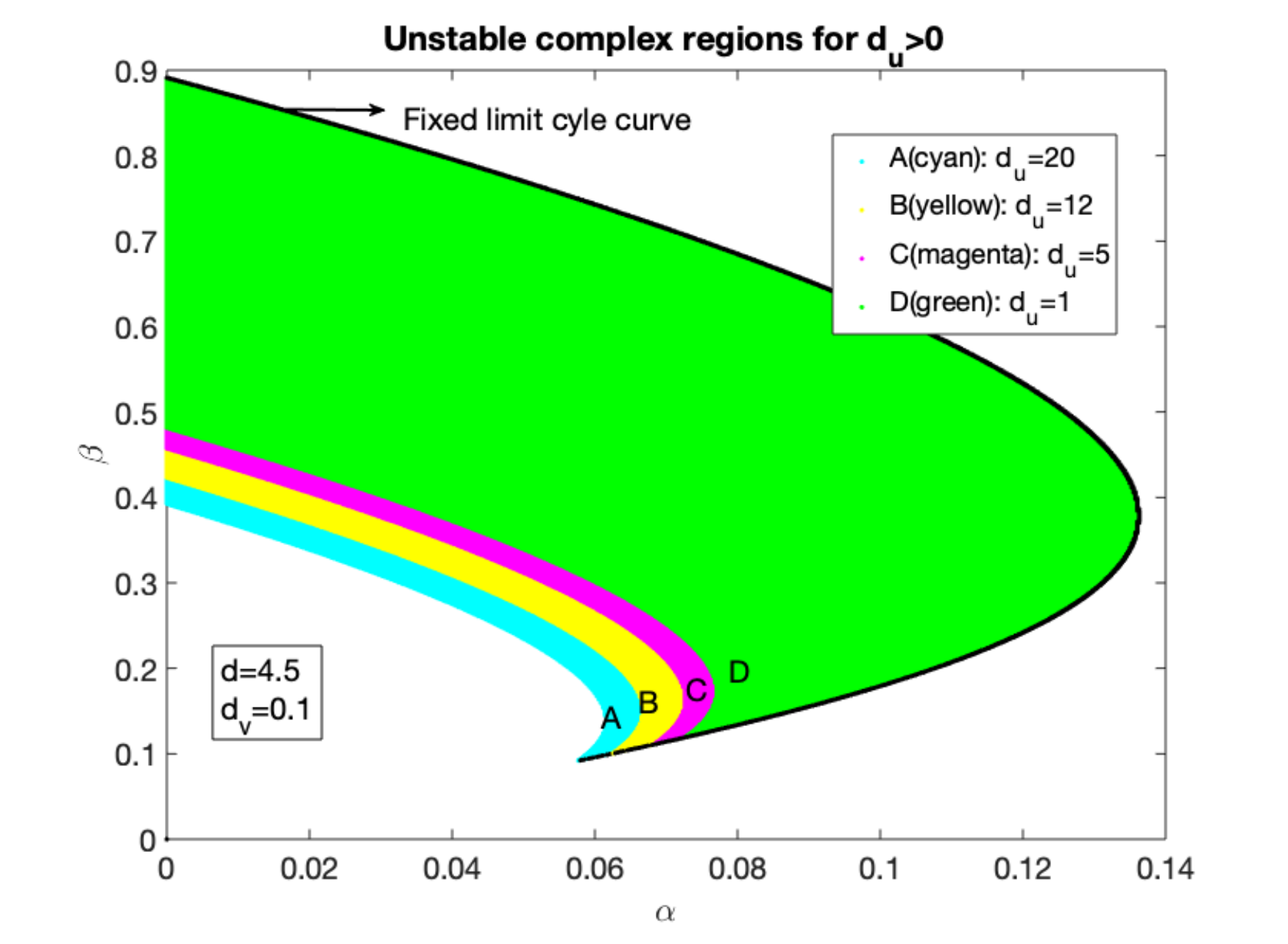} \\
{\small (c) Varying $d_u$ positively}
\end{tabular}
\begin{tabular}{cc}
\includegraphics[width=0.45\textwidth]{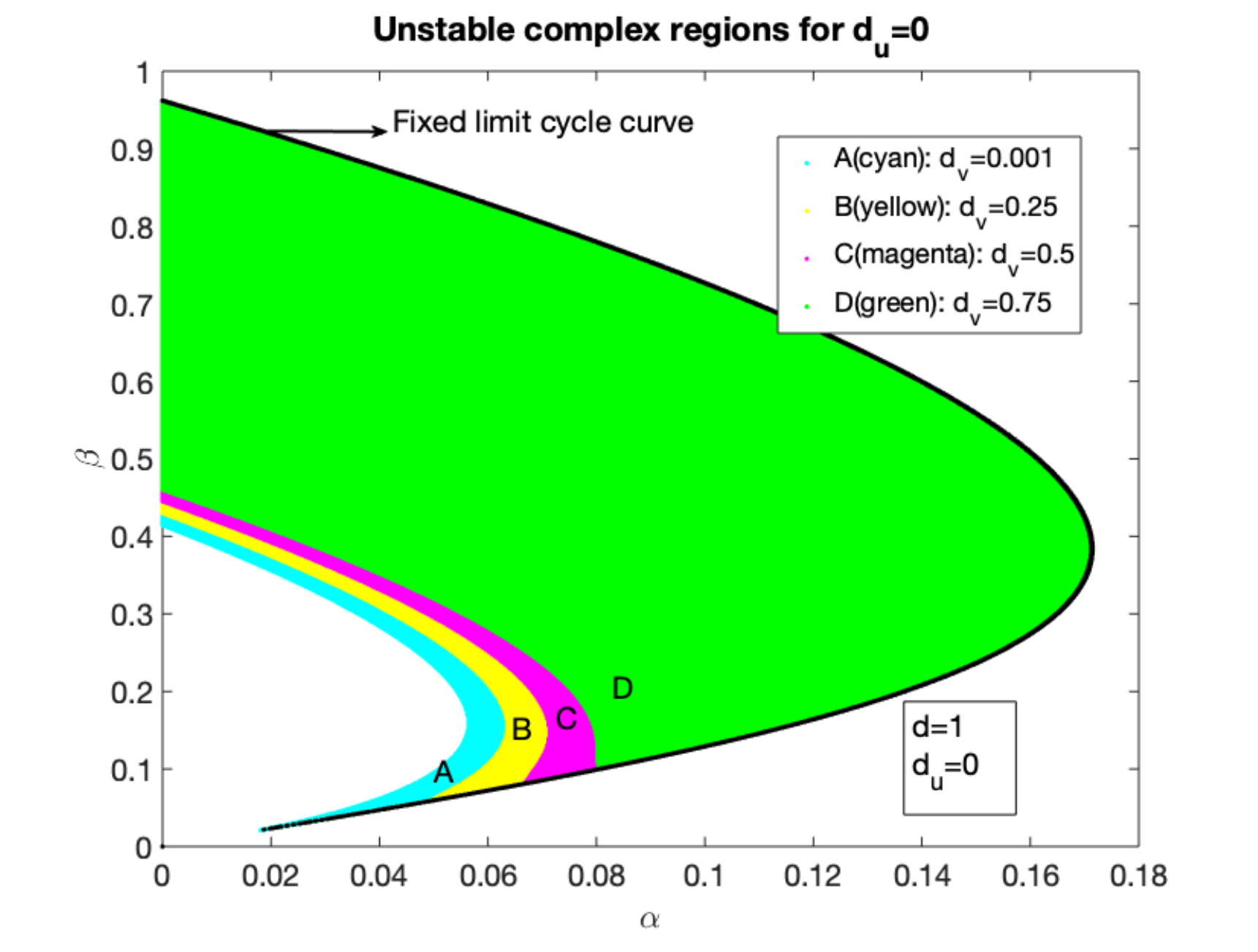} \\
{\small (d) Varying $d_v$ for $d_u=0$}
\end{tabular}
\begin{tabular}{cc}
\includegraphics[width=0.45\textwidth]{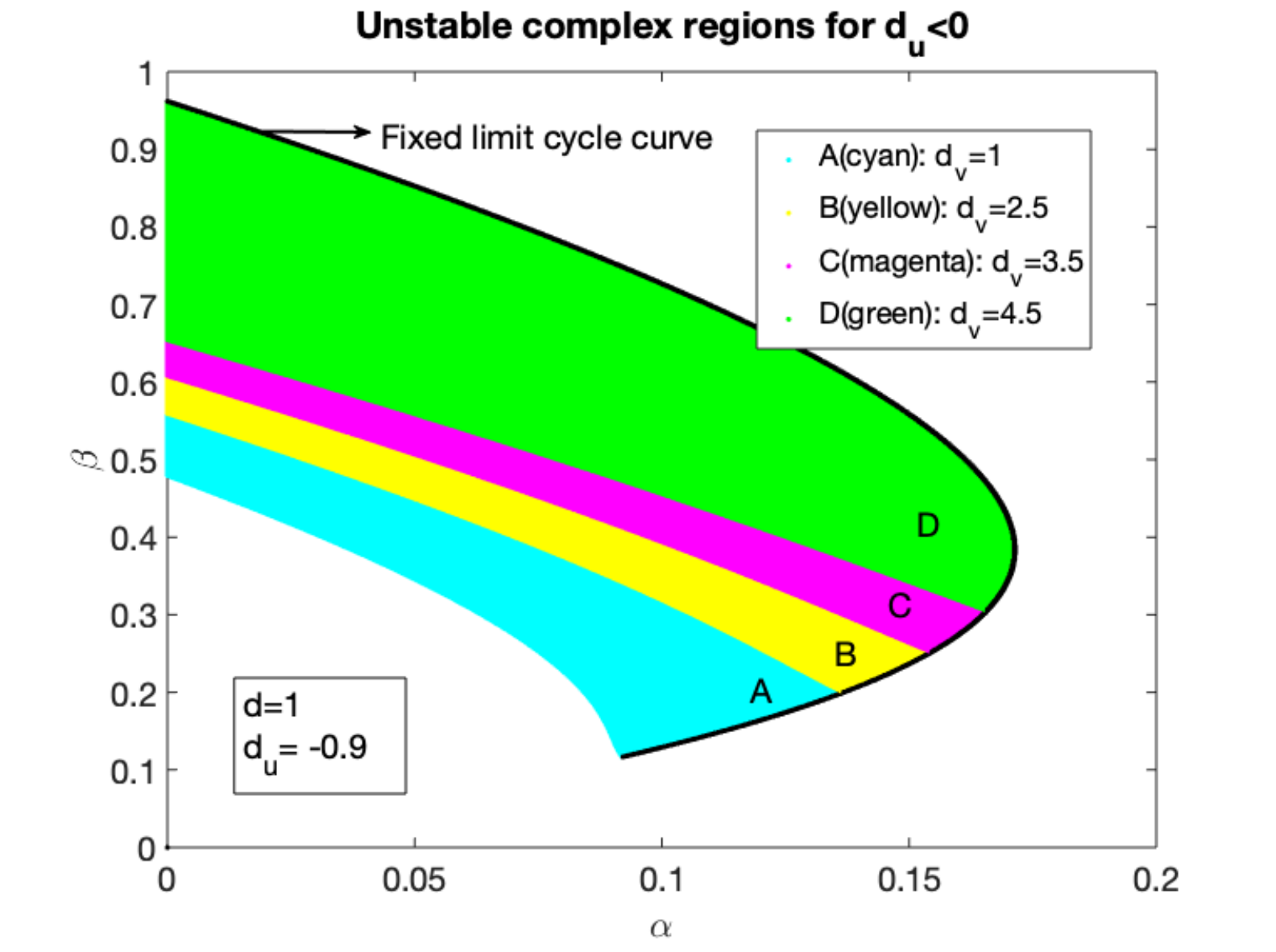} \\
{\small (e) Varying $d_v$ for $d_u<0$}
\end{tabular}
\begin{tabular}{cc}
\includegraphics[width=0.45\textwidth]{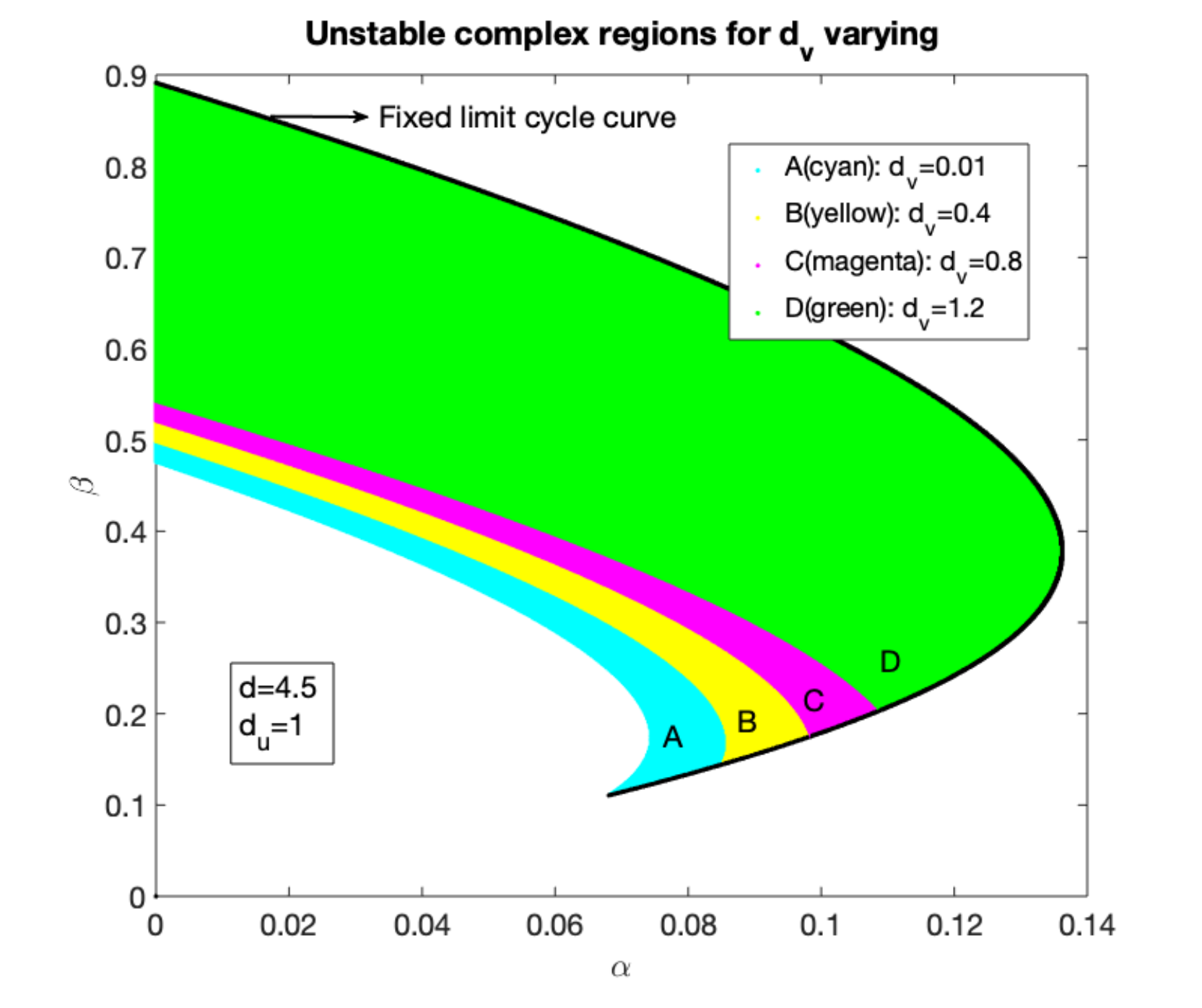} \\
{\small (f) Varying $d_v$ for $d_u=1$}
\end{tabular}
\caption{(a)-(f) Parameter spaces illustrating Hopf bifurcation regions and limit cycle curves with domain-size $\rho$  restricted to satisfy conditions of Theorems \ref{theo1} and  \ref{Maincond} for various system parameters. $n=0.1$, $m=1$, $\rho=14.8$, $\gamma=100$.}
\label{HopfTransParameter}
\end{figure}

\begin{figure}[H]
\begin{tabular}{cc}
\includegraphics[width=0.45\textwidth]{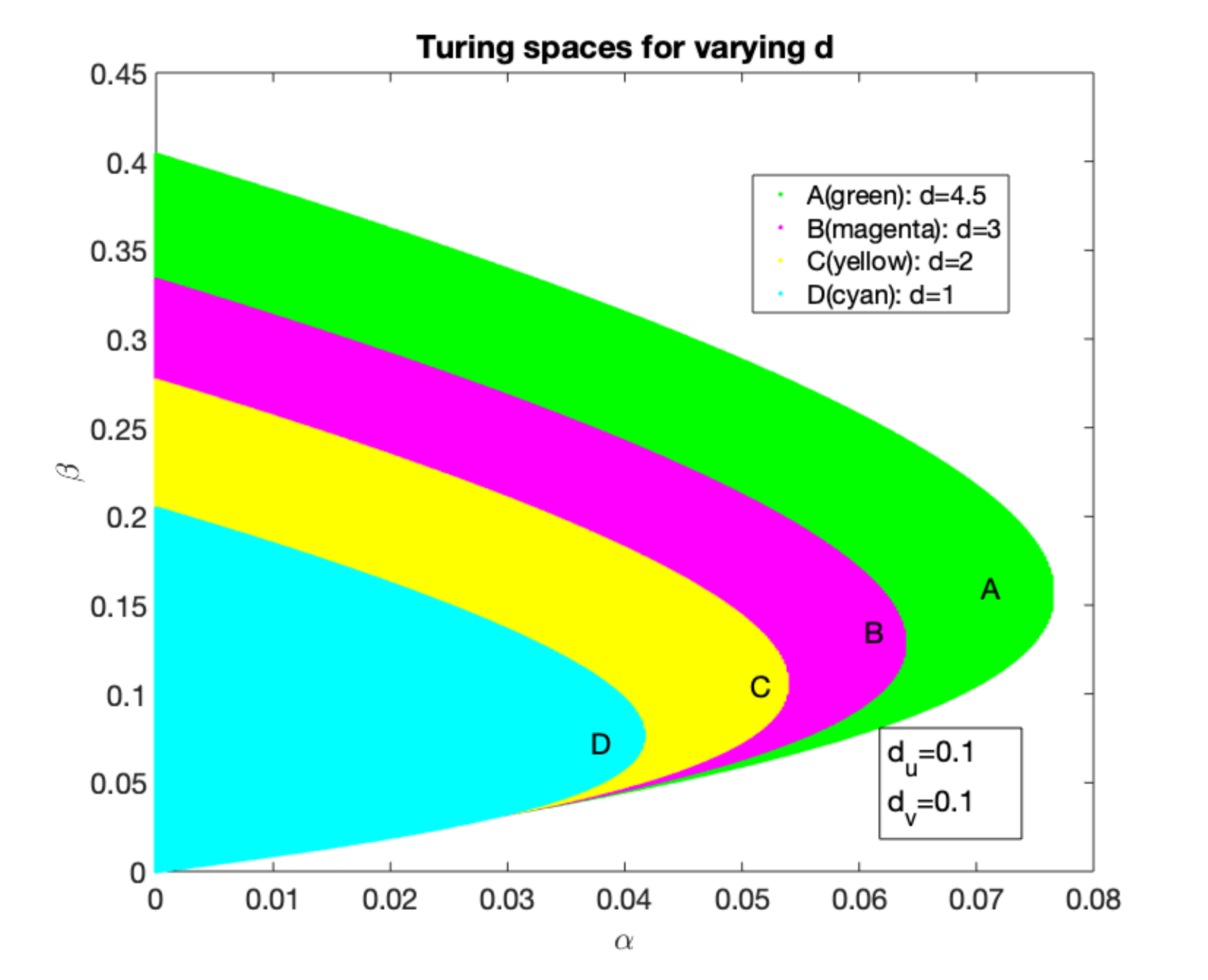} \\
{\small (a) Varying $d$}
\end{tabular}
\begin{tabular}{cc}
\includegraphics[width=0.45\textwidth]{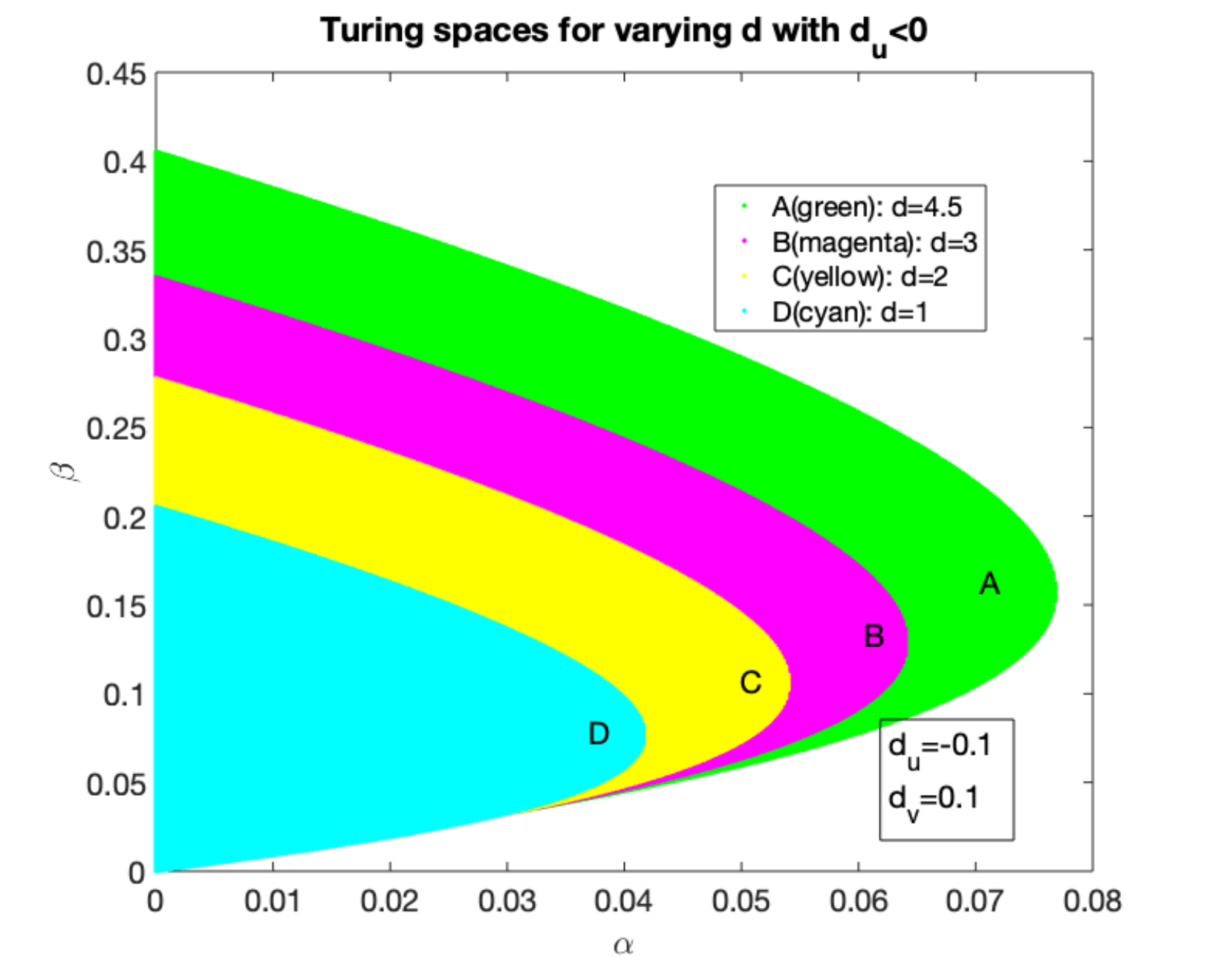} \\
{\small (b) Varying $d$ with $d_u<0$}
\end{tabular}
\begin{tabular}{cc}
\includegraphics[width=0.45\textwidth]{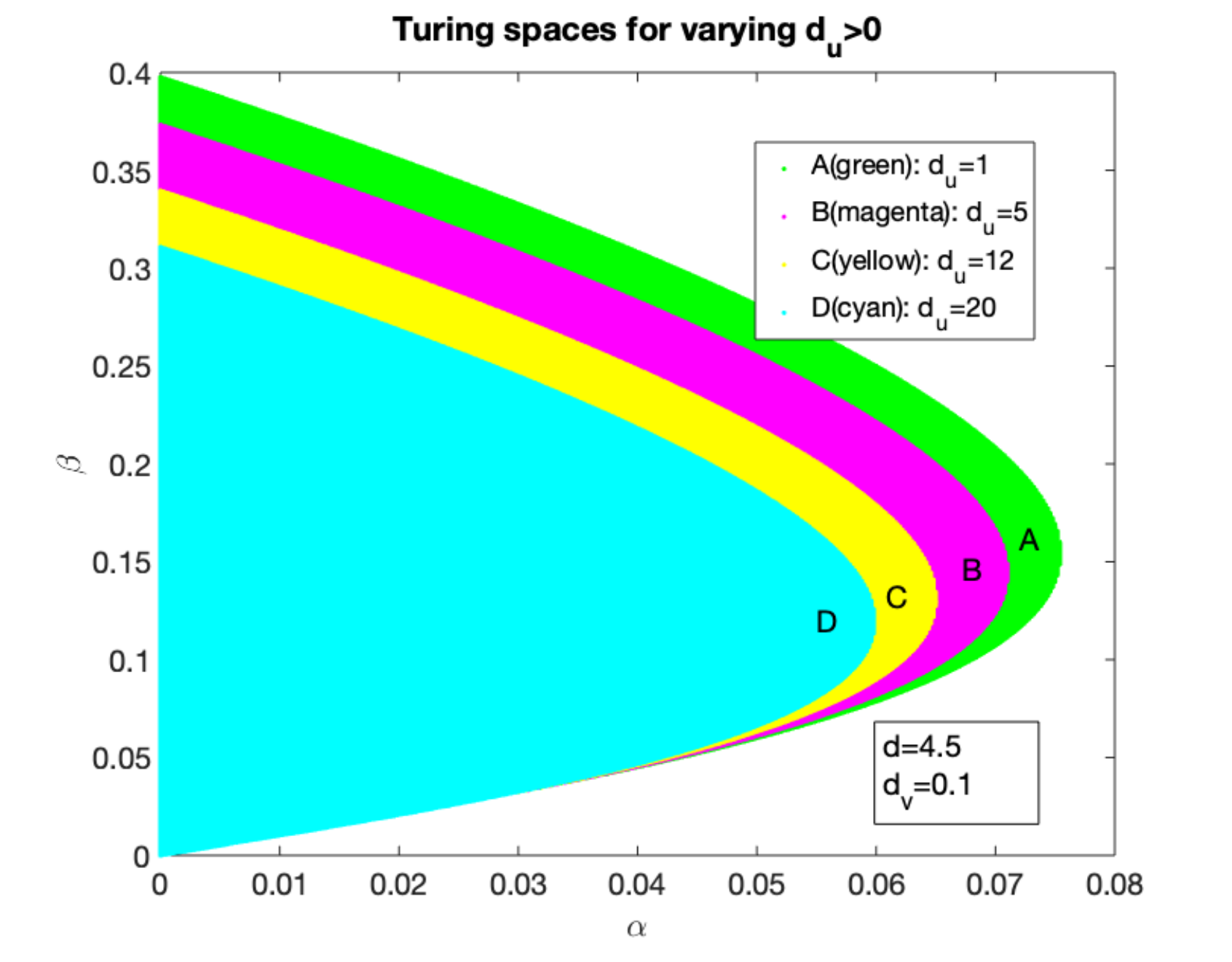} \\
{\small (c) Varying $d_u$ positively}
\end{tabular}
\begin{tabular}{cc}
\includegraphics[width=0.45\textwidth]{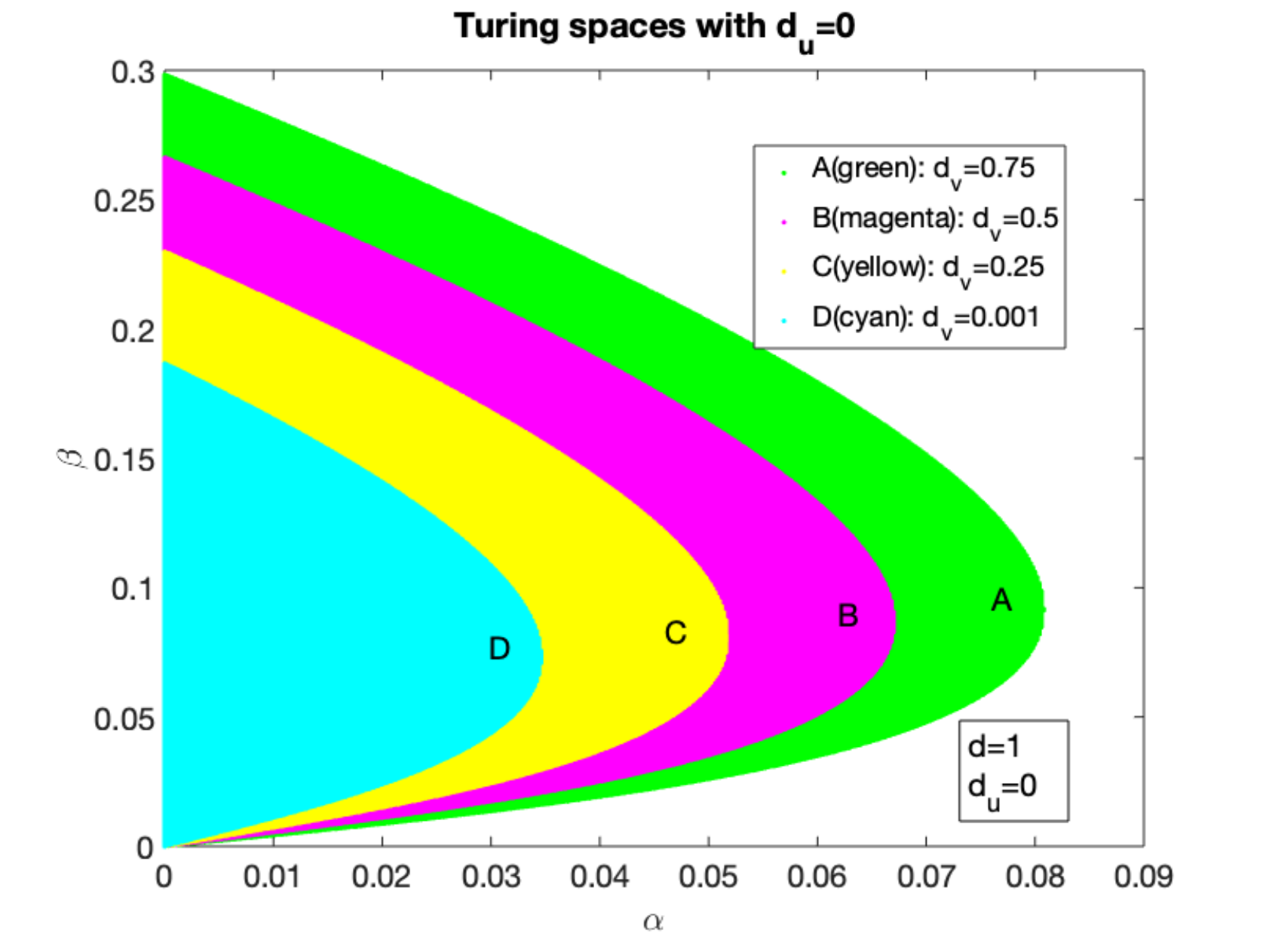} \\
{\small (d) Varying $d_v$ for $d_u=0$}
\end{tabular}
\begin{tabular}{cc}
\includegraphics[width=0.45\textwidth]{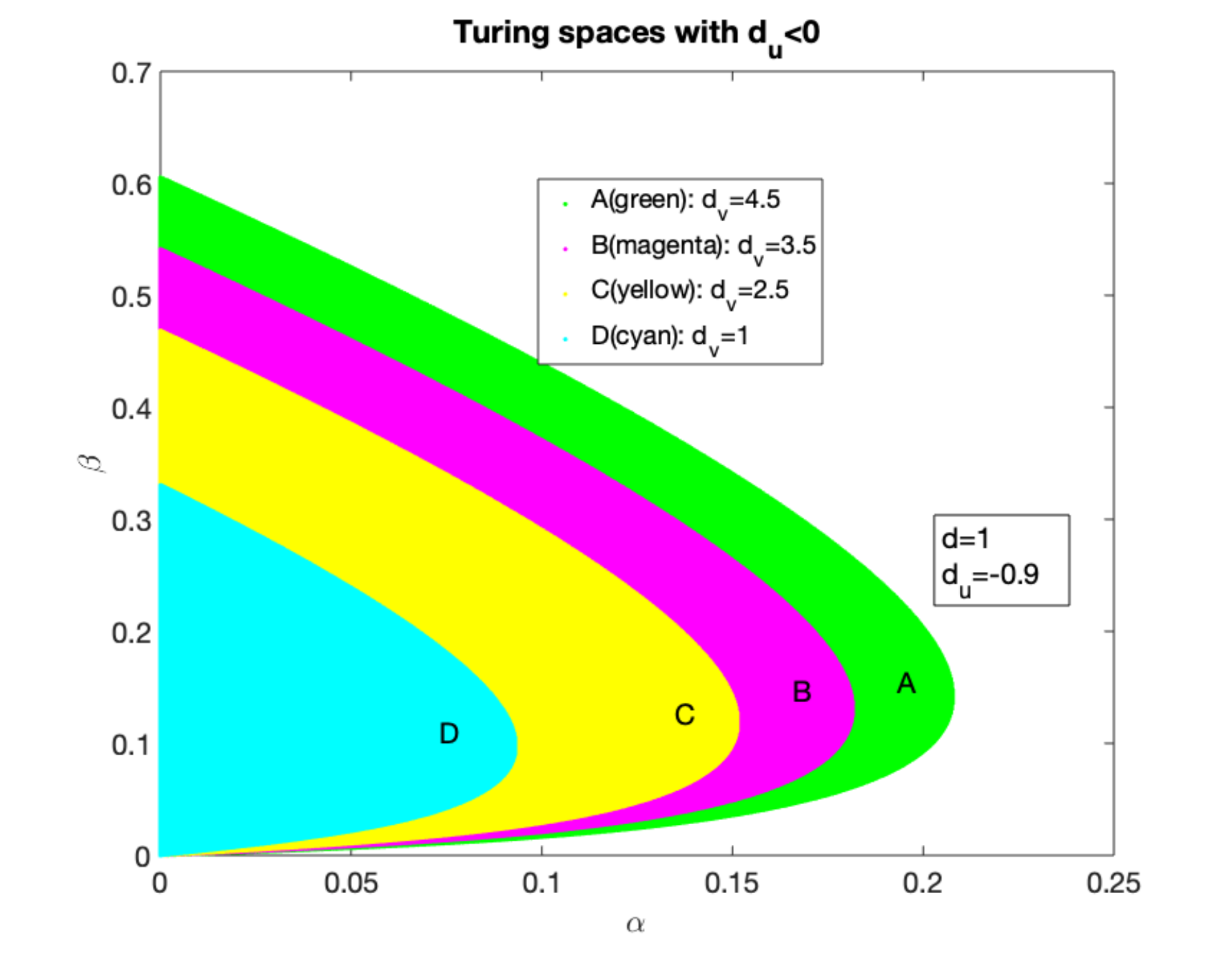} \\
{\small (e) Varying $d_v$ for $d_u<0$}
\end{tabular}
\begin{tabular}{cc}
\includegraphics[width=0.45\textwidth]{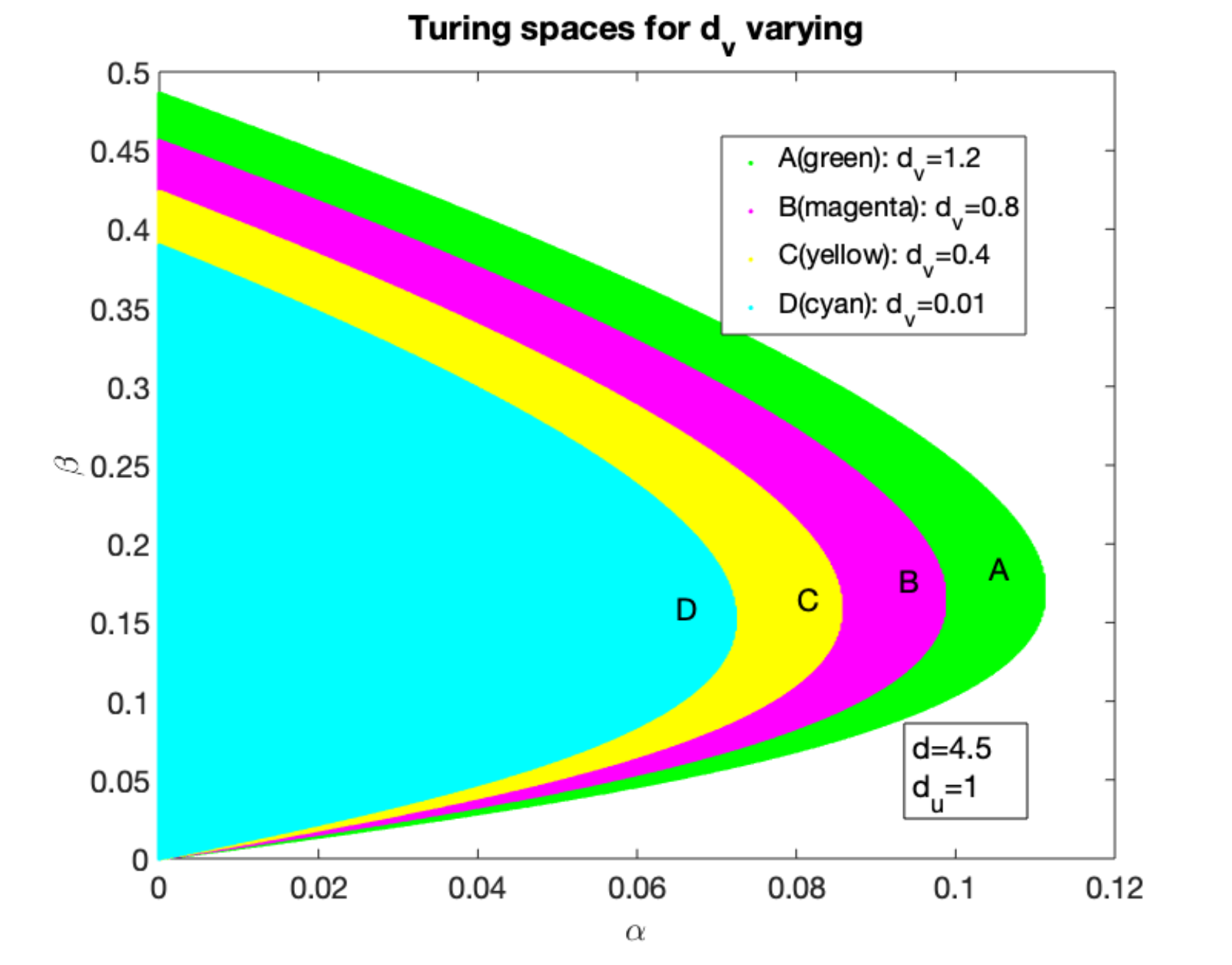} \\
{\small (f) Varying $d_v$ for $d_u=1$}
\end{tabular}
\caption{(a)-(f) Turing regions with domain-size $\rho$  restricted to satisfy conditions of Theorems \ref{theo1} and \ref{Maincond} for various system parameters. $n=0.1$, $m=1$, $\rho=14.8$, $\gamma=100$.}
\label{TuringTheo1}
\end{figure}

\begin{figure}[H]
\begin{tabular}{cc}
\includegraphics[width=0.45\textwidth]{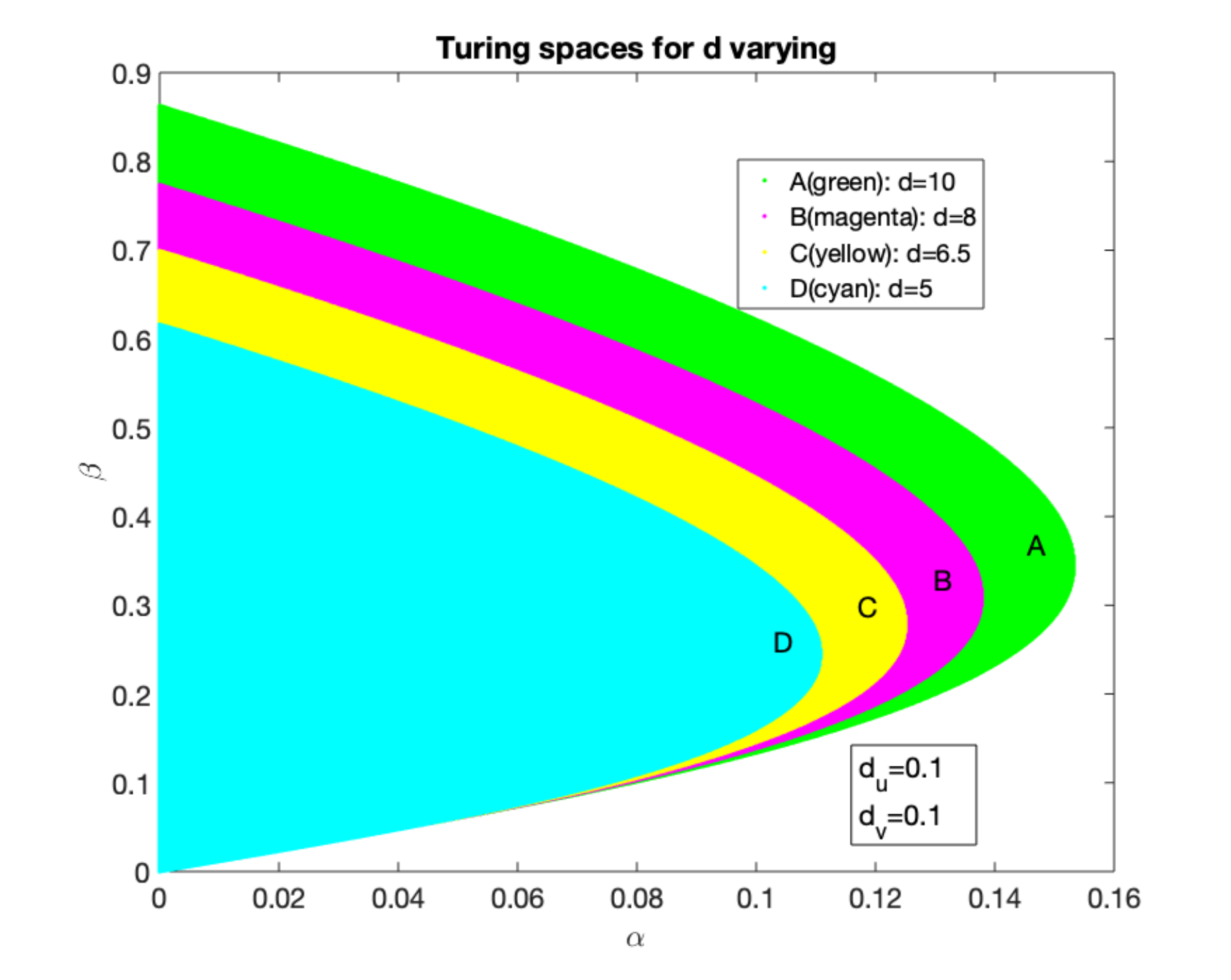} \\
{\small (a) Varying $d$}
\end{tabular}
\begin{tabular}{cc}
\includegraphics[width=0.45\textwidth]{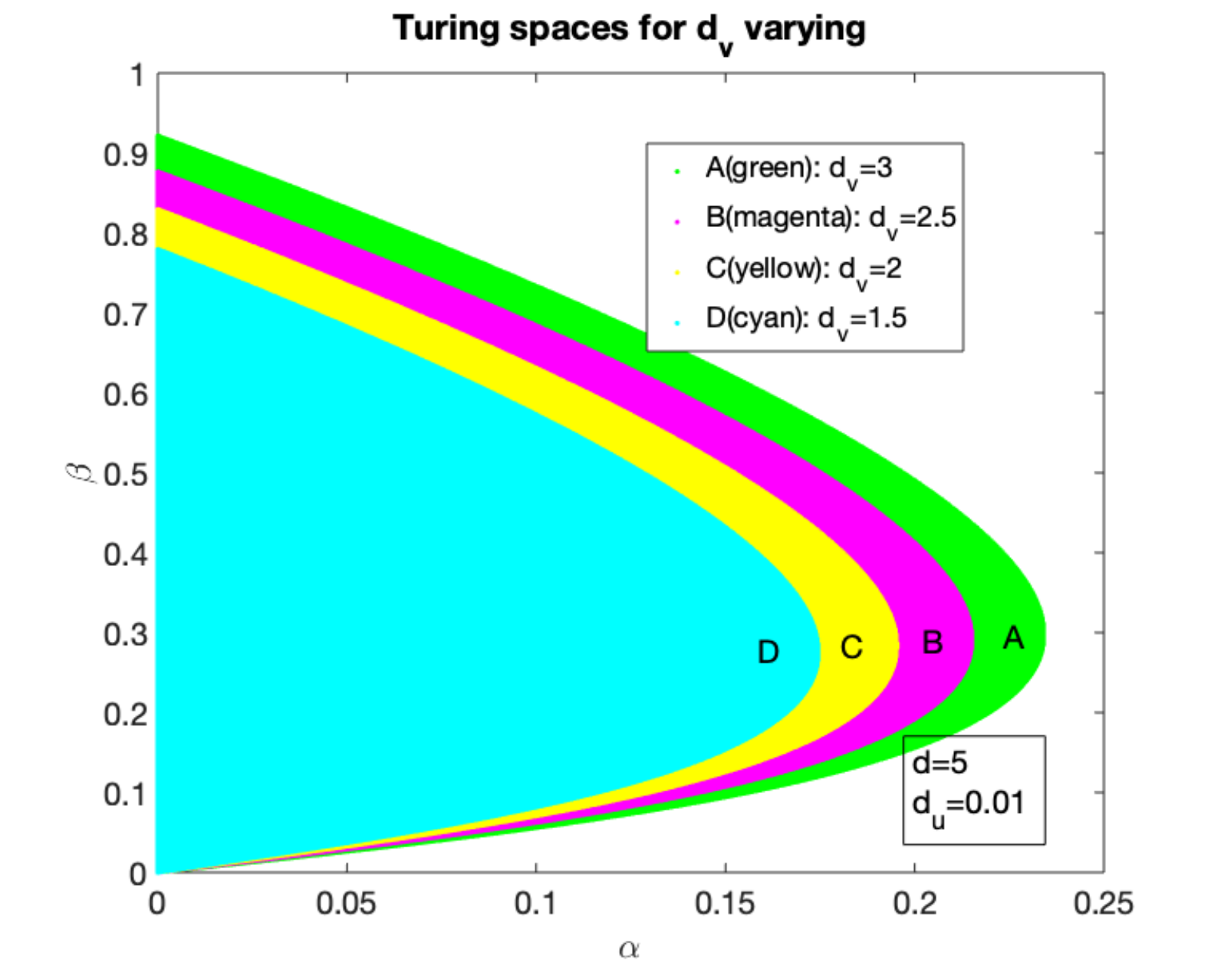} \\
{\small (b) Varying $d_v$}
\end{tabular}
\caption{(a)-(b) Turing regions with domain-size $\rho$  restricted to satisfy conditions of Theorems \ref{theo2} and \ref{Maincond} for various system parameters. $n=0.1$, $m=1$, $\rho=7.3$, $\gamma=100$.}
\label{TuringTheo2}
\end{figure}

\section{Finite element simulations on ring-shape domain}\label{sec:FemSim}

In this section, we employ the finite element method to obtain numerical simulations of the model system \eqref{r1}, aiming to validate the theoretical findings outlined in Section \ref{sec:linearstability}. The main strategy of the finite element method lies in the weak formulation which satisfies the weak variational form \cite{madzvamuse2007velocity,barreira2011surface,madzvamuse2014fully,lakkis2013implicit,tuncer2017projected,frittelli2021bulk,song2022efficient,frittelli2023bulk,frittelli2017lumped}. The weak variational form is then transformed into a semi-discrete weak variational form where the domain has been triangulated into a conforming finite element formulation \cite{evans2022partial,brenner2008mathematical}. This process results in a system of ordinary differential equations, then the desired finite element numerical solution is obtained by using a suitable time-stepping scheme, resulting in a fully-discrete weak formulation.  Details of time-stepping schemes for reaction-diffusion systems can be found in \cite{madzvamuse2006time1, ruuth1995implicit}.

In all our simulations presented in the preceding section, model parameters are given in the figures and their captions. The reaction-kinetic parameters $\alpha$ and $\beta$ are selected from the parameter spaces generated, while the self- and cross-diffusion coefficients are chosen to ensure that the necessary and sufficient conditions for the generation of the parameter spaces are fulfilled. Furthermore, we choose the scaling parameter appropriately.  It is essential to note that the shape of the patterns and convergence rates remained unchanged when we used more and more refined time and space steps. Since we are working on a non-convex domain, we use an open source package for the domain generation called $Gmsh$ \cite{geuzaine2009gmsh}. We use 7936 elements and 8292 degrees of freedom in all our simulations on the discretized domain. 

In all our simulations, initial conditions are taken as small random perturbations around the uniform steady state \cite{madzvamuse2015stability,sarfaraz2017classification, sarfaraz2018domain,sarfaraz2020stability}.  We have used two different forms for the initial conditions to demonstrate the applicability of the numerical solver. First, initial conditions were taken to be small random perturbations near the uniform steady-state given by
\begin{eqnarray*}\label{inits} 
u_0(x,y)= \alpha+\beta+ \epsilon_1 \cos (2\pi(x+y)) + \epsilon_2 \sum\limits_{n=0}^{8} \cos{(n\pi x)} ,\\
v_0(x,y)= \dfrac{\beta}{(\alpha+\beta)^2} + \epsilon_1 \cos(2\pi(x+y))
+ \epsilon_2  \sum\limits_{n=0}^{8} \cos{(n\pi x)},
\end{eqnarray*}
with $\epsilon_1=0.0016$ and $\epsilon_2 = 0.001$. 
Second, we also considered initial conditions prescribed by purely small random perturbations
\begin{eqnarray*}\label{initsnew} 
u_0(x,y)= \alpha+\beta+  \epsilon \, rand , \\
v_0(x,y)= \dfrac{\beta}{(\alpha+\beta)^2} +\epsilon \, rand,
\end{eqnarray*}
where $rand$ is a randomly generated number from the uniform distribution with mean one and variance zero. The first approach allows to validate the numerical solver in the case that parameter values are selected to excite specific modes $m$ and $n$, for example. The second, assumes no specific knowledge of the behaviour of the solution of the reaction-diffusion system close to bifurcation points. For mode selection and the role of initial conditions close to bifurcation points, the interested reader is referred to consult \cite{madzvamuse2015stability,madzvamuse2000numerical}.

We start our numerical experiments to understand the effects of parameter choice in the spatial pattern formation using conditions on the value representing the thickness of the annulus ($\Omega$), which is denoted by $\rho$. Fig. \ref{Turing1} shows the Turing type of pattern formation as analytically predicted when system parameters are chosen in adherence to conditions stated in Theorems \ref{theo1} and \ref{Maincond}.  A timestep value of   $\Delta t =0.0025$ and model parameter values $d=1$, $\gamma=730$, $d_u=0.001$, $d_v=0.46$, $\alpha=0.09$ and $\beta=0.2$ are selected for the finite element simulation of System \eqref{r1}. We observe  the convergence of the numerical solution to a spatially inhomogeneous time-independent solution.   During this evolution to the spatially inhomogeneous steady state, we observe patterns evolving from stripes to spots in the early stages. The shape of the spot type pattern remains unchanged in later stages of its evolution. This is validated by analyzing the $L_2$ norm of the discrete time-derivatives of the numerical solutions $u$ and $v$ as shown in Fig.  \ref{Turing1} (h). 

Fig. \ref{TuringNegativeCD} presents the Turing type of pattern formation in light of conditions stated in Theorems \ref{theo1} and \ref{Maincond} using the parameters $\Delta t =0.0025$, $d=1$, $\gamma=720$, $d_u=-0.1$, $d_v=0.5$, $\alpha=0.09$ and $\beta=0.2$.  Here, we want to explore the impact of negative cross-diffusion on pattern formation. We observe the formation of a spatially inhomogeneous steady state which is time-independent in the later stages of its development. The important observation in Fig. \ref{TuringNegativeCD} is that the choice of negative cross-diffusion can give rise to Turing type of pattern formation. We observe that, once the patterns evolve from stripes to spots in the early stages, these remain unchanged as time progresses. This can be understood by considering the $L_2$ norm of the discrete time derivatives of the solutions $u$ and $v$ as shown in Fig.  \ref{TuringNegativeCD} (h). It is essential to note that,  in the absence of cross-diffusion Turing type pattern can not be obtained by the choice of self-diffusion coefficient $d=1$  alone, for this two-component reaction-diffusion system.  In the standard theory of diffusion-driven instability, one of the criticism of Turing's theory is that it requires diffusion coefficients to be of significantly different magnitudes. In particular, it requires that the inhibitor  diffuses a lot faster than the activator, resulting in what is known as the {\it long-range inhibition, short-range activation}. In our case, by adding cross-diffusion to the system, a two-component reaction-diffusion system with equal self-diffusion has the capacity to give rise to pattern formation as illustrated in the numerical simulations given by Fig. \ref{Turing1} and Fig. \ref{TuringNegativeCD}. We note that the main difference between the simulations of Fig. \ref{Turing1} and Fig. \ref{TuringNegativeCD} is that the choice of cross-diffusion coefficient is positive and negative. Both Figs. \ref{Turing1} and \ref{TuringNegativeCD} show the formation of spatially inhomogeneous steady state solutions which are time-independent at later stages.

Next, we want to demonstrate that if Theorems \ref{theo2} and \ref{Maincond} are simultaneously fulfilled, that is there exists model parameters and $m$ and $n$ such that the domain-size $\rho$ is bounded both from above and below, then hopf and/or transcritical bifurcations are completely forbidden. Only Turing patterns are allowed. In Fig. \ref{Turing2} we provide a numerical example illustrating this behaviour where we show the emergence of a Turing type pattern when model parameters are selected to fulfill conditions stated in Theorems \ref{theo2} and \ref{Maincond}. Model parameter values are selected as $\Delta t =0.0025$, $d=12$, $\gamma=270$, $d_u=1$, $d_v=1.7$, $\alpha=0.07$ and $\beta=0.45$ according to the conditions in Theorems \ref{theo2} and \ref{Maincond}. Comparing the results of Figs. \ref{Turing1} and \ref{TuringNegativeCD} with Fig. \ref{Turing2}, we observe that spot-type patterns are less abundant on the annular region. The evolution profile of the temporal stability in the dynamics is well-observed by visualising the $L_2$ norms of the discrete time-derivative of the numerical solutions of components $u$ and $v$.

As presented in the parameter regions given by Fig. \ref{HopfTransParameter}, when eigenvalues are a complex-conjugate pair with positive real parts, we expect the system to exhibit periodicity in the spatiotemporal pattern formation process.  This behavior is presented in Fig. \ref{Limitcycle1}, as a series of snapshots showing how the shape of the spot bifurcates, periodically,  in time. The alternating pattern behavior of radii of the spots, transitioning from smaller to larger, is well-captured in Fig. \ref{Limitcycle1}.  It is noted that the system parameters are selected to satisfy conditions stated in Theorems \ref{theo1} and \ref{Maincond} on $\rho$ with $\Delta t =0.0025$, $d=2.6$, $\gamma=375$, $d_u=1.6$, $d_v=0.5$, $\alpha=0.09$ and $\beta=0.1$. Spatiotemporal periodicity is shown by depicting the $L_2$ norm of the discrete time-derivative of the components $u$ and $v$. We observe the same temporal gap between the peaks of the discrete time-derivative of the solution as shown in the $L_2$ norm of the time derivatives given in Fig.  \ref{Limitcycle1} (k).  

Our next distinctive finite element simulation given in Fig. \ref{Limitcycle2} provides spatiotemporal {\it periodic} pattern formation with negative cross-diffusion as well as the special case of self-diffusion coefficient $d=1$ which demonstrates the effect of the cross-diffusion.  Fig. \ref{Limitcycle2} presents the finite element simulations when eigenvalues are a complex-conjugate pair with a positive real part as shown in Fig. \ref{HopfTransParameter}.  It is noted that the system parameters are selected to satisfy conditions of Theorems \ref{theo1} and \ref{Maincond} on $\rho$ with $\Delta t =0.0025$, $d=1$, $\gamma=250$, $d_u=-0.9$, $d_v=0.55$, $\alpha=0.085$ and $\beta=0.1$. The alternating pattern behavior of standing waves of spots type that transition from smaller to larger, is well-captured in Figs. \ref{Limitcycle1} and \ref{Limitcycle2}. 
Spatiotemporal periodicity is shown by the plot of the $L_2$ norm of the discrete time-derivative of the components $u$ and $v$.
We observe the same temporal gap with different amplitudes from the $L_2$ norm of the time derivatives given   in Fig.  \ref{Limitcycle2} (k).  In  Fig. \ref{Limitcycle3}, we provide the 3D views of the simulations presented in Fig. \ref{Limitcycle2}. Now,  with the 3D views of the simulations, we can capture how the system dynamics evolve both in space and time, resulting in what are known as "standing waves". Lastly, Fig. \ref{NegativeCD} illustrates the relationship between the plot of the $L_2$ norm for the discrete time-derivative and the selected simulations,  exhibiting the periodicity of the evolution of the spatiotemporal pattern. Note that Figs. \ref{Limitcycle2},  \ref{Limitcycle3} and \ref{NegativeCD} represent the same finite element simulation with the same parameter values but presented in different ways for better visualisation and interpretation.

\begin{figure}[H]
\begin{tabular}{cc}
\includegraphics[width=0.28\linewidth]{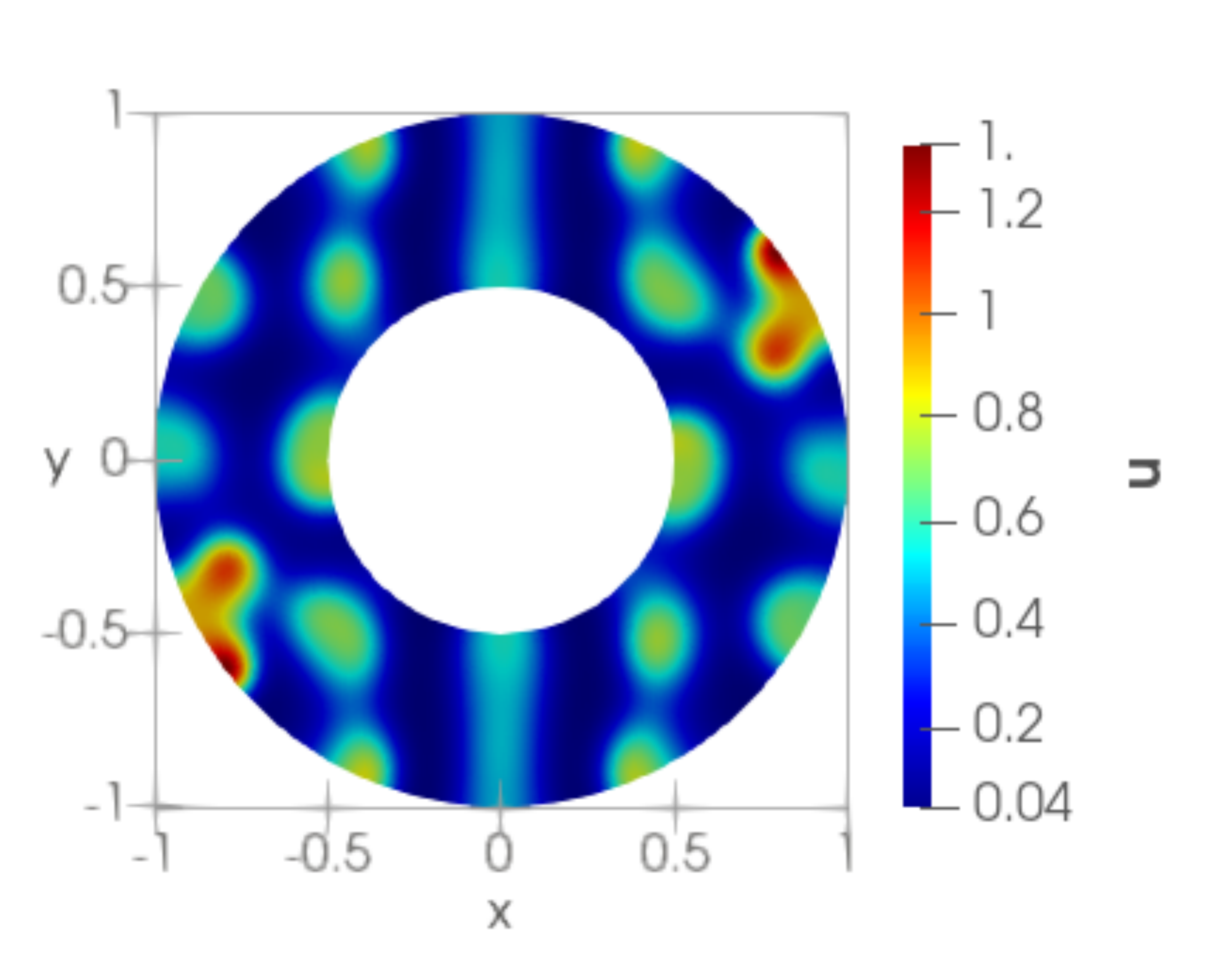} \\
{\small (a) t=0.04}
\end{tabular}
\begin{tabular}{cc}
\includegraphics[width=0.28\linewidth]{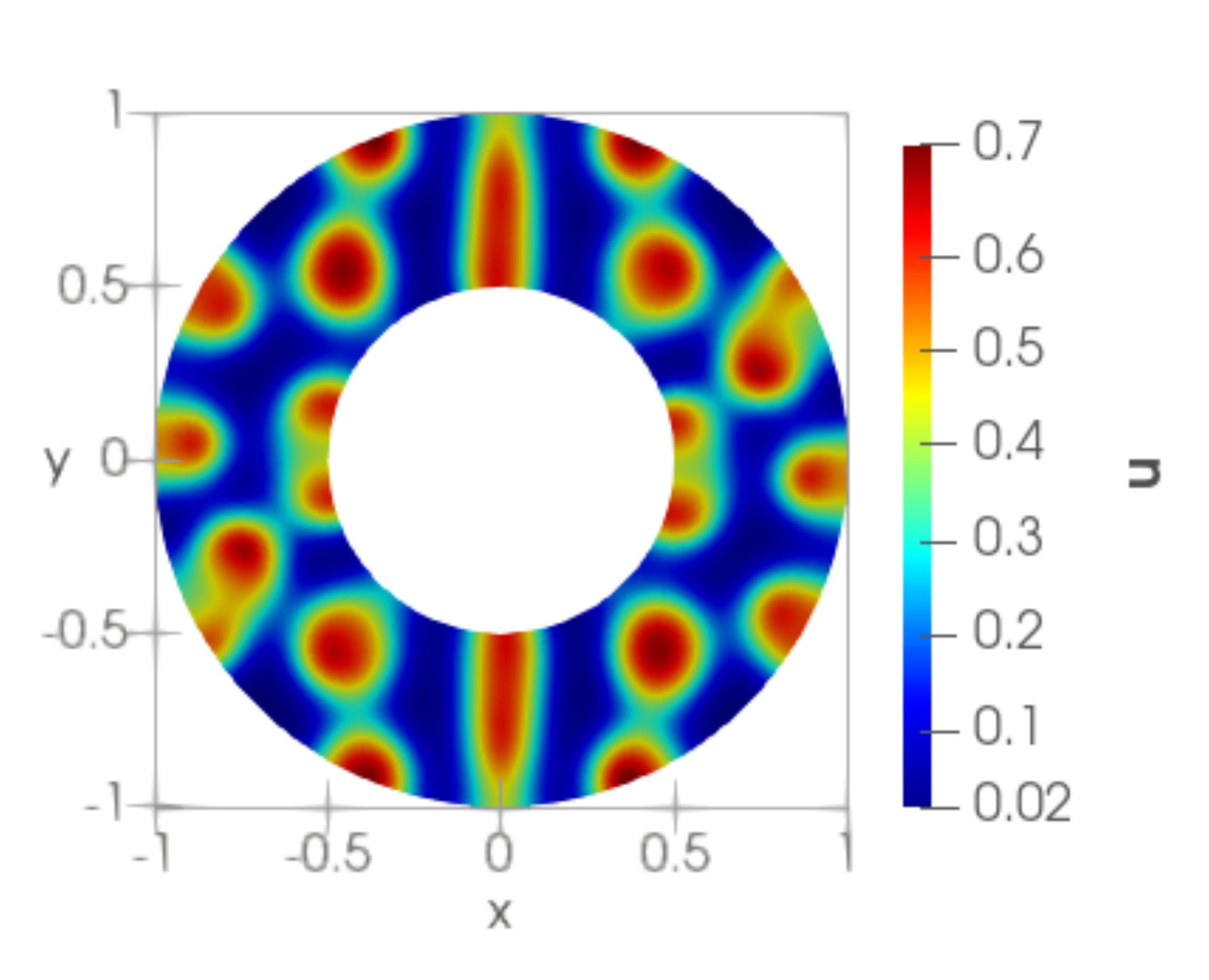} \\
{\small (b) t=0.075}
\end{tabular}
\begin{tabular}{cc}
\includegraphics[width=0.3\linewidth]{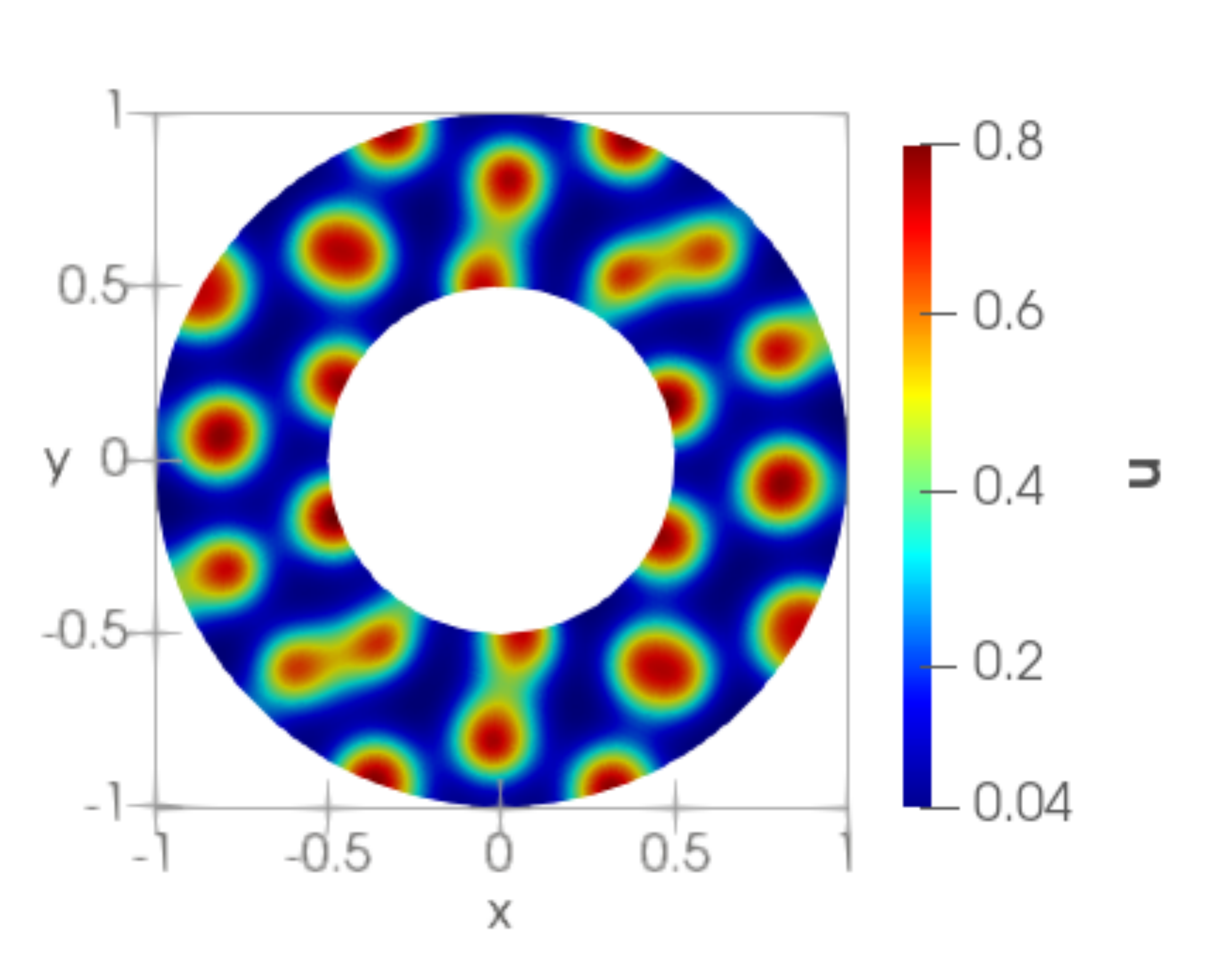} \\
{\small (c) t=0.25}
\end{tabular}
\begin{tabular}{cc}
\includegraphics[width=0.3\linewidth]{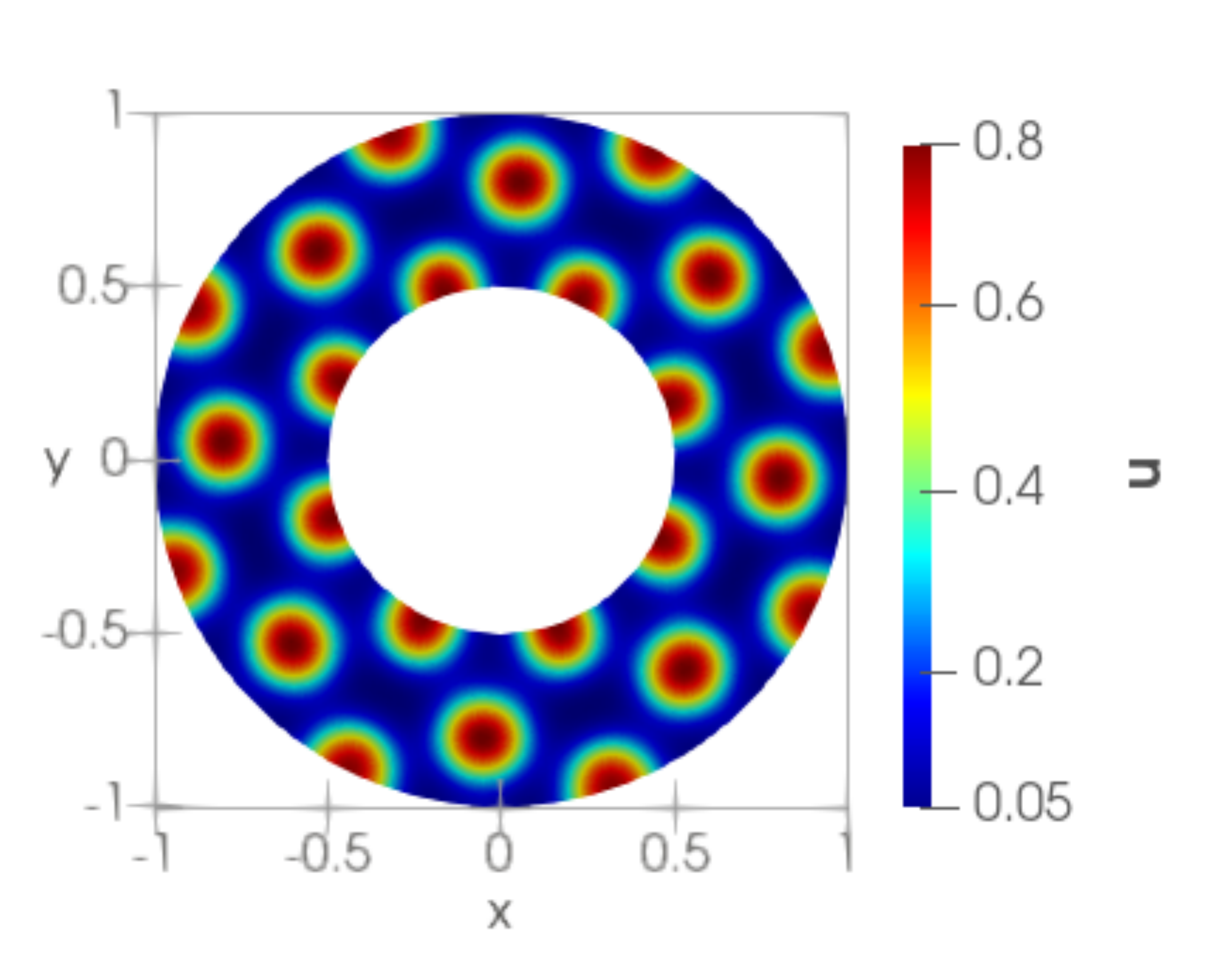} \\
{\small (d) t=0.8}
\end{tabular}
\begin{tabular}{cc}
\includegraphics[width=0.3\linewidth]{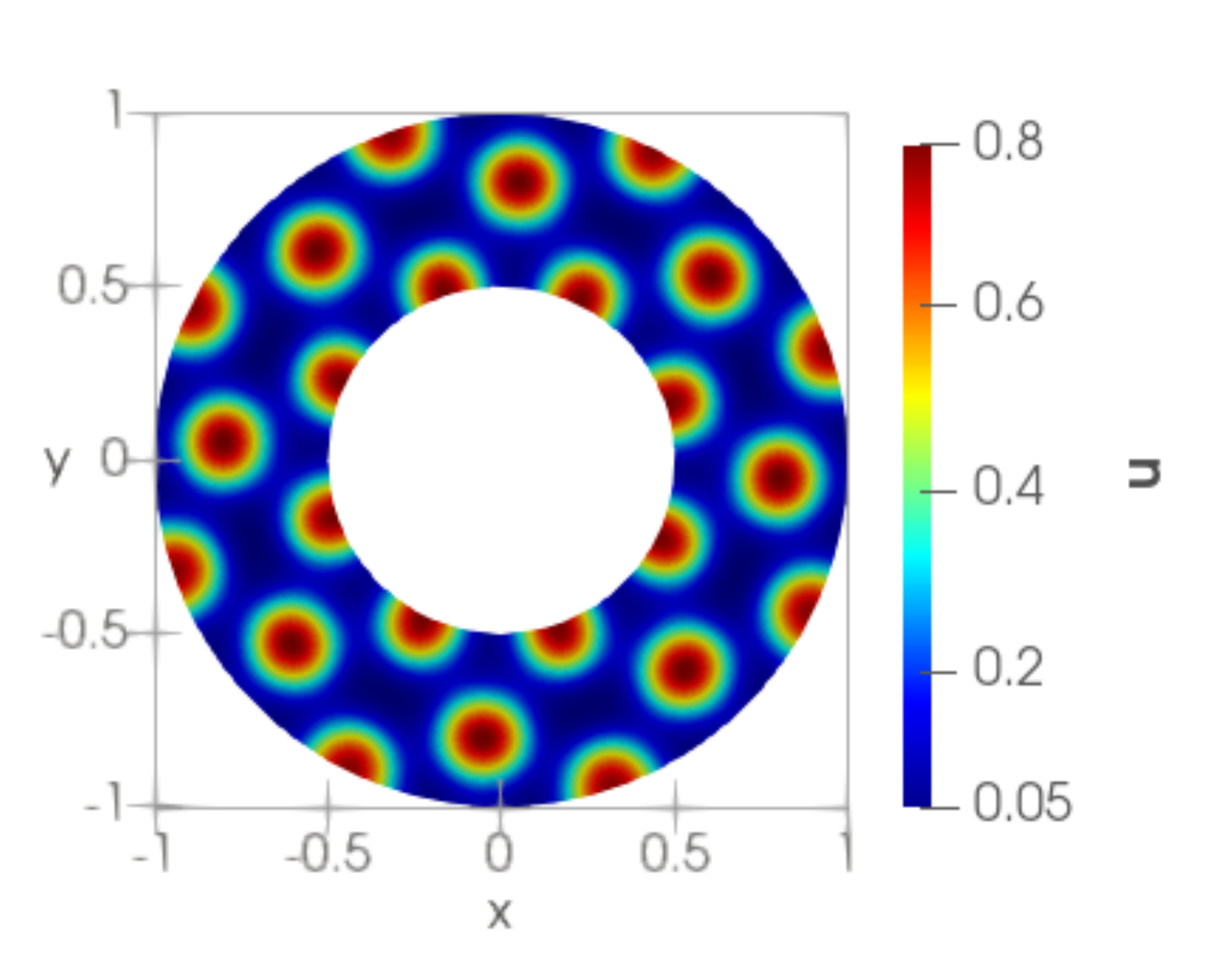} \\
{\small (e) t=1}
\end{tabular}
\begin{tabular}{cc}
\includegraphics[width=0.3\linewidth]{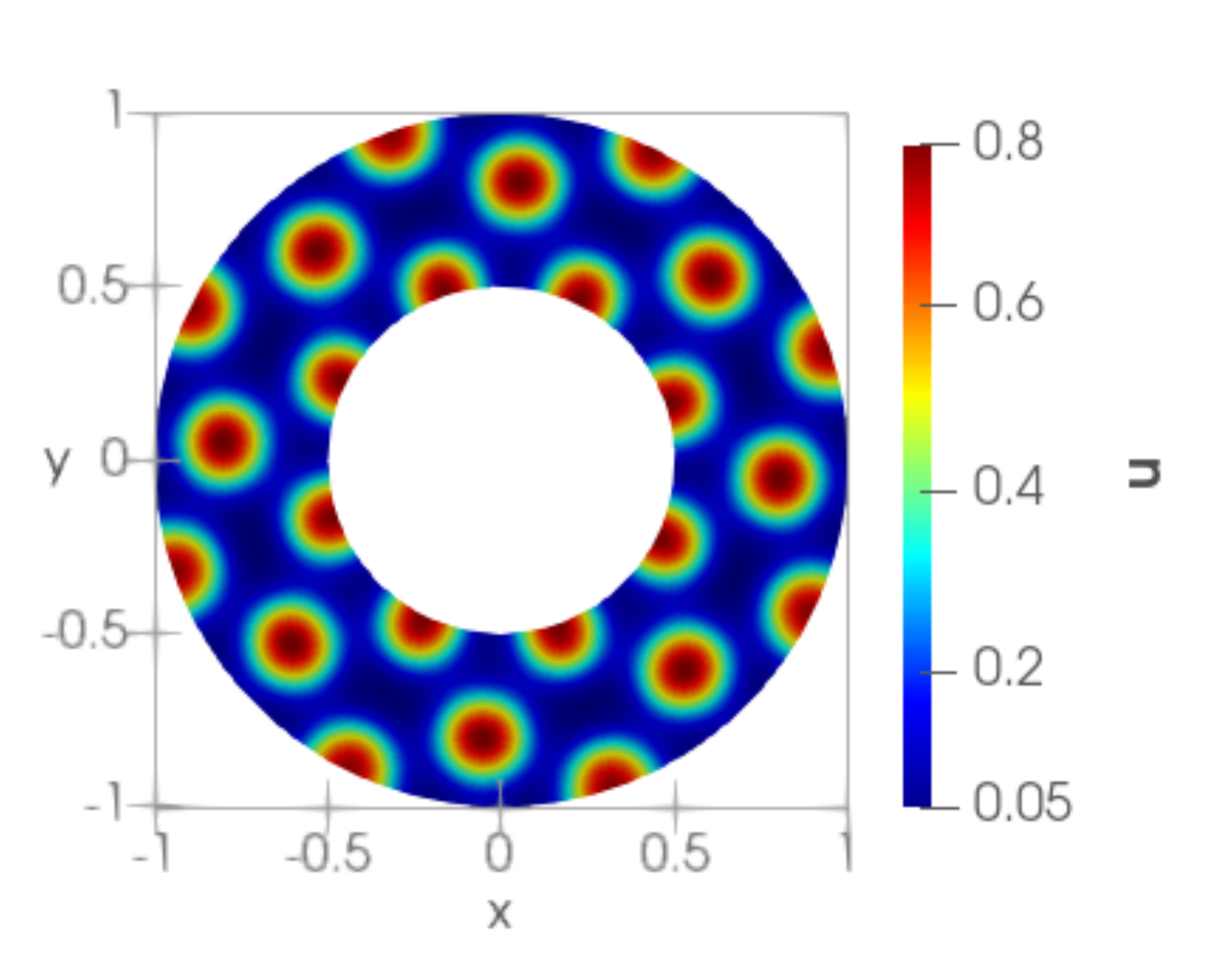} \\
{\small (f) t=1.5}
\end{tabular}
\begin{tabular}{cc}
\includegraphics[width=0.3\linewidth]{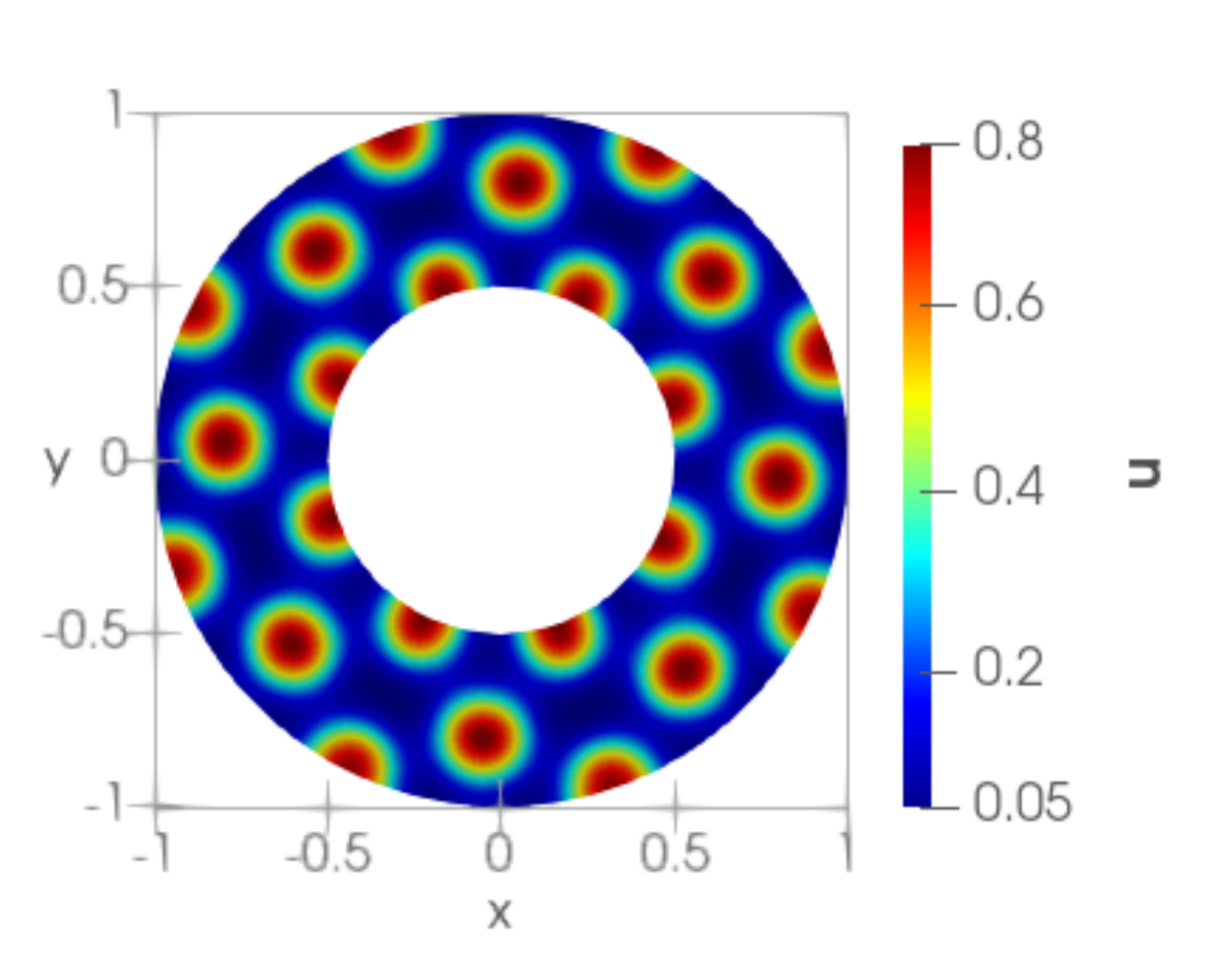} \\
{\small (g) t=2}
\end{tabular}
\begin{tabular}{cc}
\includegraphics[width=0.64\textwidth]{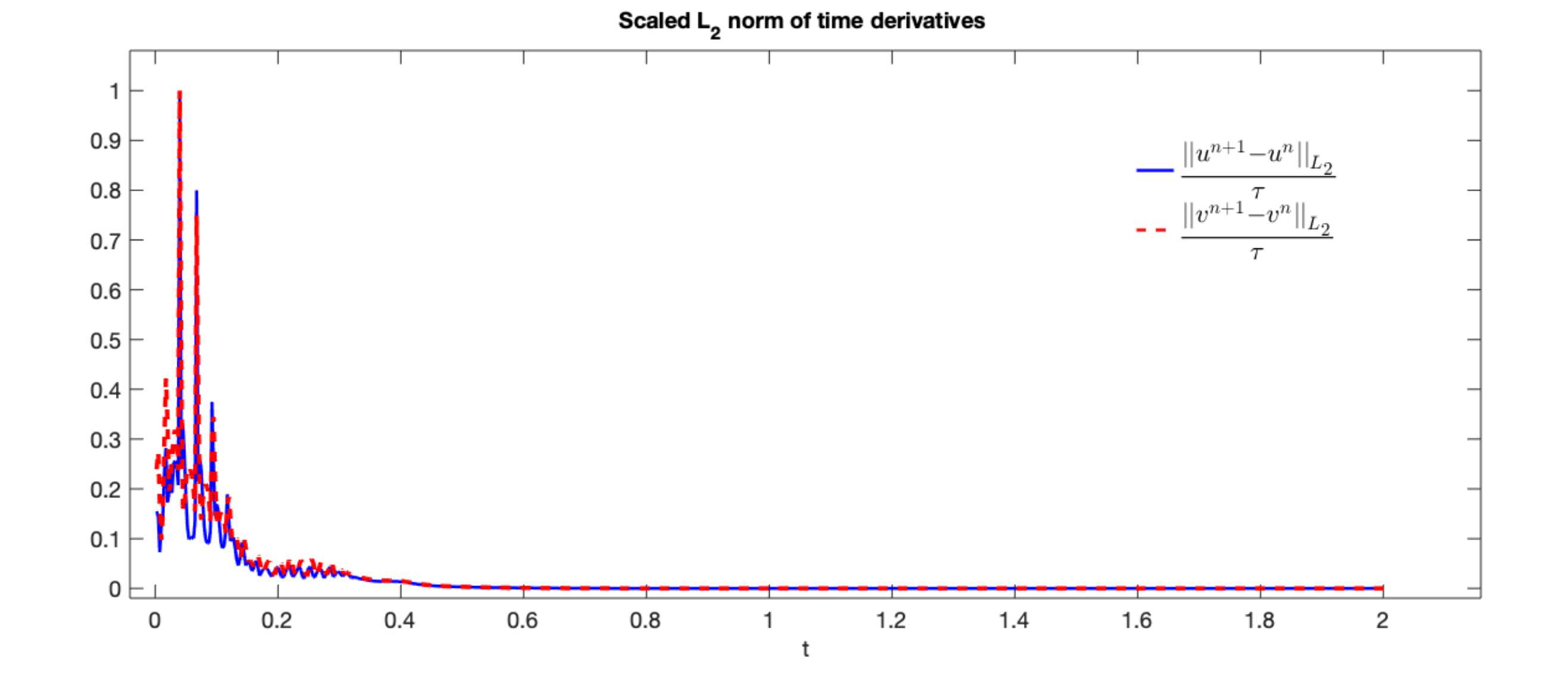} \\
{\small (h) Discrete time derivative of the solutions $u$ and $v$}
\end{tabular}
\caption{(a)-(g) Finite element simulations corresponding to the $u$-component of the cross-diffusive reaction-diffusion system on annulus showing the transient process to a spatially inhomogeneous, and time-independent pattern. Parameters are selected to satisfy conditions of Theorems \ref{theo1} and \ref{Maincond} with $\Delta t =0.0025$, $d=1$, $\gamma=730$, $d_u=0.001$, $d_v=0.46$, $\alpha=0.09$ and $\beta=0.2$, as shown in Fig. \ref{TuringTheo1} (h) Plot of the $L_2$ norms showing the convergence of the discrete time-derivative of the numerical solutions $u$ and $v$.}
\label{Turing1}
\end{figure}

\begin{figure}[H]
\begin{tabular}{cc}
\includegraphics[width=0.28\linewidth]{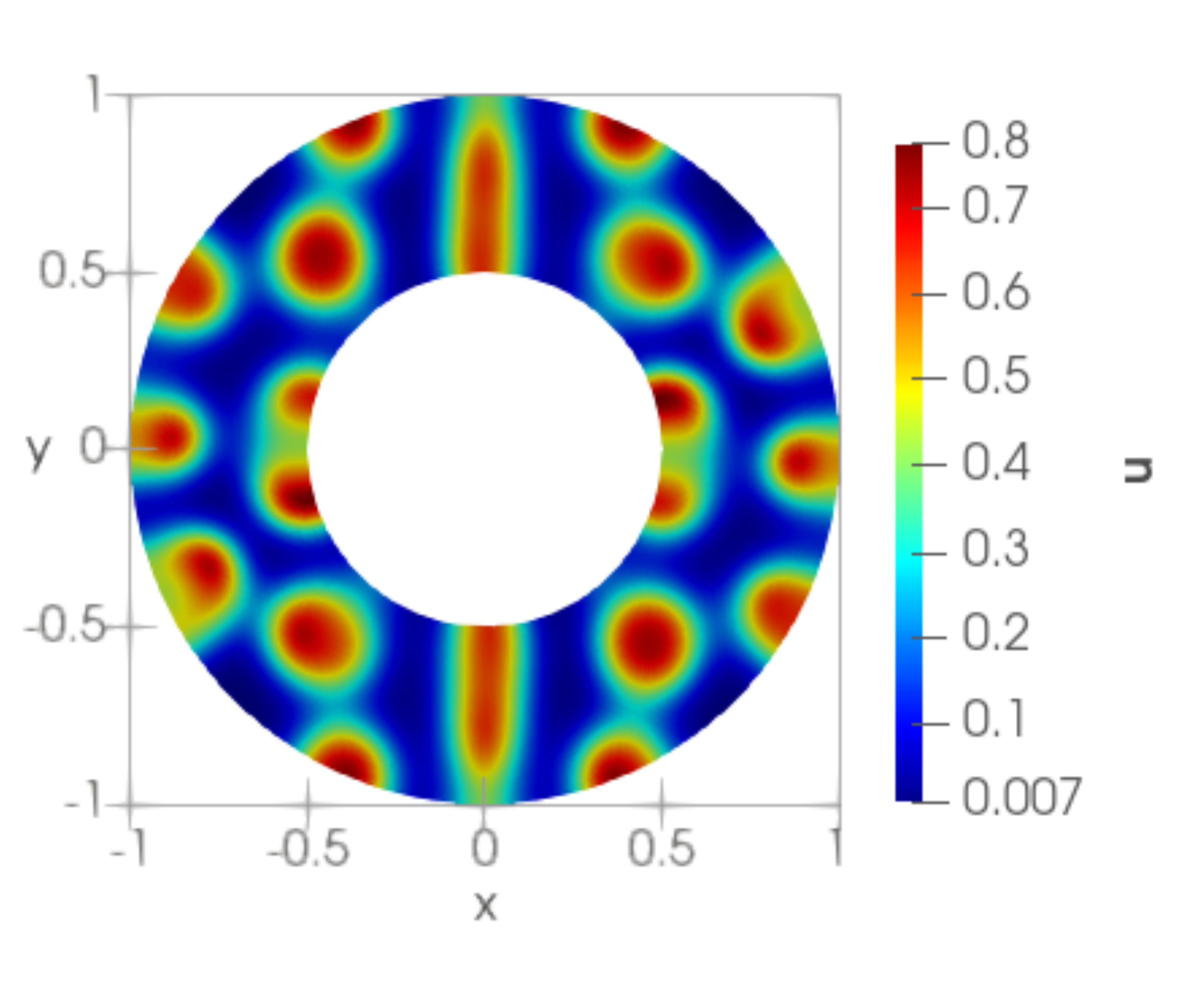} \\
{\small (a) t=0.075}
\end{tabular}
\begin{tabular}{cc}
\includegraphics[width=0.28\linewidth]{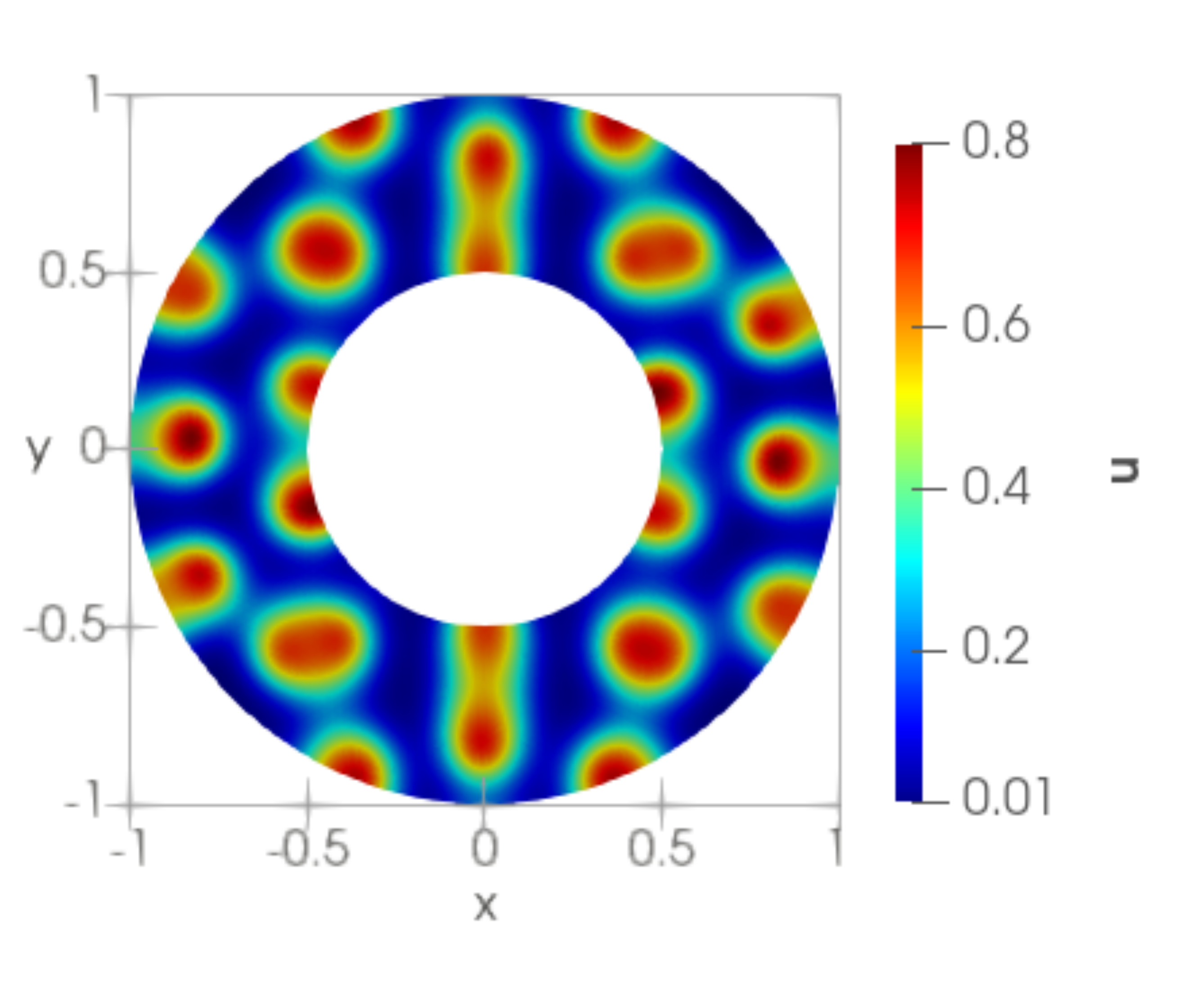} \\
{\small (b) t=0.1}
\end{tabular}
\begin{tabular}{cc}
\includegraphics[width=0.3\linewidth]{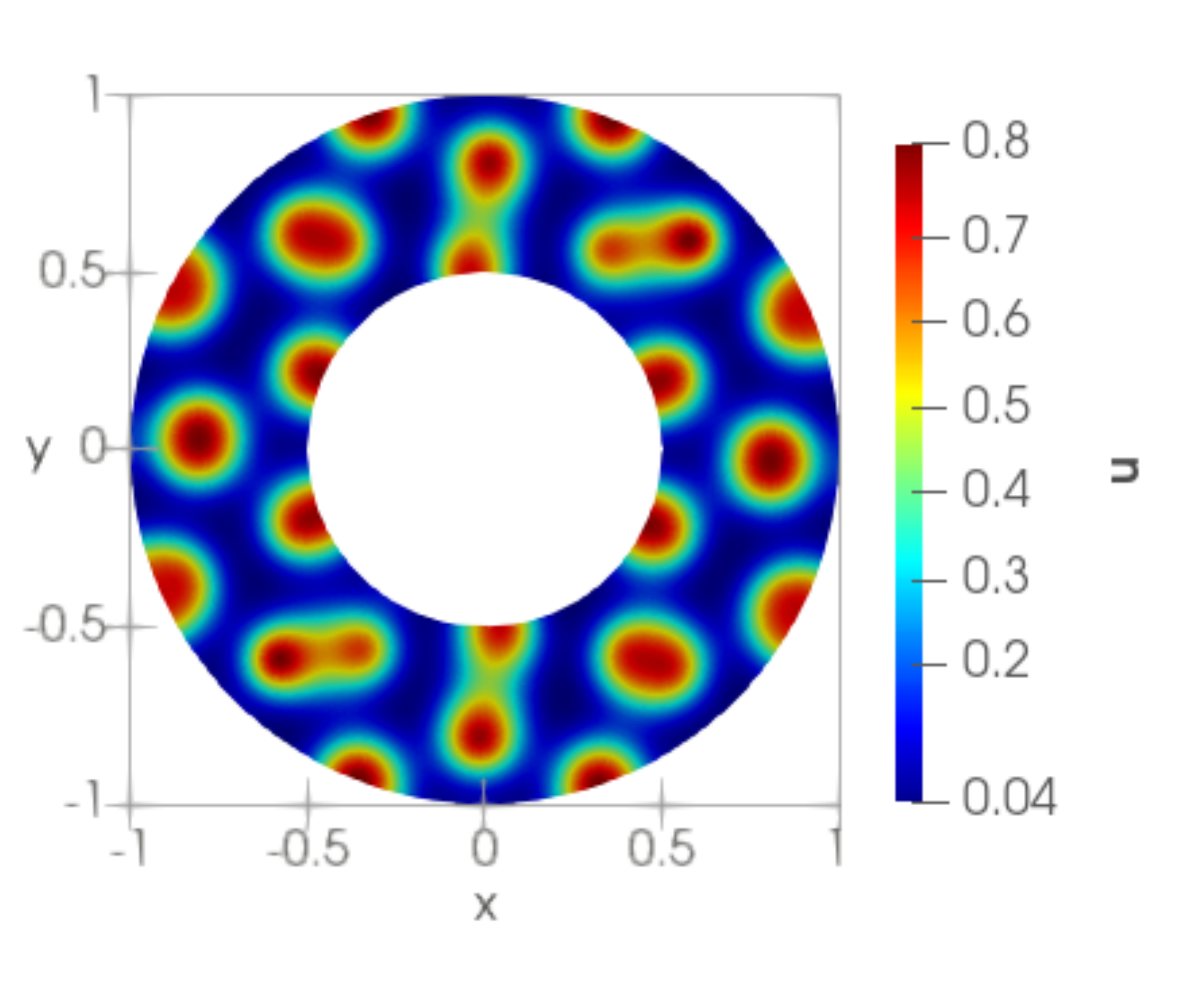} \\
{\small (c) t=0.25}
\end{tabular}
\begin{tabular}{cc}
\includegraphics[width=0.3\linewidth]{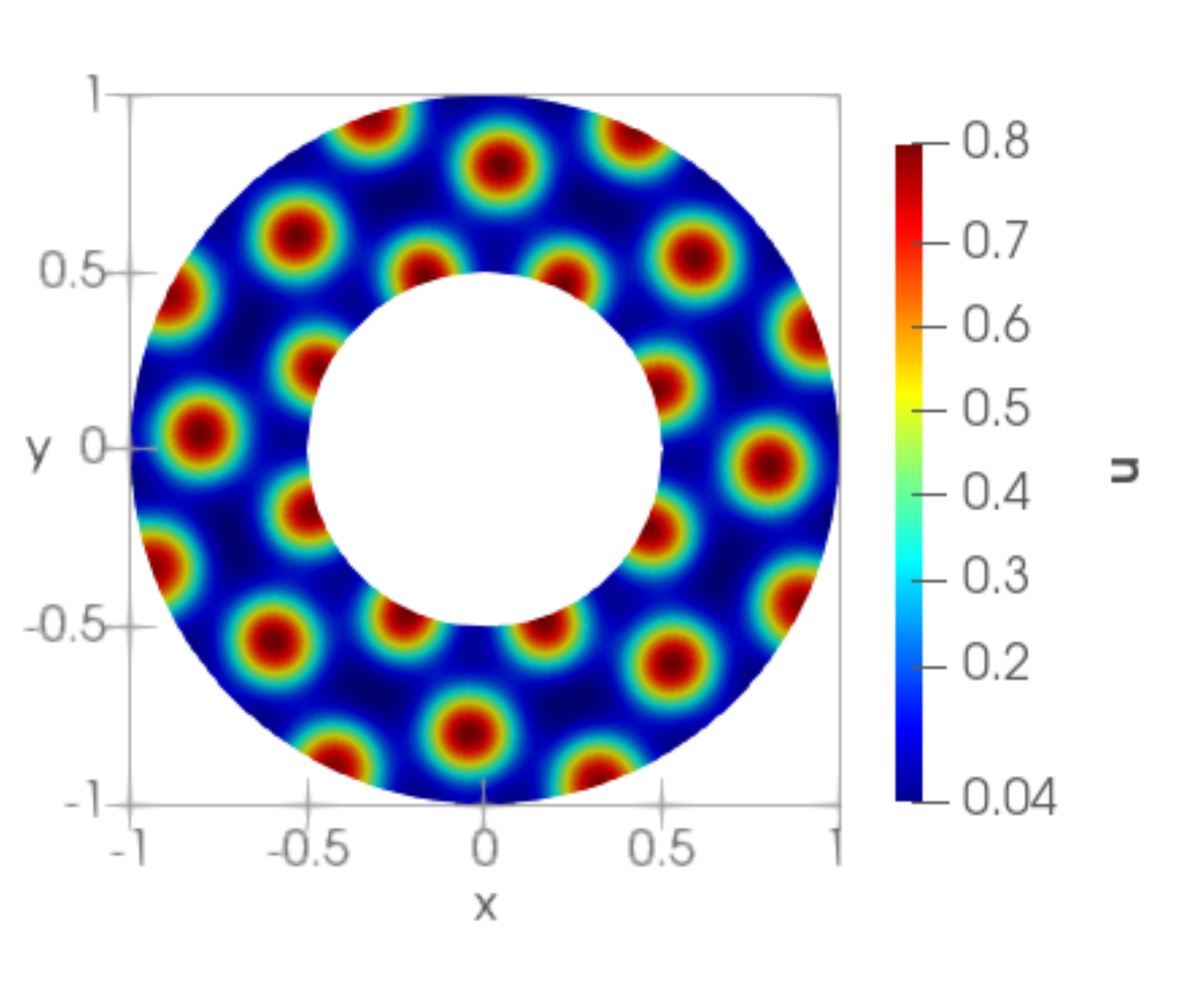} \\
{\small (d) t=0.5}
\end{tabular}
\begin{tabular}{cc}
\includegraphics[width=0.3\linewidth]{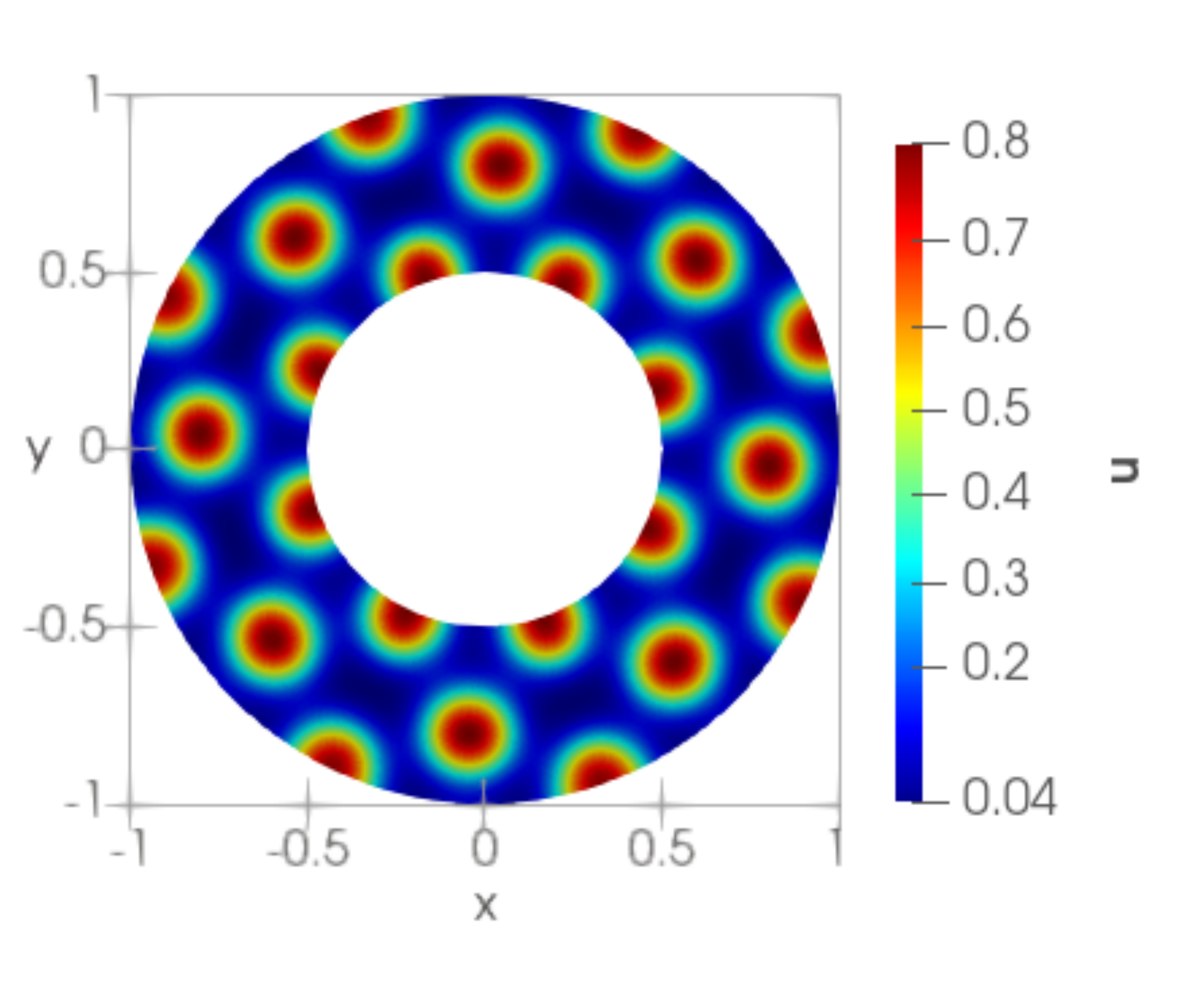} \\
{\small (e) t=1}
\end{tabular}
\begin{tabular}{cc}
\includegraphics[width=0.3\linewidth]{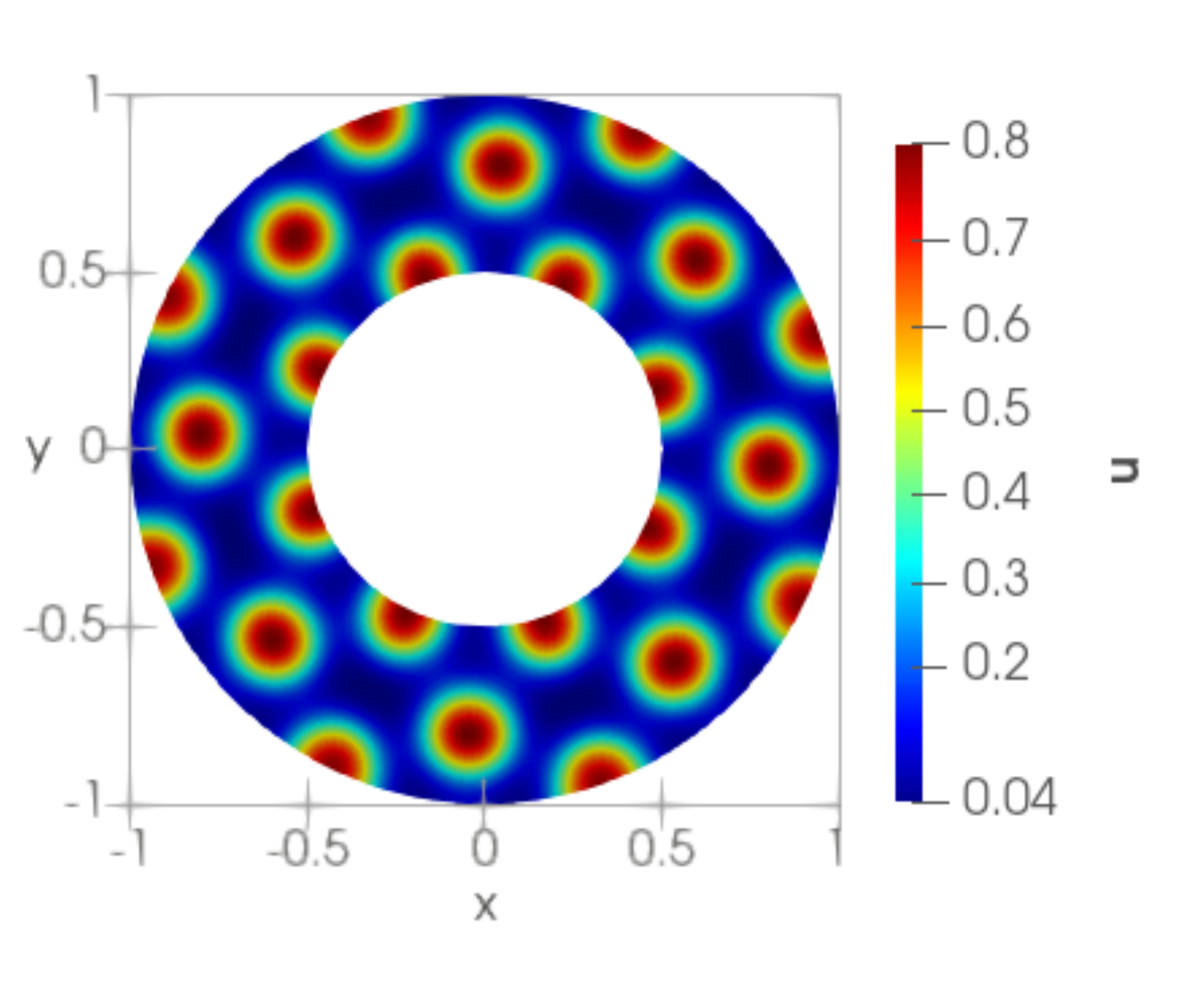} \\
{\small (f) t=1.5}
\end{tabular}
\begin{tabular}{cc}
\includegraphics[width=0.3\linewidth]{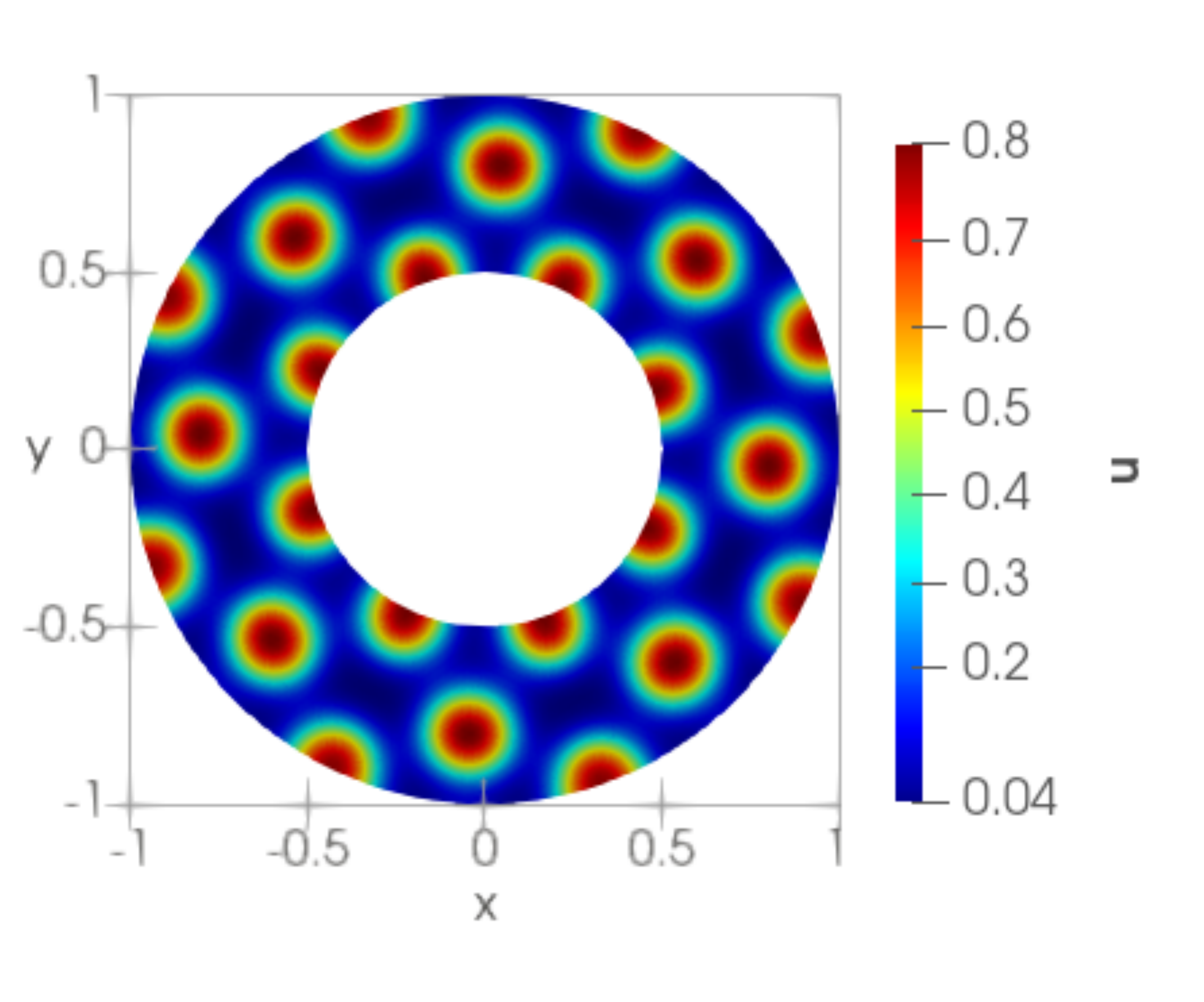} \\
{\small (g) t=2}
\end{tabular}
\begin{tabular}{cc}
\includegraphics[width=0.64\textwidth]{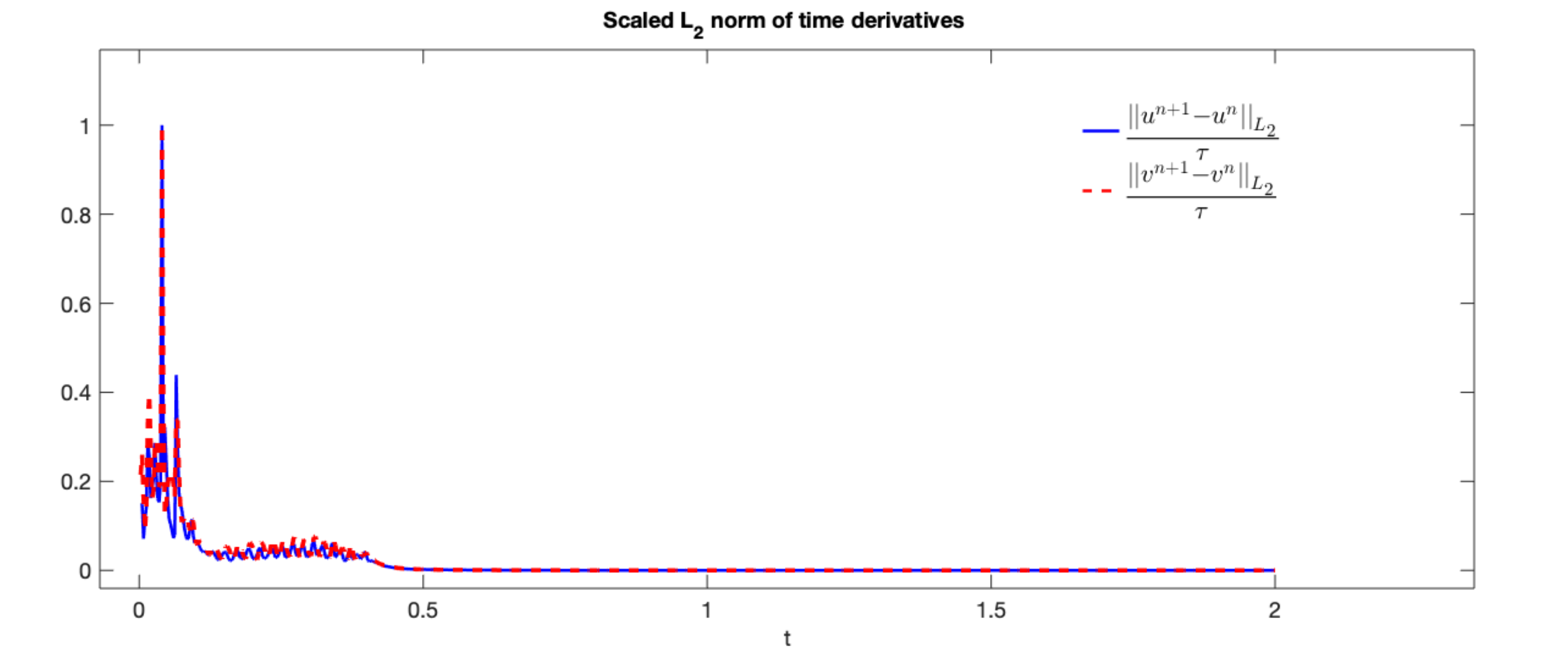} \\
{\small (h) Discrete time derivative of the solutions $u$ and $v$}
\end{tabular}
\caption{(a)-(g) Finite element simulations corresponding to the $u$-component of the cross-diffusive reaction-diffusion system on annulus domain exhibiting the transient process to a spatially inhomogeneous, and time-independent pattern. Parameters are selected to satisfy conditions of Theorems \ref{theo1} and \ref{Maincond} with $\Delta t =0.0025$, $d=1$, $\gamma=720$, $d_u=-0.1$, $d_v=0.5$, $\alpha=0.09$ and $\beta=0.2$, as shown in Fig. \ref{TuringTheo1} (h) Plot of the $L_2$ norms showing the convergence of the  discrete time-derivative of the numerical solutions $u$ and $v$.}
\label{TuringNegativeCD}
\end{figure}

\begin{figure}[H]
\begin{tabular}{cc}
\includegraphics[width=0.28\linewidth]{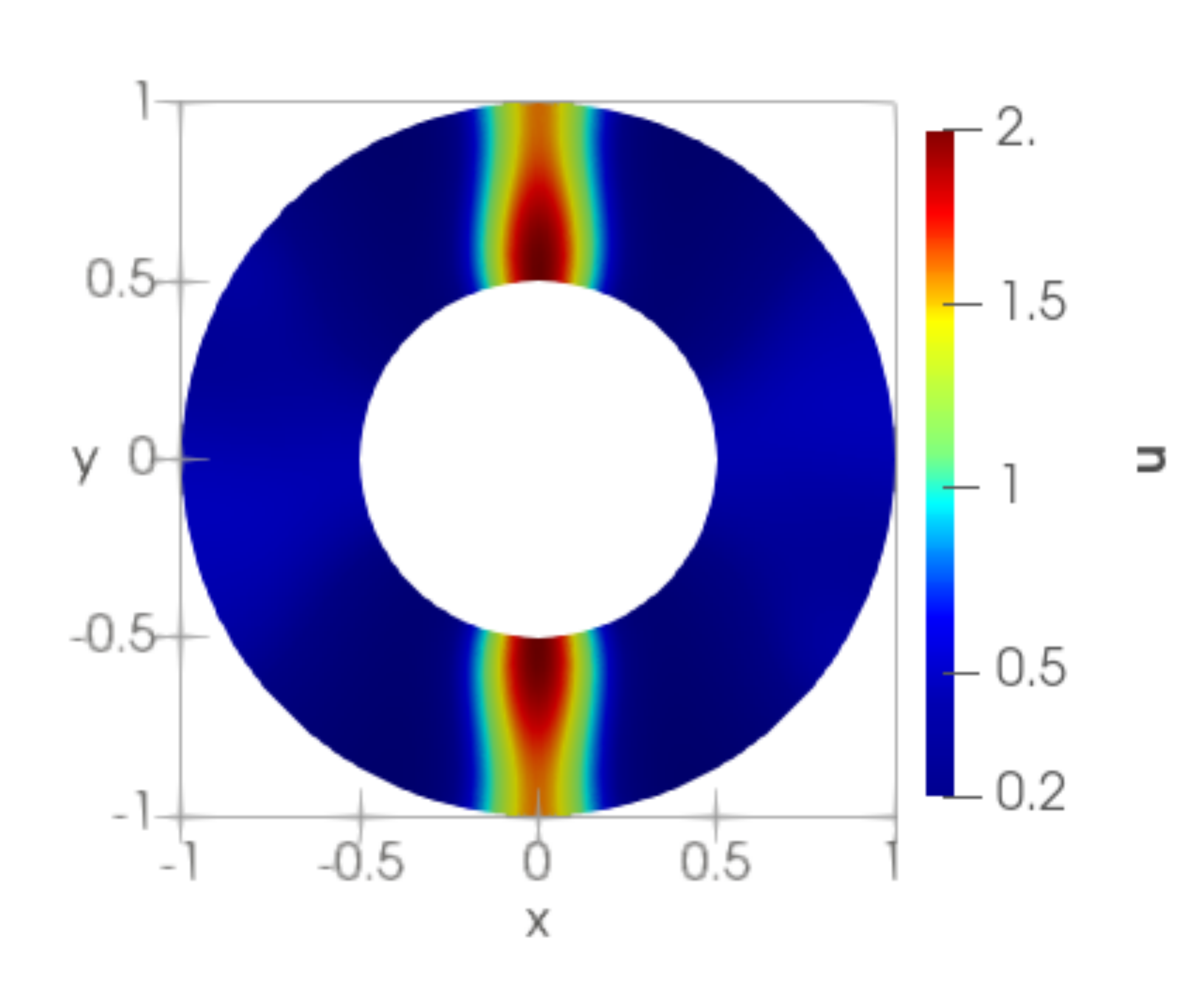} \\
{\small (a) t=0.02}
\end{tabular}
\begin{tabular}{cc}
\includegraphics[width=0.28\linewidth]{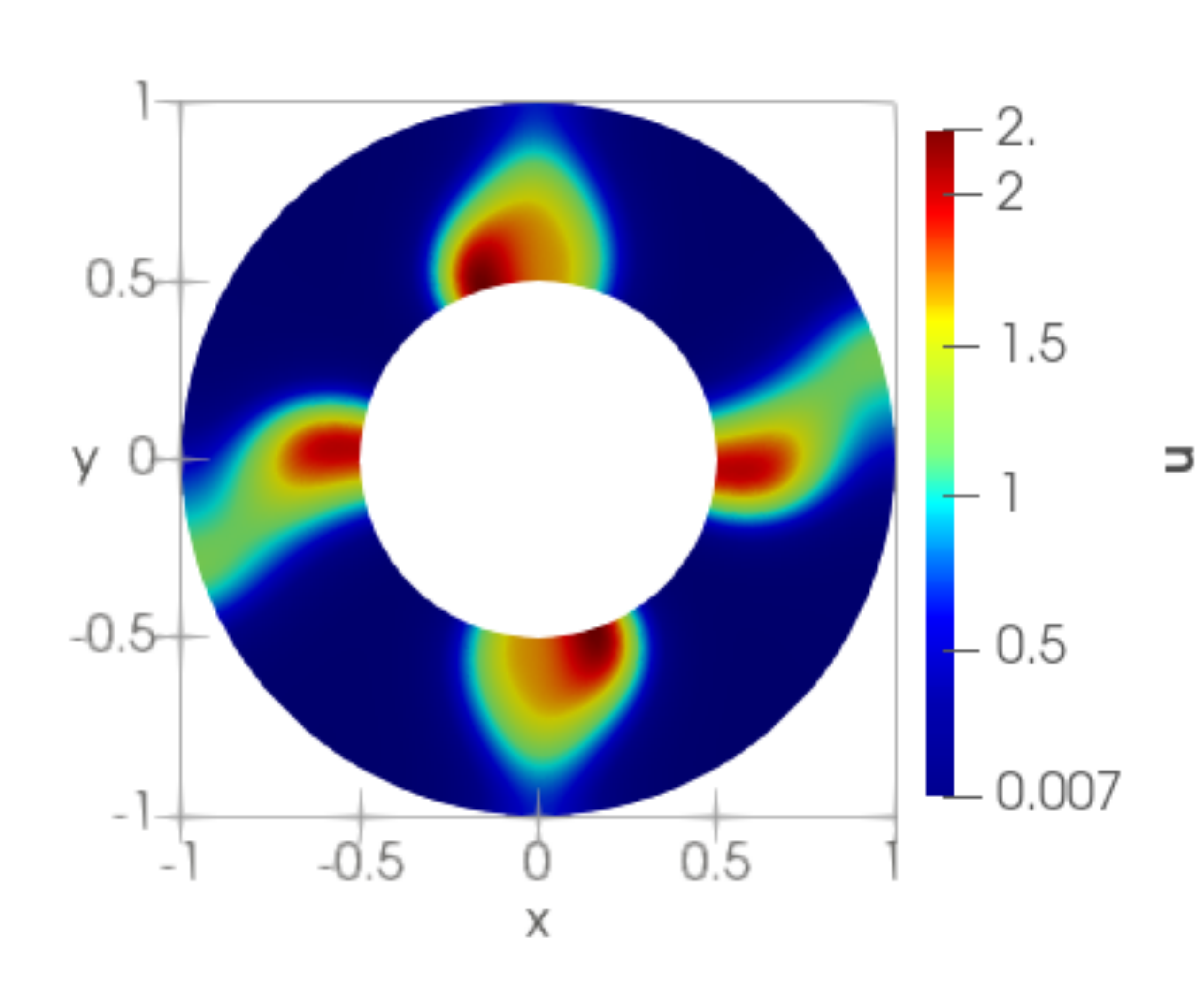} \\
{\small (b) t=0.05}
\end{tabular}
\begin{tabular}{cc}
\includegraphics[width=0.3\linewidth]{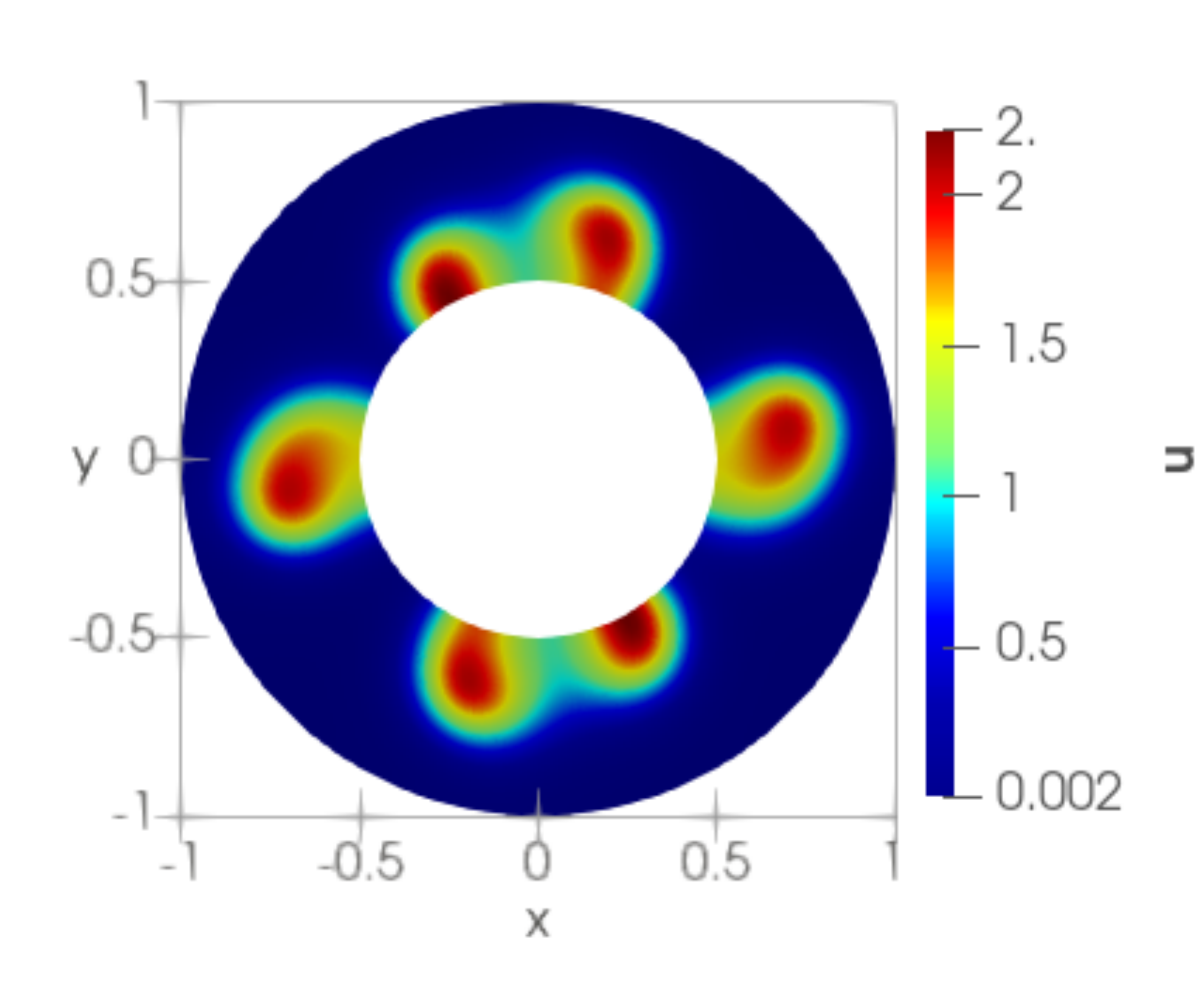} \\
{\small (c) t=0.1}
\end{tabular}
\begin{tabular}{cc}
\includegraphics[width=0.3\linewidth]{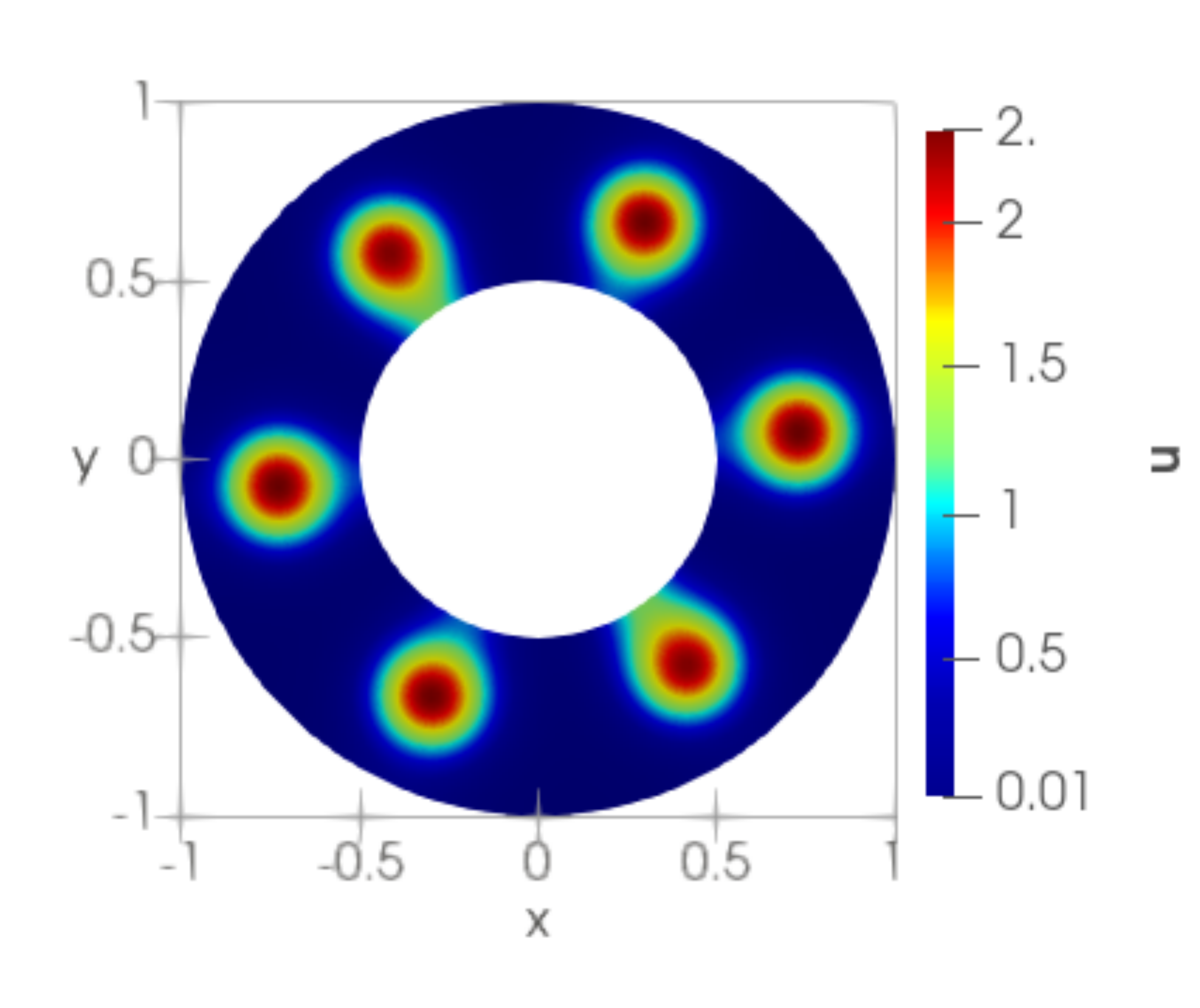} \\
{\small (d) t=0.25}
\end{tabular}
\begin{tabular}{cc}
\includegraphics[width=0.3\linewidth]{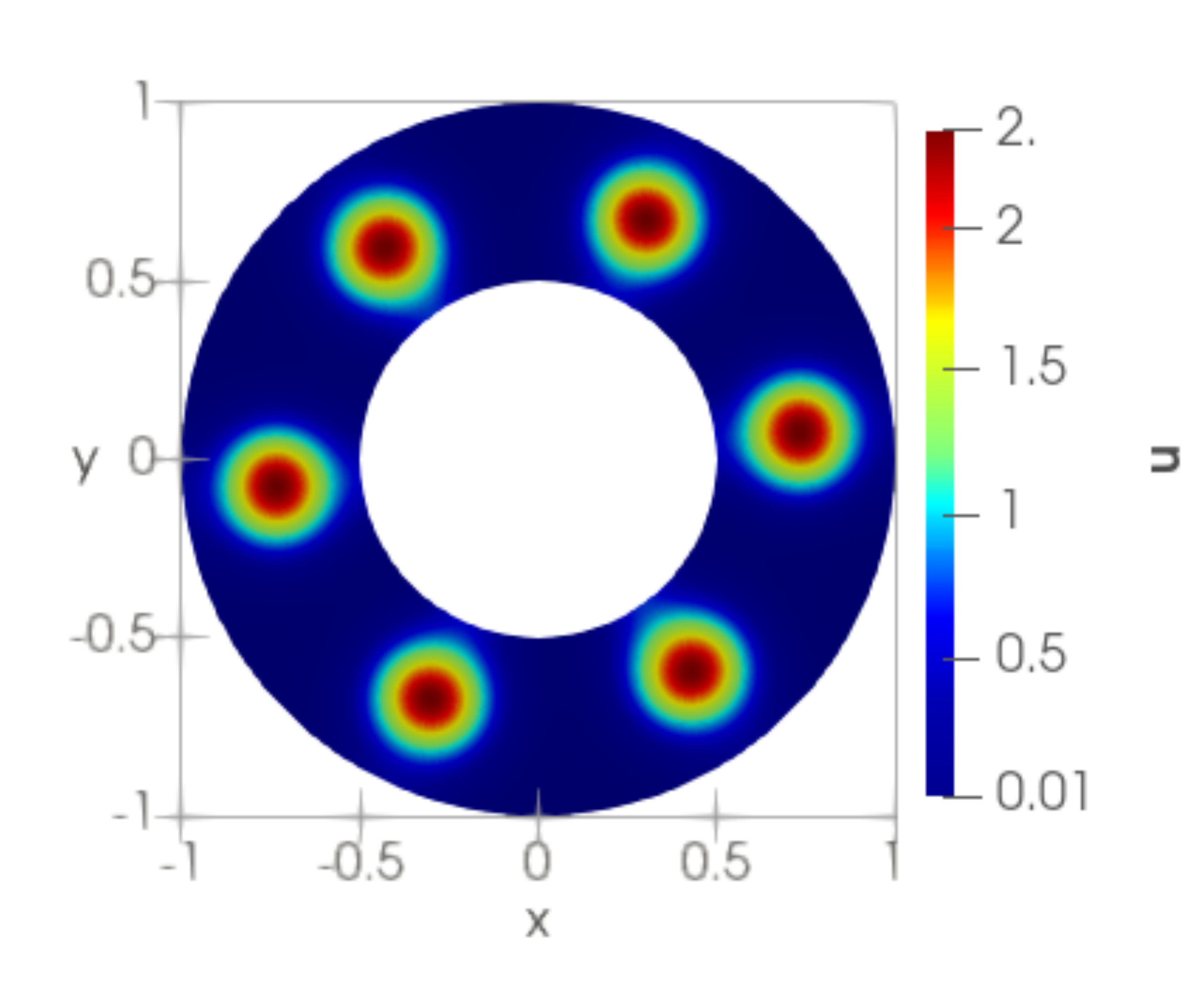} \\
{\small (e) t=0.5}
\end{tabular}
\begin{tabular}{cc}
\includegraphics[width=0.3\linewidth]{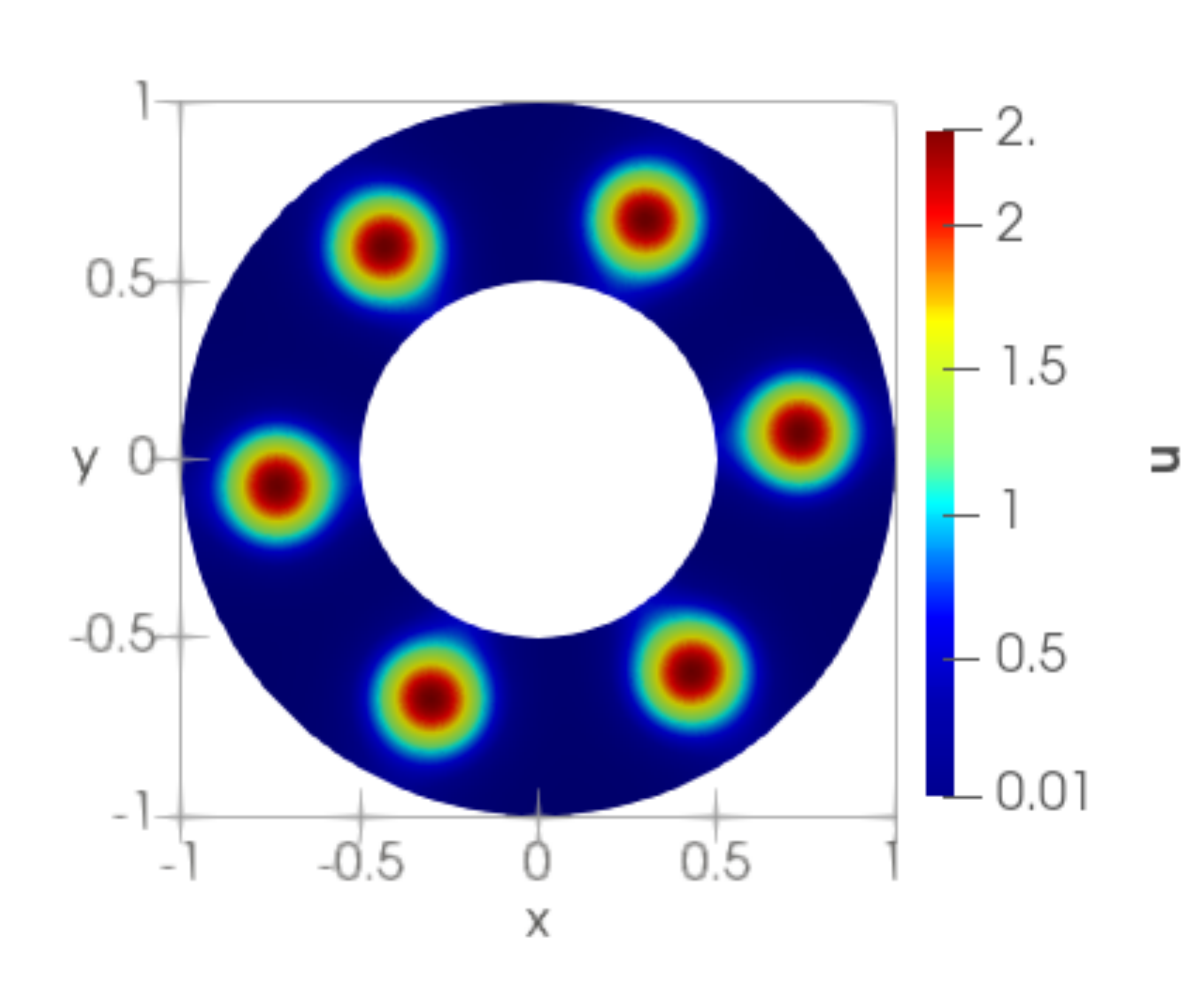} \\
{\small (f) t=1}
\end{tabular}
\begin{tabular}{cc}
\includegraphics[width=0.3\linewidth]{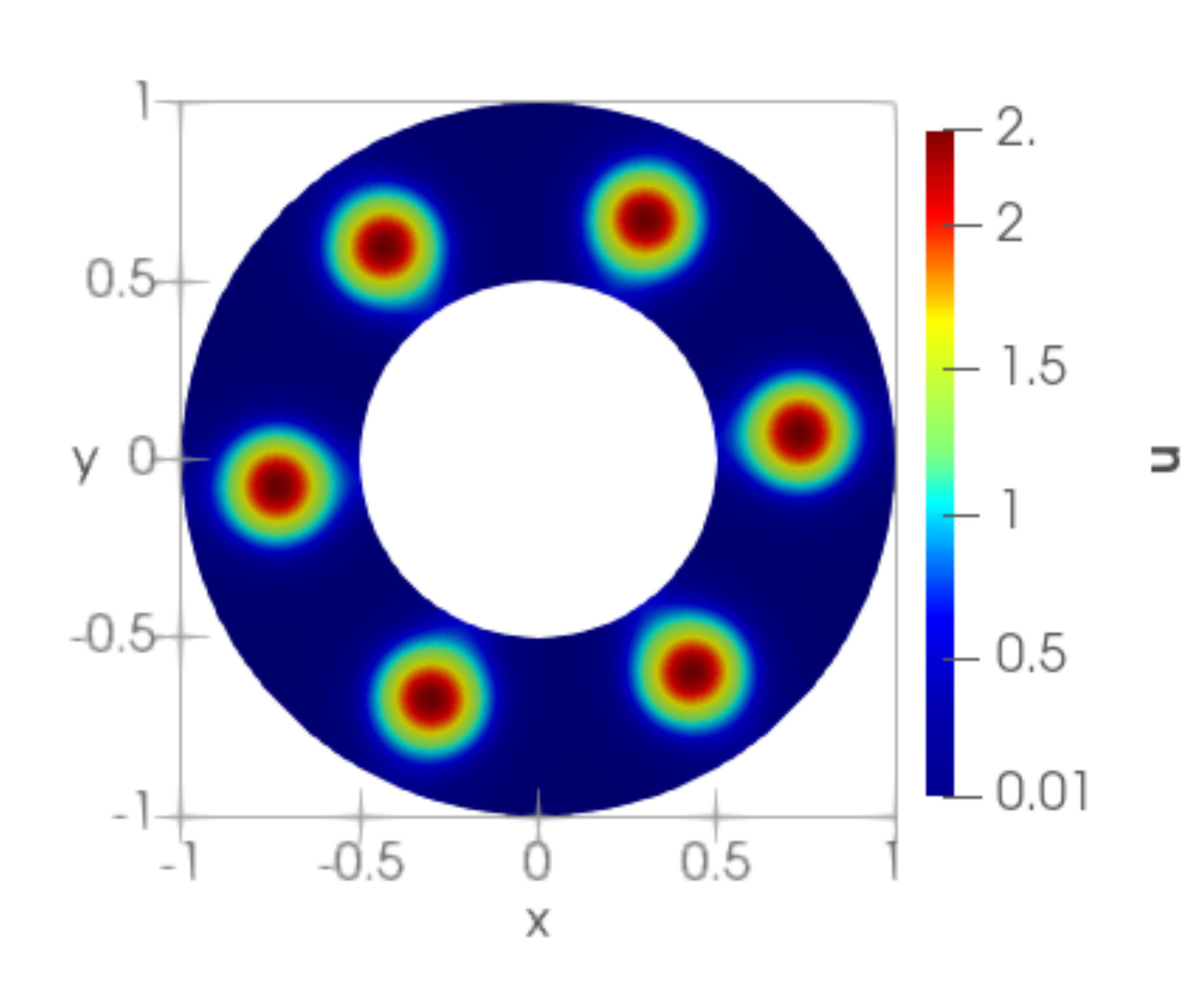} \\
{\small (g) t=2}
\end{tabular}
\begin{tabular}{cc}
\includegraphics[width=0.64\textwidth]{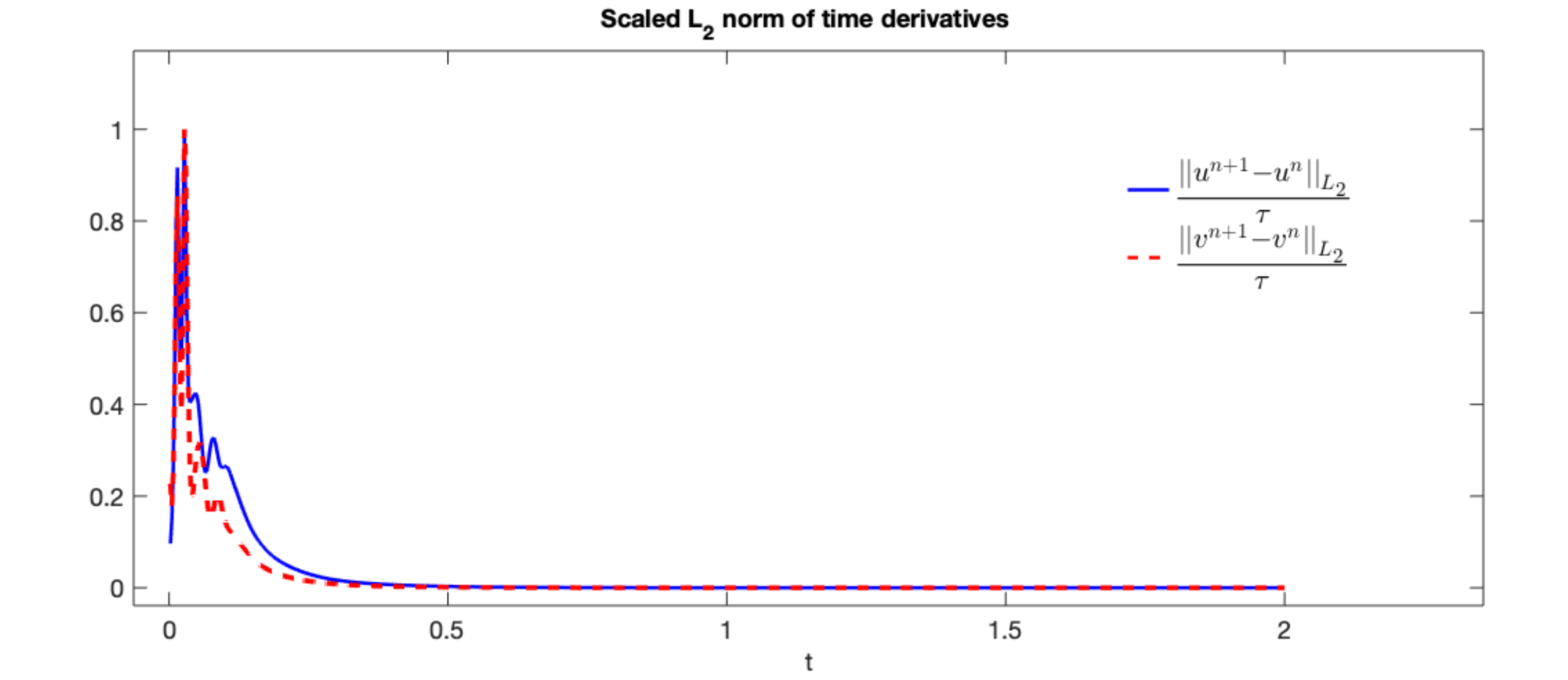} \\
{\small (h) Discrete time derivative of the solutions $u$ and $v$}
\end{tabular}
\caption{(a)-(g) Finite element simulations corresponding to the $u$-component of the cross-diffusive reaction-diffusion system on annulus domain illustrating the transient process to a spatially inhomogeneous, and time-independent pattern. Parameters are selected to satisfy conditions of Theorems \ref{theo2} and \ref{Maincond} with $\Delta t =0.0025$, $d=12$, $\gamma=270$, $d_u=1$, $d_v=1.7$, $\alpha=0.07$ and $\beta=0.45$, as shown in Fig. \ref{TuringTheo2}. (h) Plot of the $L_2$ norms showing the convergence of the  discrete time-derivative of the numerical solutions $u$ and $v$.}
\label{Turing2}
\end{figure}

\begin{figure}[H]
\begin{tabular}{cc}
\includegraphics[width=0.28\linewidth]{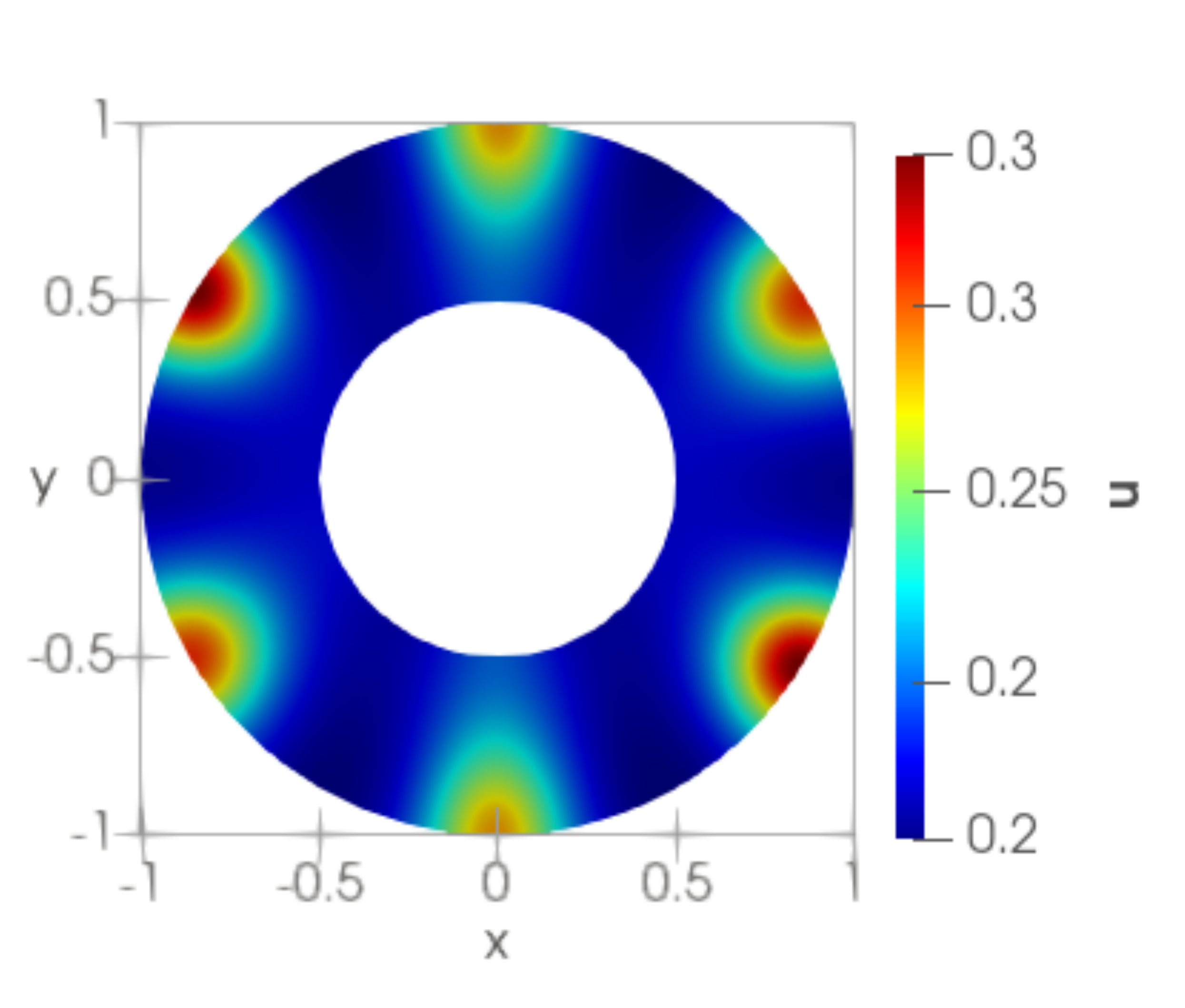} \\
{\small (a) t=0.1}
\end{tabular}
\begin{tabular}{cc}
\includegraphics[width=0.28\linewidth]{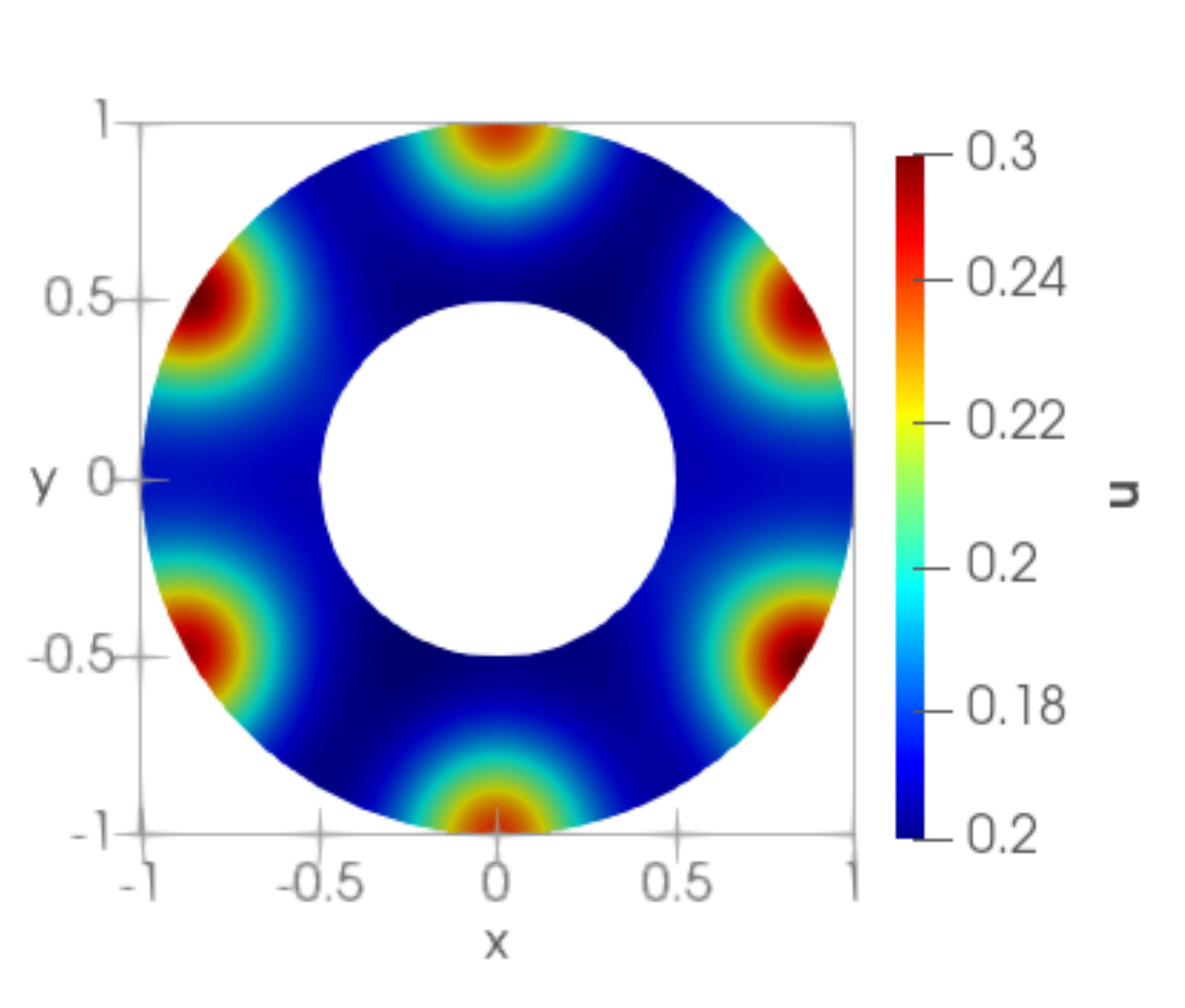} \\
{\small (b) t=0.3}
\end{tabular}
\begin{tabular}{cc}
\includegraphics[width=0.3\linewidth]{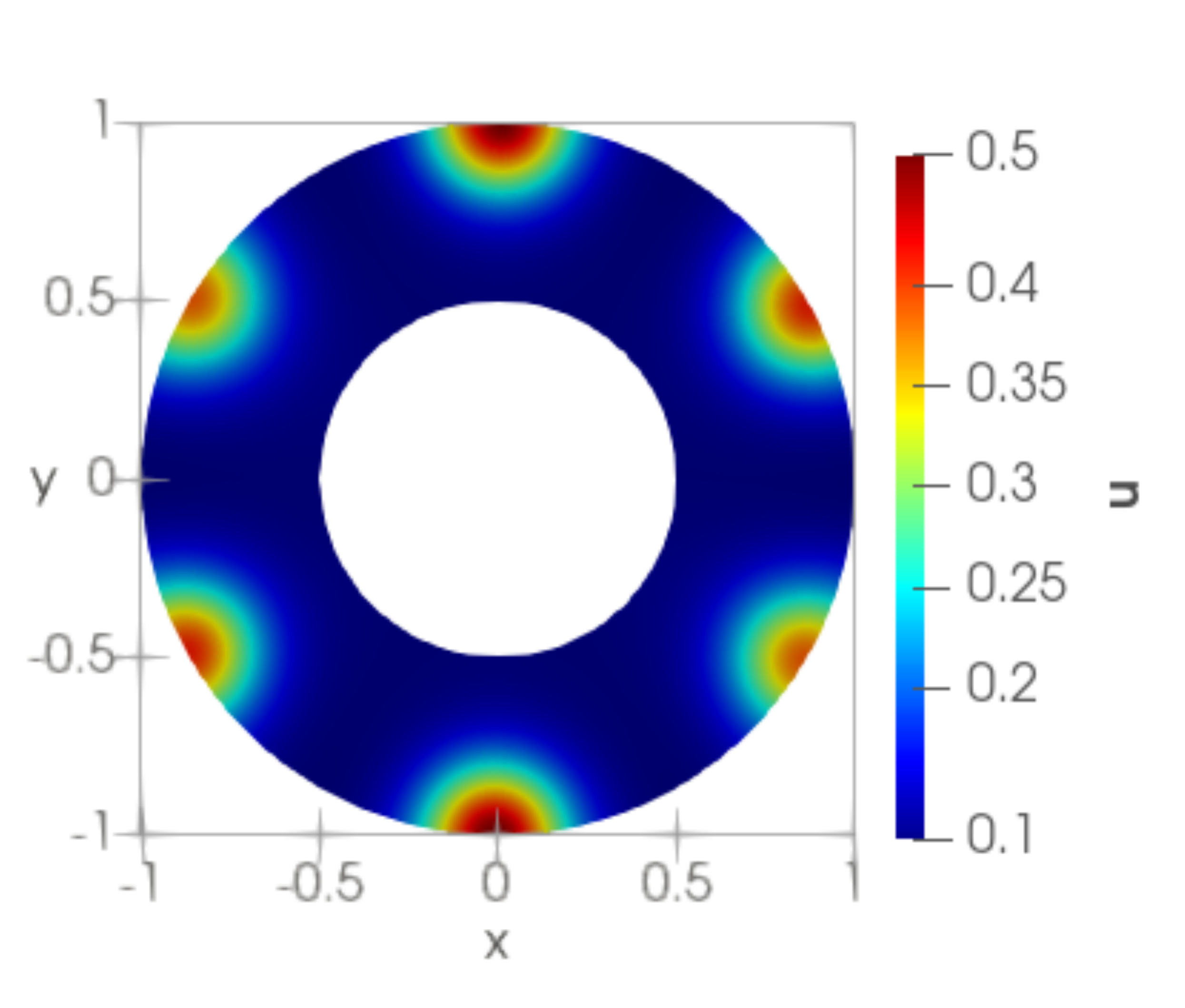} \\
{\small (c) t=0.45}
\end{tabular}
\begin{tabular}{cc}
\includegraphics[width=0.3\linewidth]{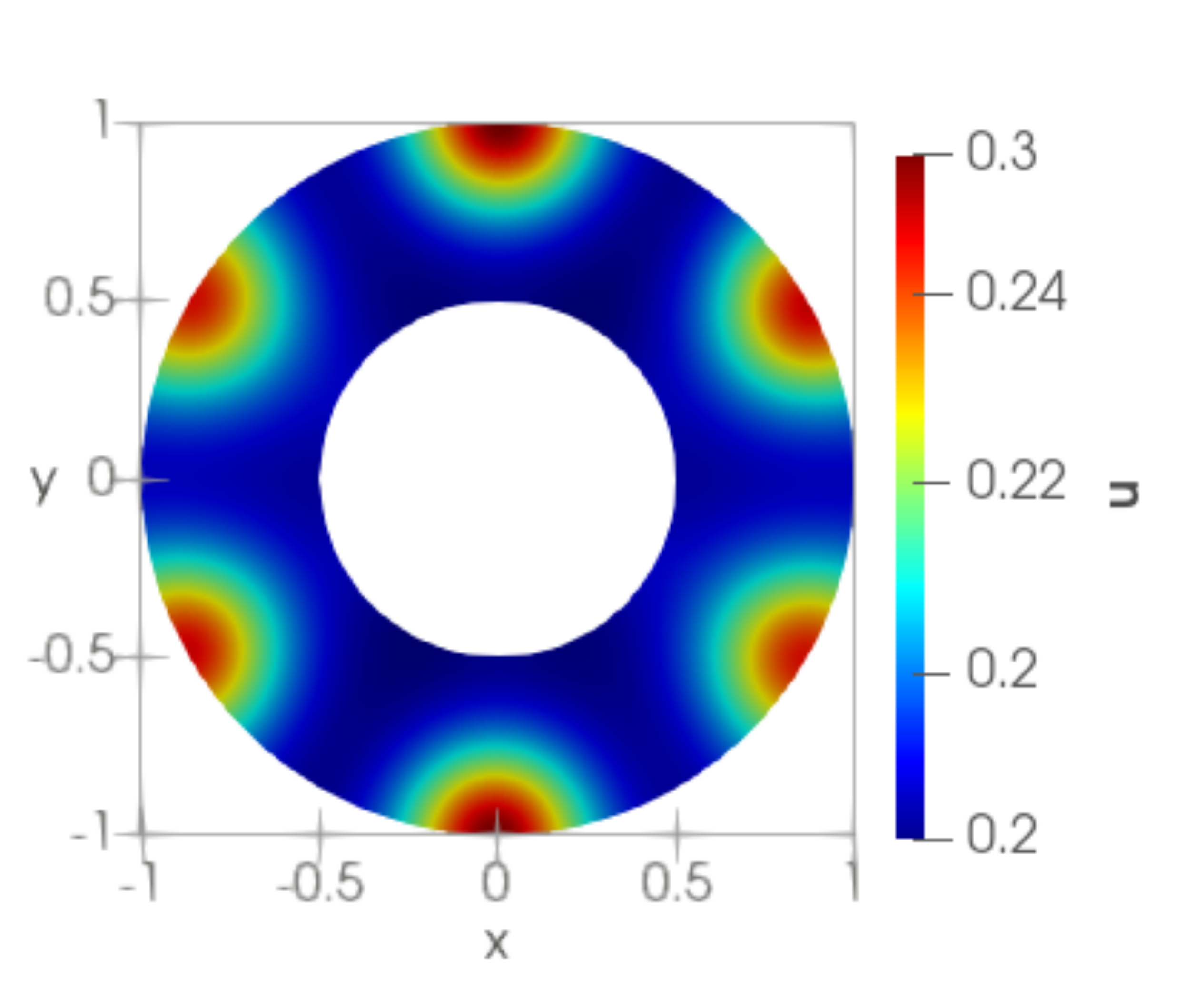} \\
{\small (d) t=0.50}
\end{tabular}
\begin{tabular}{cc}
\includegraphics[width=0.3\linewidth]{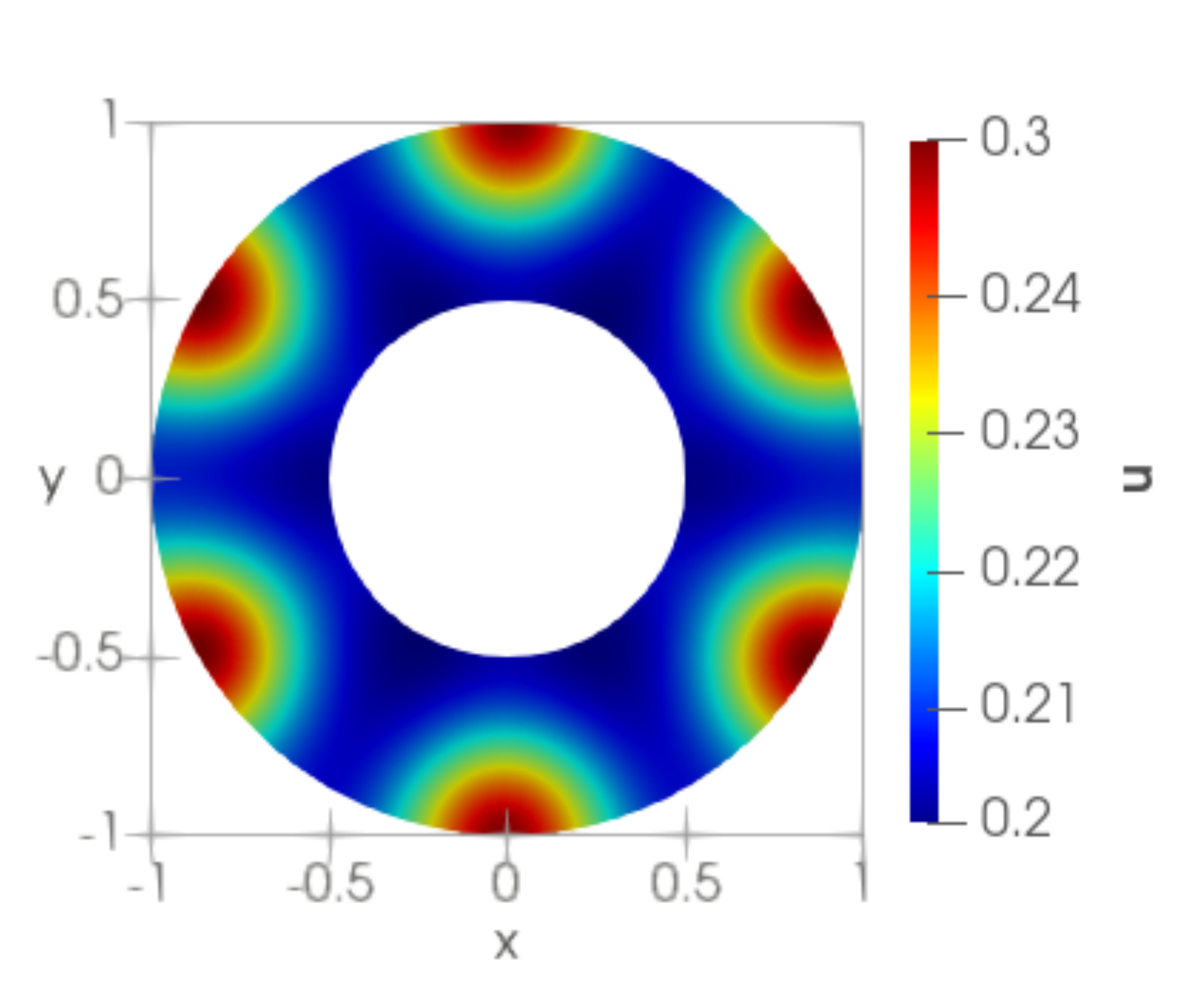} \\
{\small (e) t=0.825}
\end{tabular}
\begin{tabular}{cc}
\includegraphics[width=0.3\linewidth]{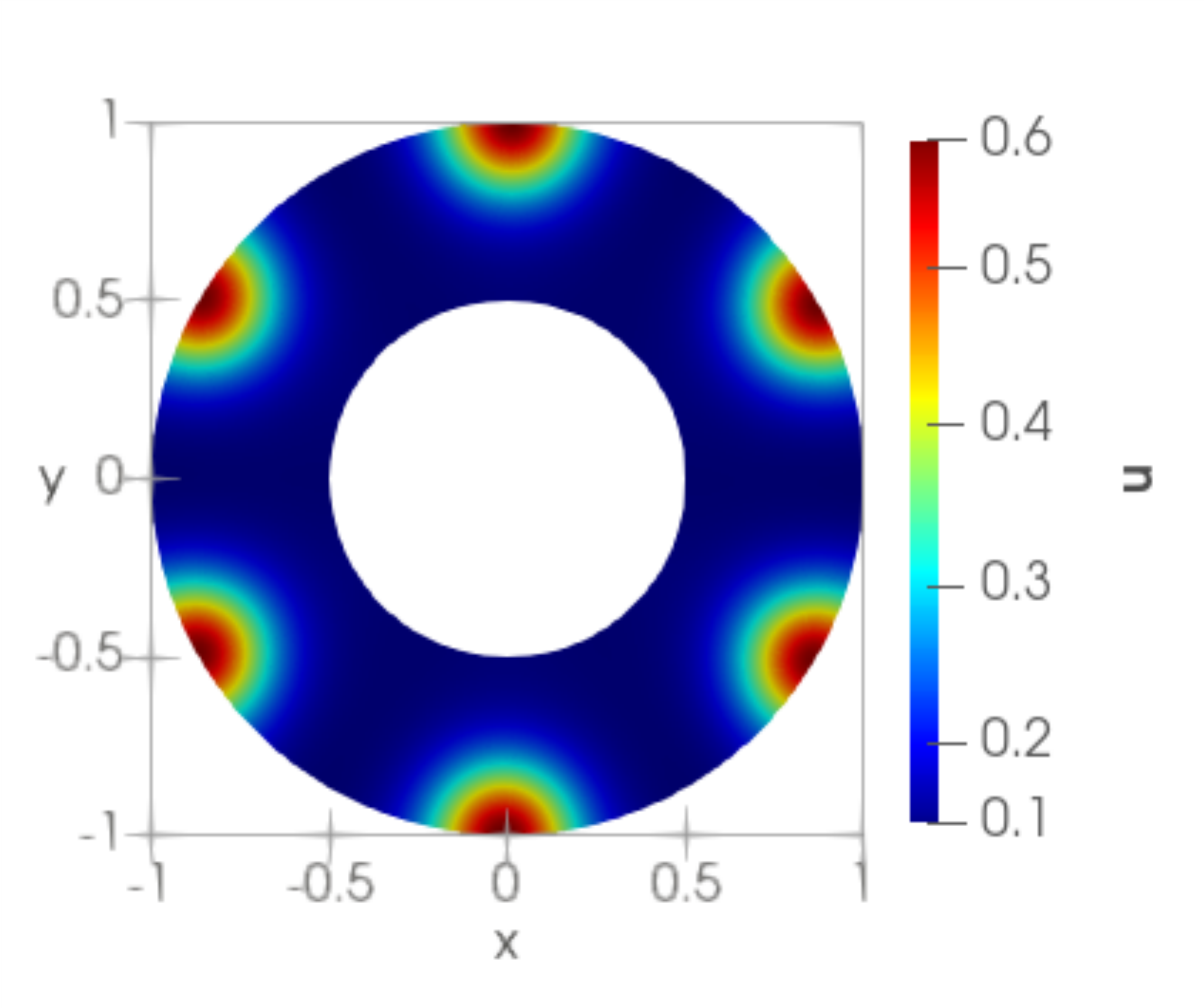} \\
{\small (f) t=1.27}
\end{tabular}
\begin{tabular}{cc}
\includegraphics[width=0.3\linewidth]{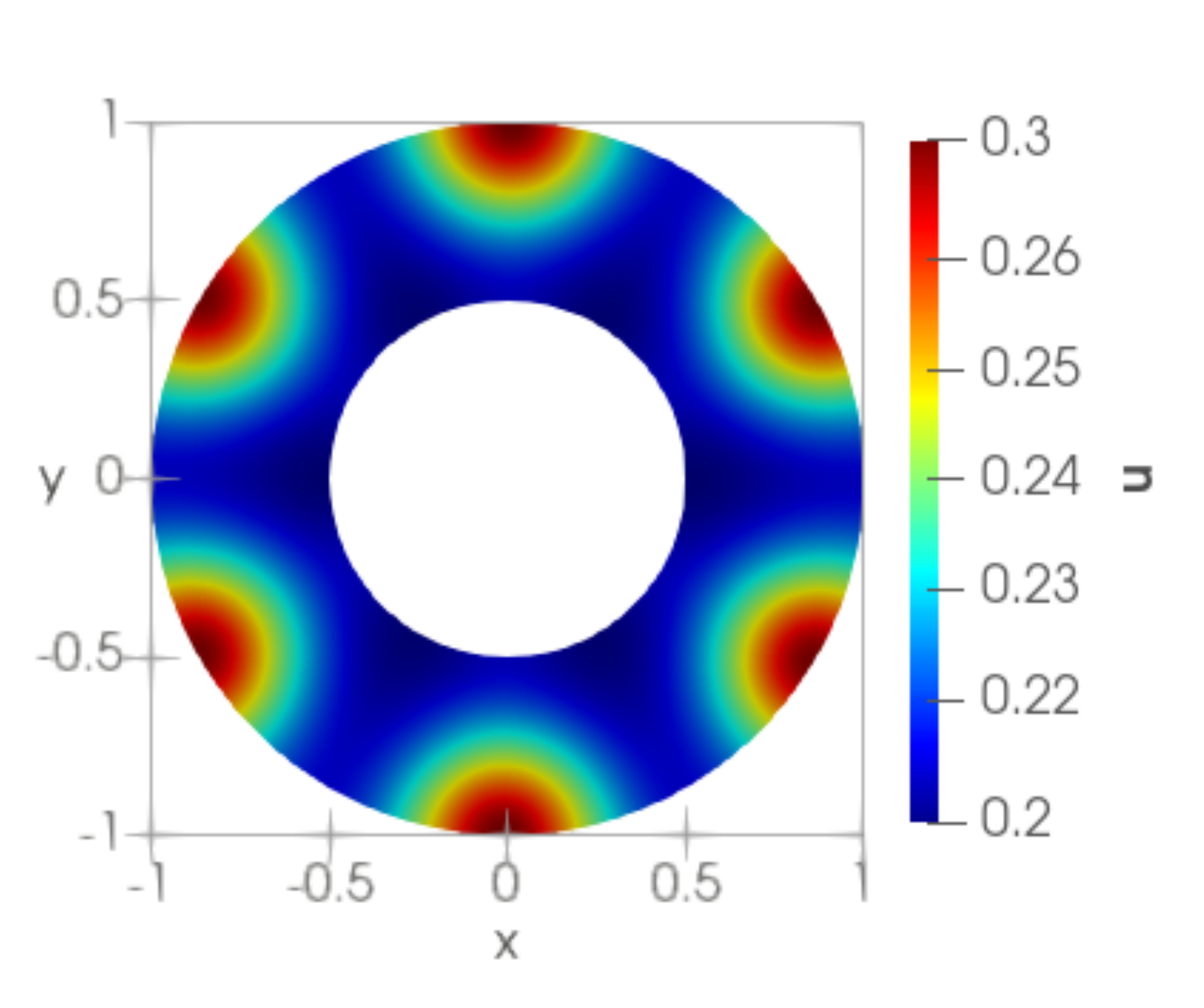} \\
{\small (g) t=1.45}
\end{tabular}
\begin{tabular}{cc}
\includegraphics[width=0.3\linewidth]{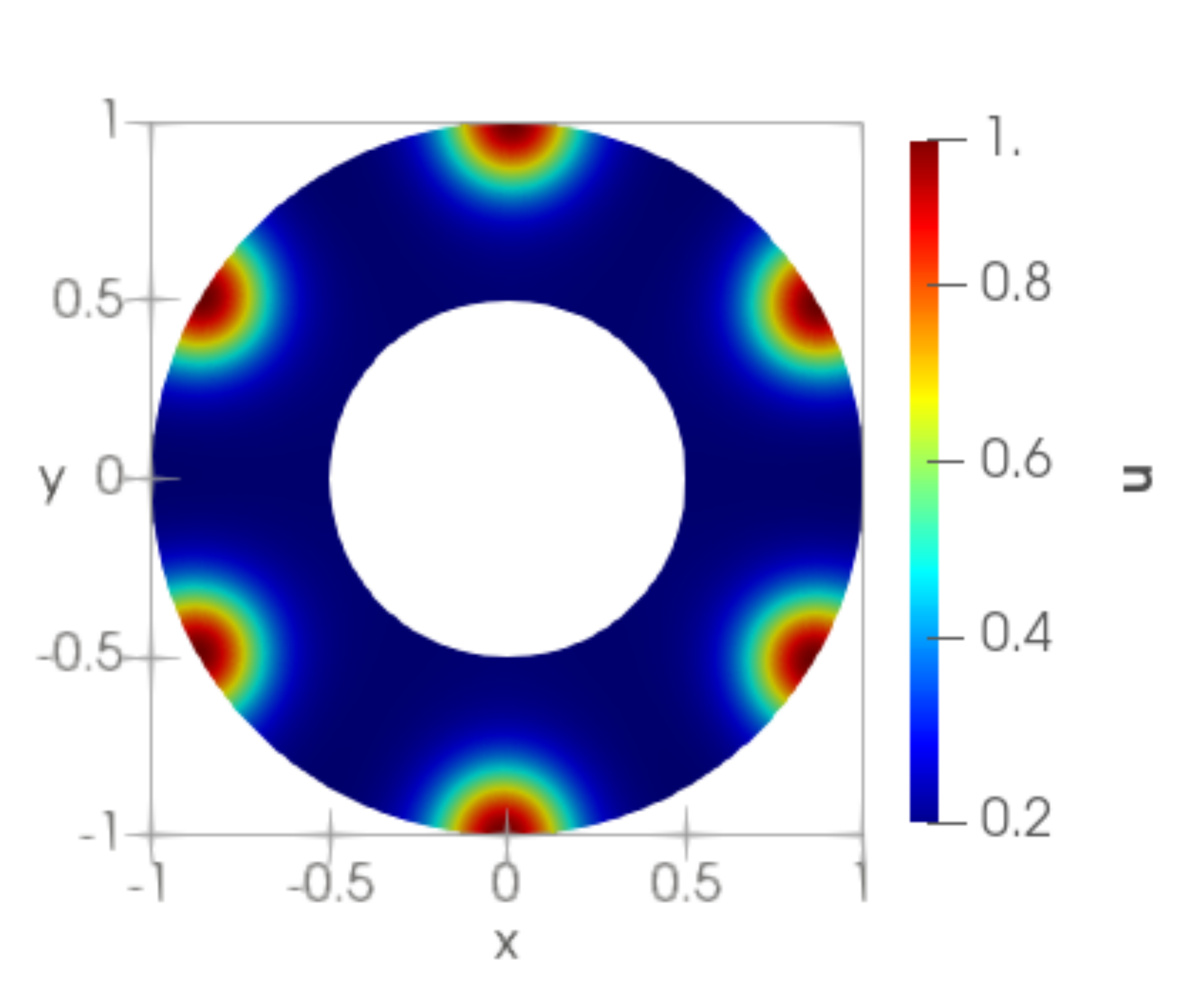} \\
{\small (h) t=1.982}
\end{tabular}
\begin{tabular}{cc}
\includegraphics[width=0.3\linewidth]{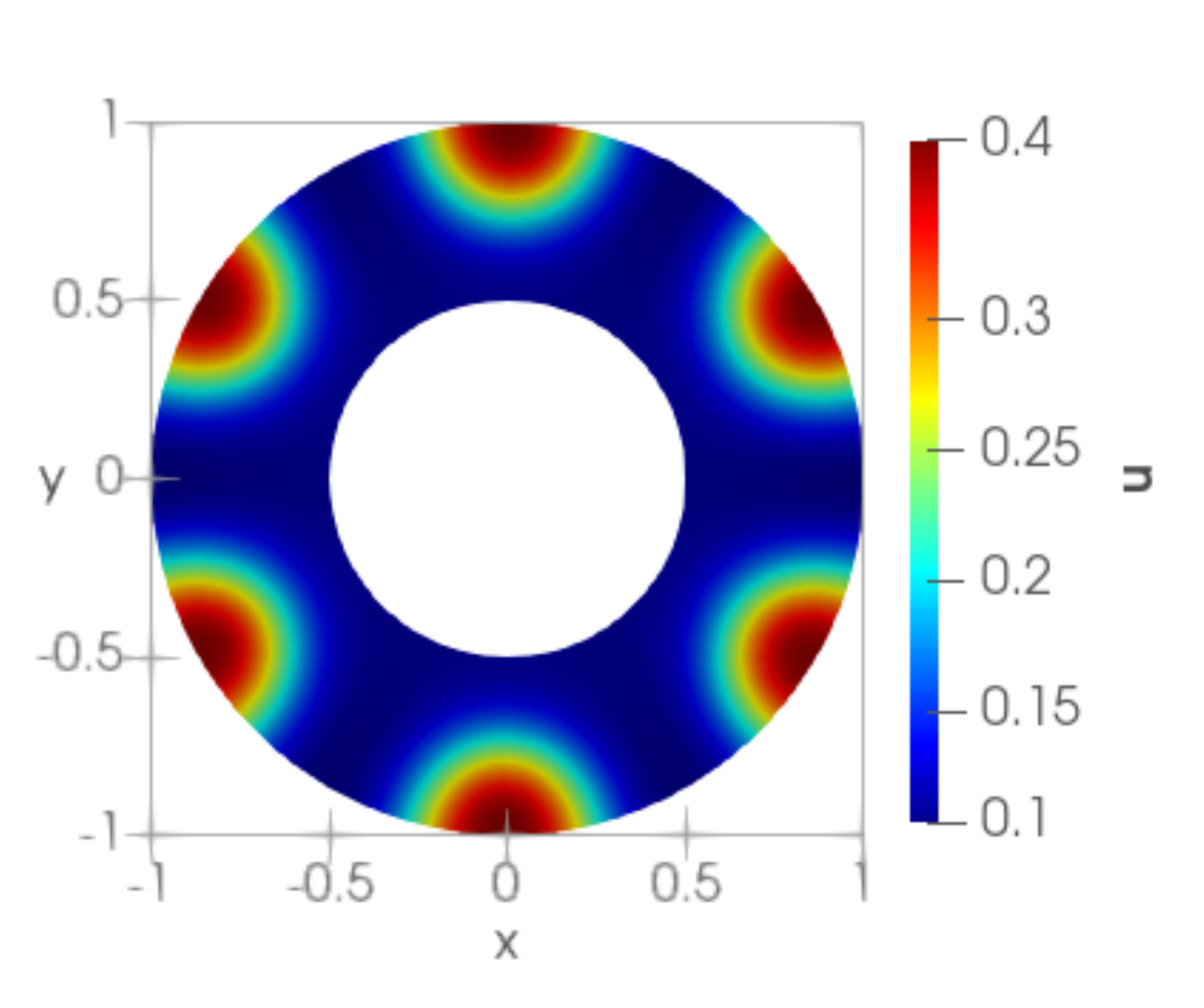} \\
{\small (i) t=2}
\end{tabular}
\begin{tabular}{cc}
\includegraphics[width=0.3\linewidth]{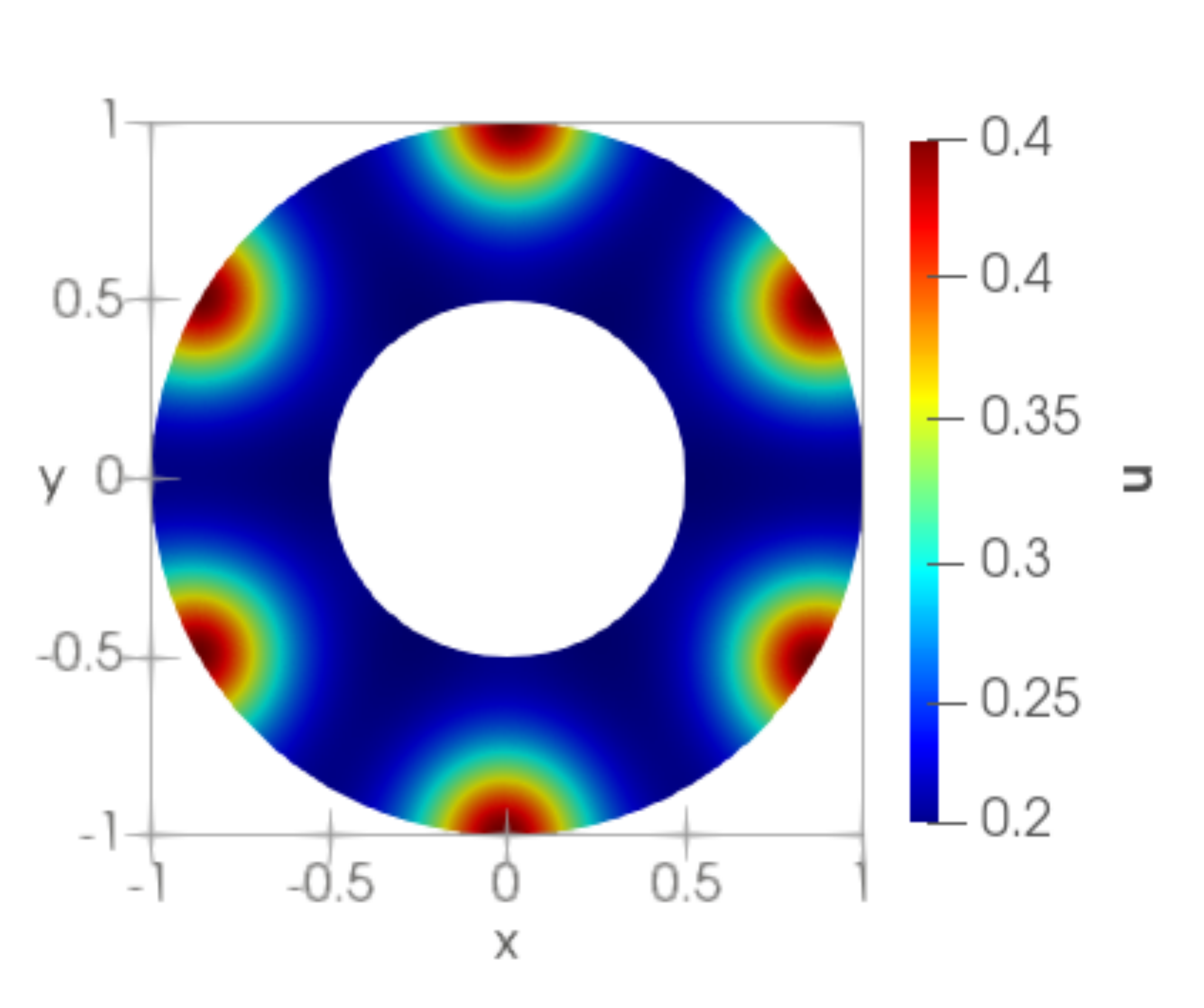} \\
{\small (j) t=2.5}
\end{tabular}
\begin{tabular}{cc}
\includegraphics[width=0.64\textwidth]{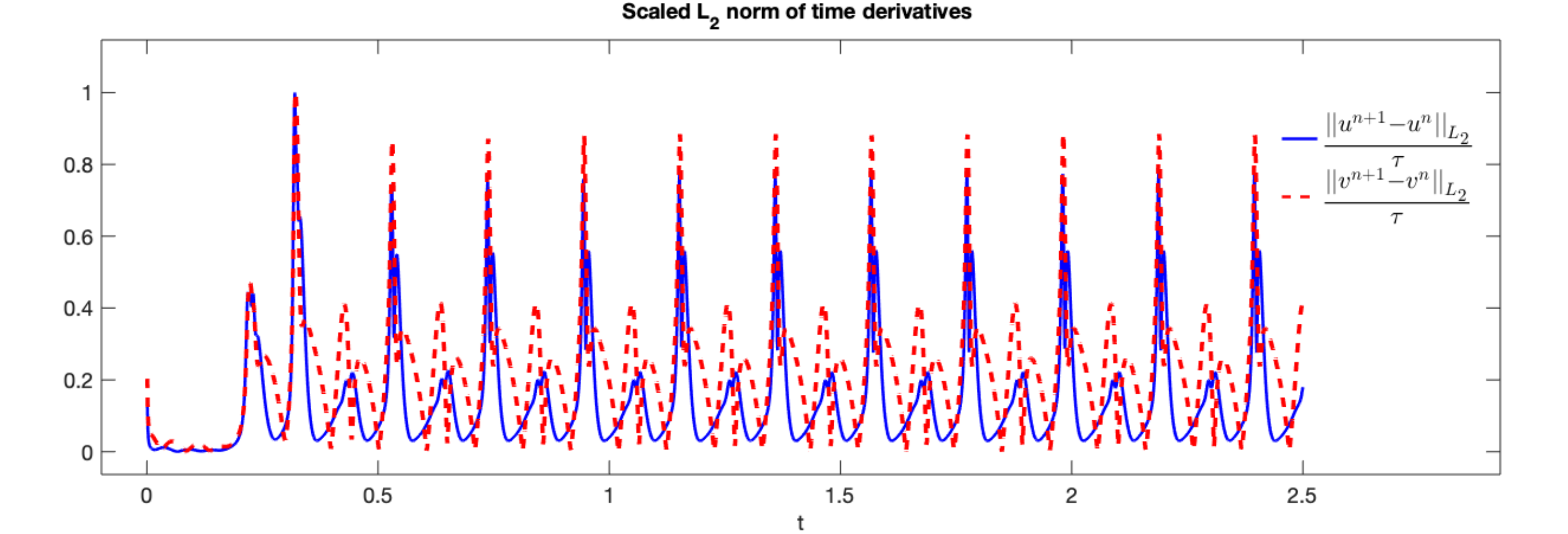} \\
{\small (k) Discrete time derivative of the solutions $u$ and $v$}
\end{tabular}
\caption{(a)-(j) Finite element simulations corresponding to the $u$-component of the cross-diffusive reaction-diffusion system on disc shape domain exhibiting {\it periodic} spatiotemporal time-periodic pattern formation, also known as {\it standing waves}. Parameters are selected to satisfy conditions of Theorems \ref{theo1} and \ref{Maincond} on $\rho$ with $\Delta t =0.0025$, $d=2.6$, $\gamma=375$, $d_u=1.6$, $d_v=0.5$, $\alpha=0.09$ and $\beta=0.1$, as shown in Fig. \ref{HopfTransParameter}. (k) Plot of the $L_2$ norms of the discrete time-derivatives showing the periodicity of the solutions $u$ and $v$.}
\label{Limitcycle1}
\end{figure}

\begin{figure}[H]
\begin{tabular}{cc}
\includegraphics[width=0.28\linewidth]{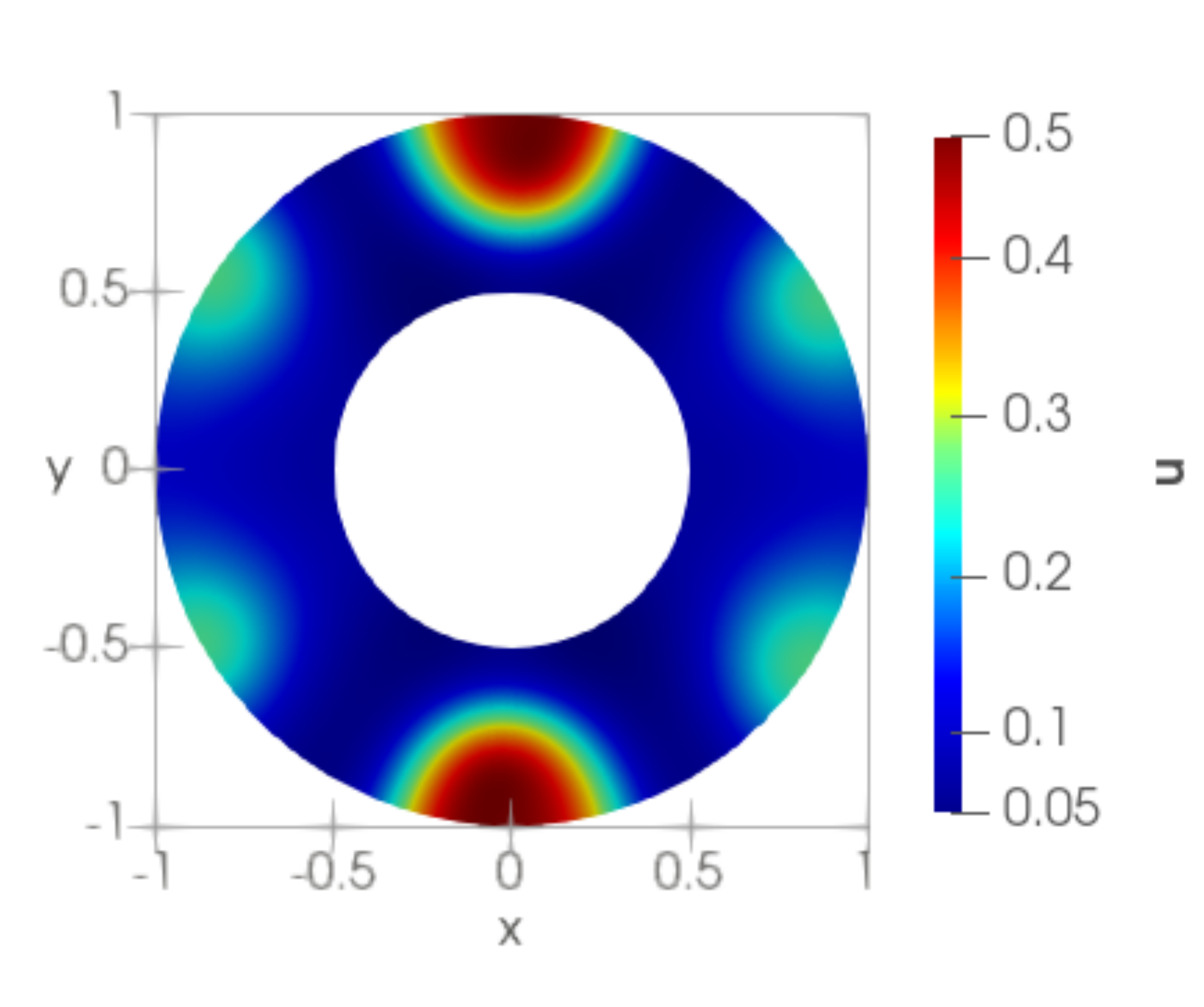} \\
{\small (a) t=0.235}
\end{tabular}
\begin{tabular}{cc}
\includegraphics[width=0.28\linewidth]{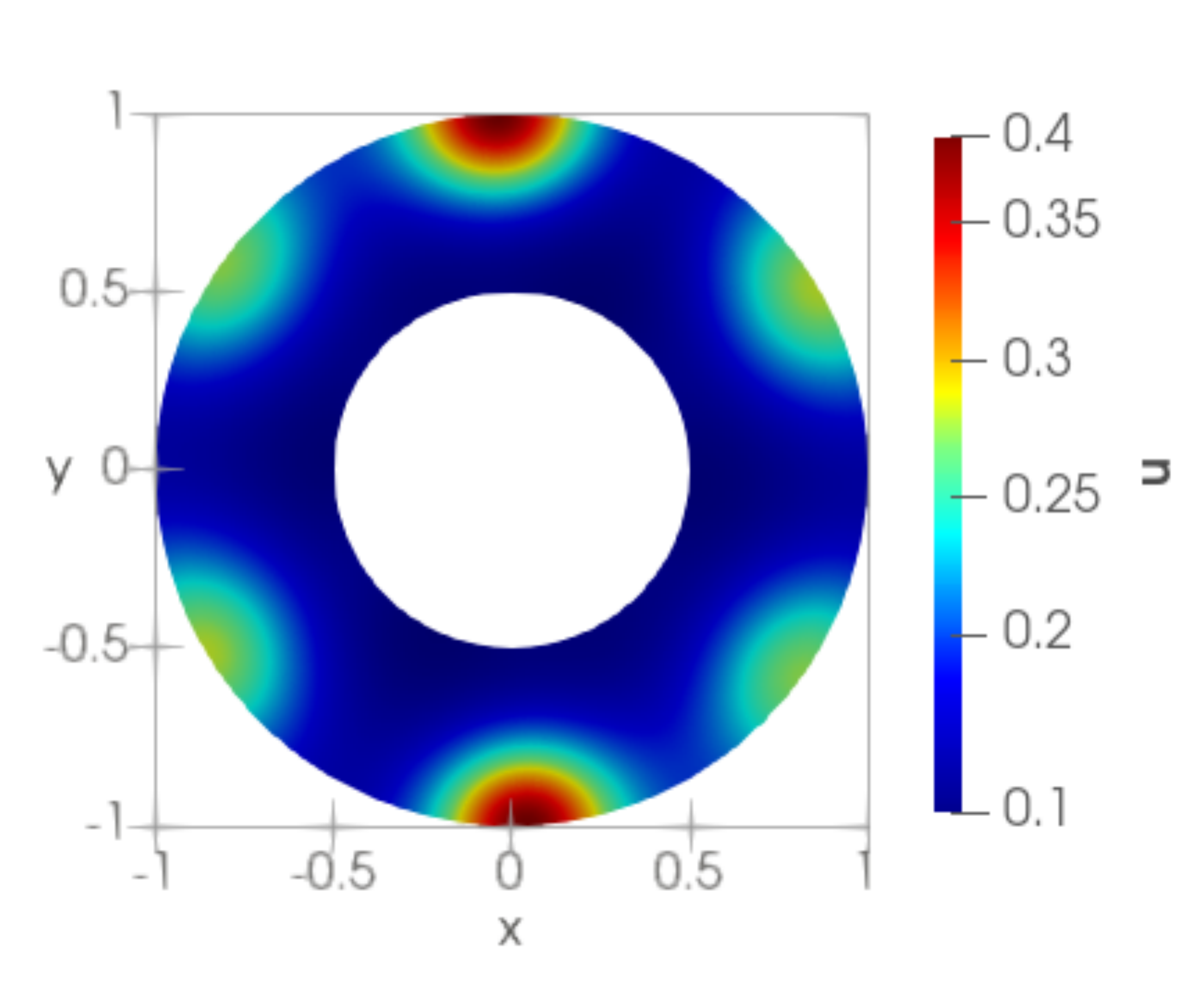} \\
{\small (b) t=0.7}
\end{tabular}
\begin{tabular}{cc}
\includegraphics[width=0.3\linewidth]{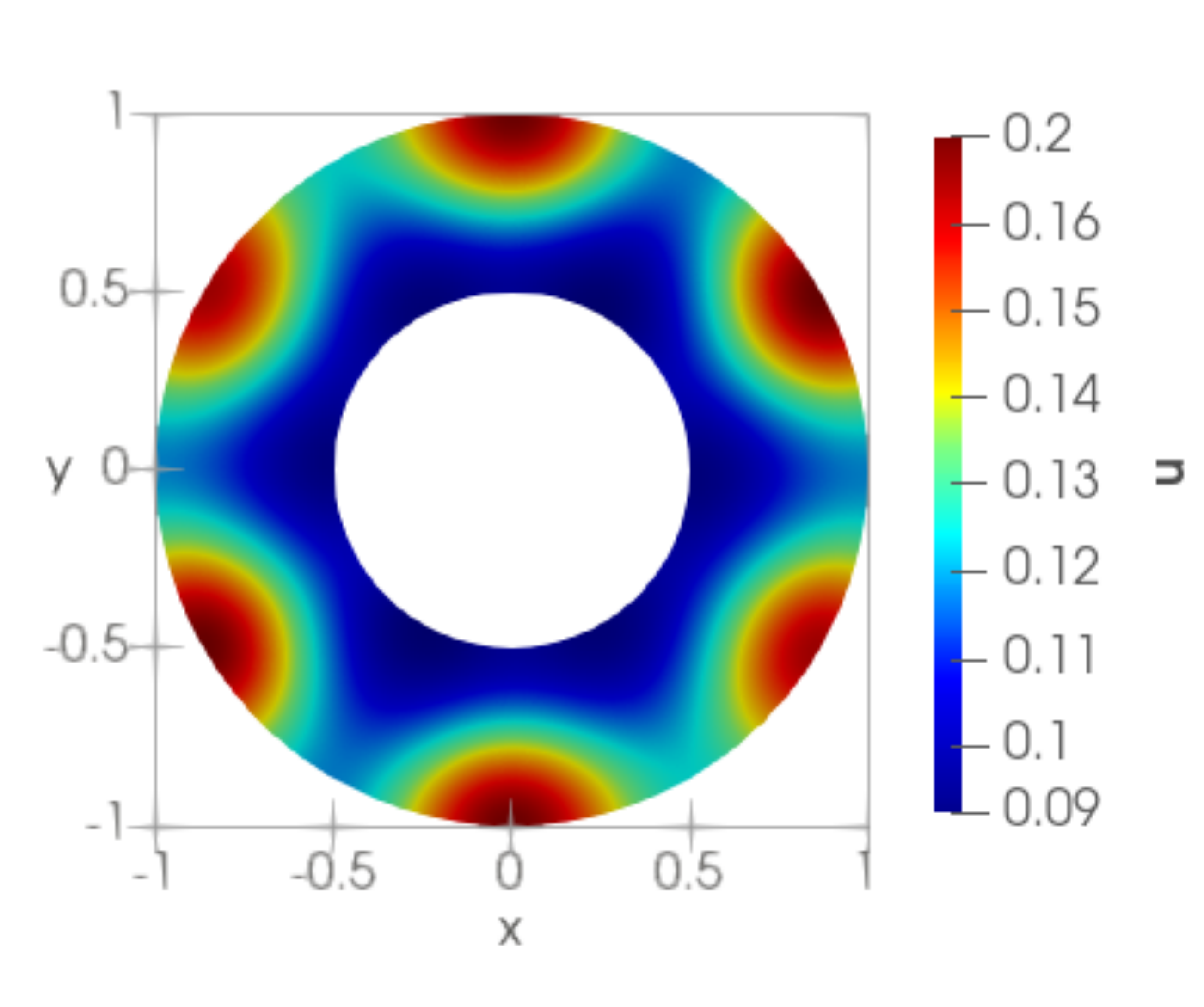} \\
{\small (c) t=0.945}
\end{tabular}
\begin{tabular}{cc}
\includegraphics[width=0.3\linewidth]{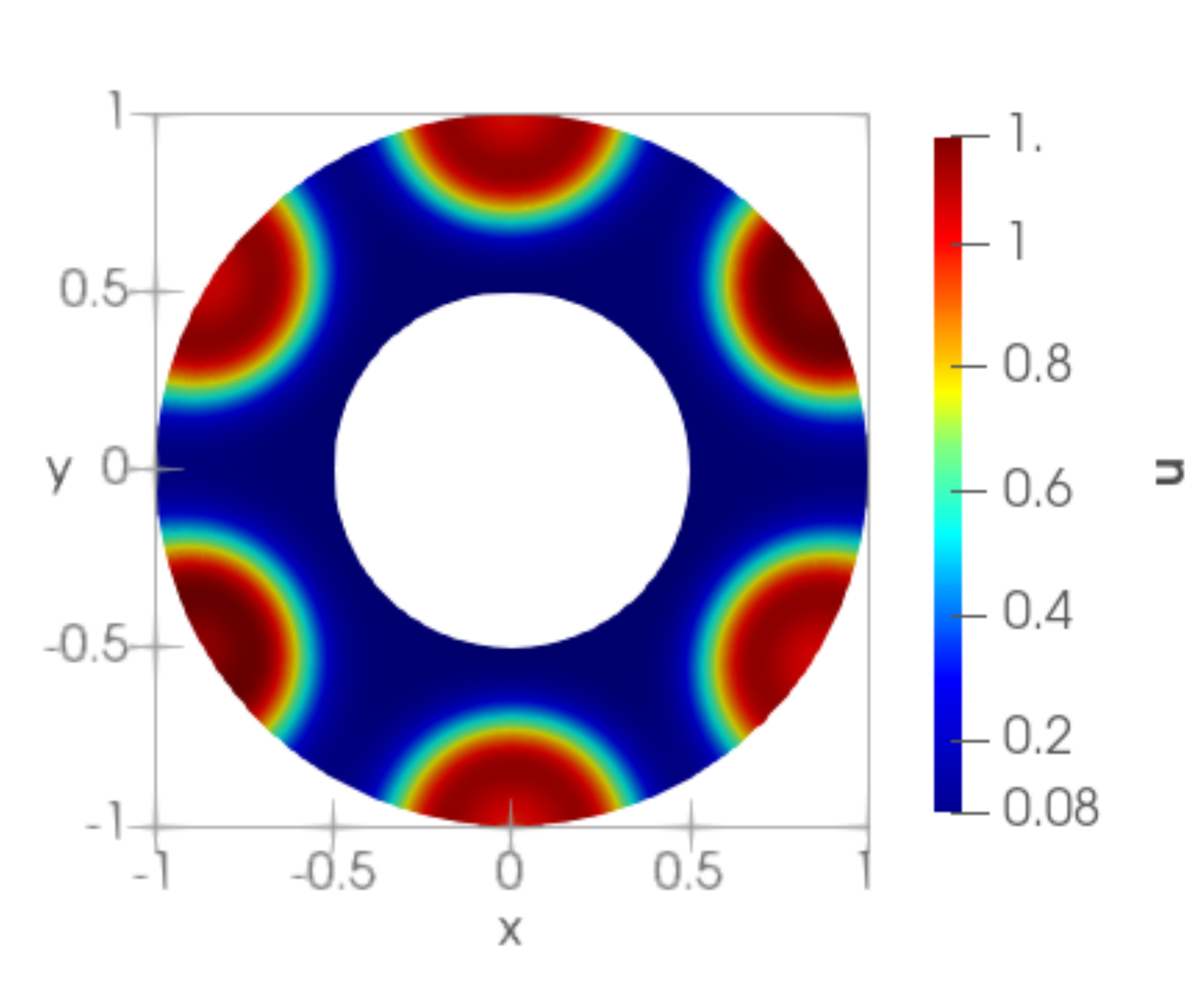} \\
{\small (d) t=1.495}
\end{tabular}
\begin{tabular}{cc}
\includegraphics[width=0.3\linewidth]{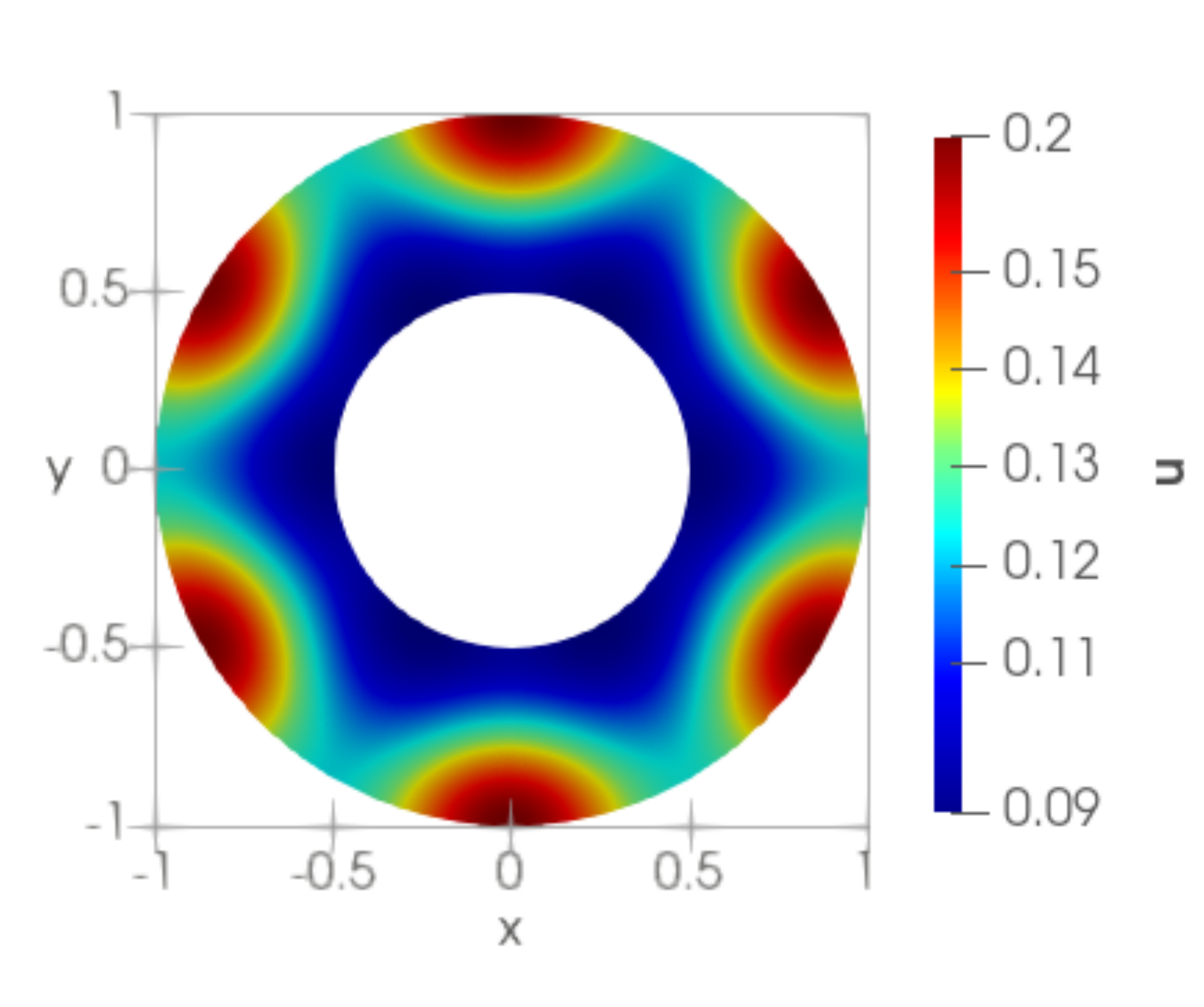} \\
{\small (e) t=1.725}
\end{tabular}
\begin{tabular}{cc}
\includegraphics[width=0.3\linewidth]{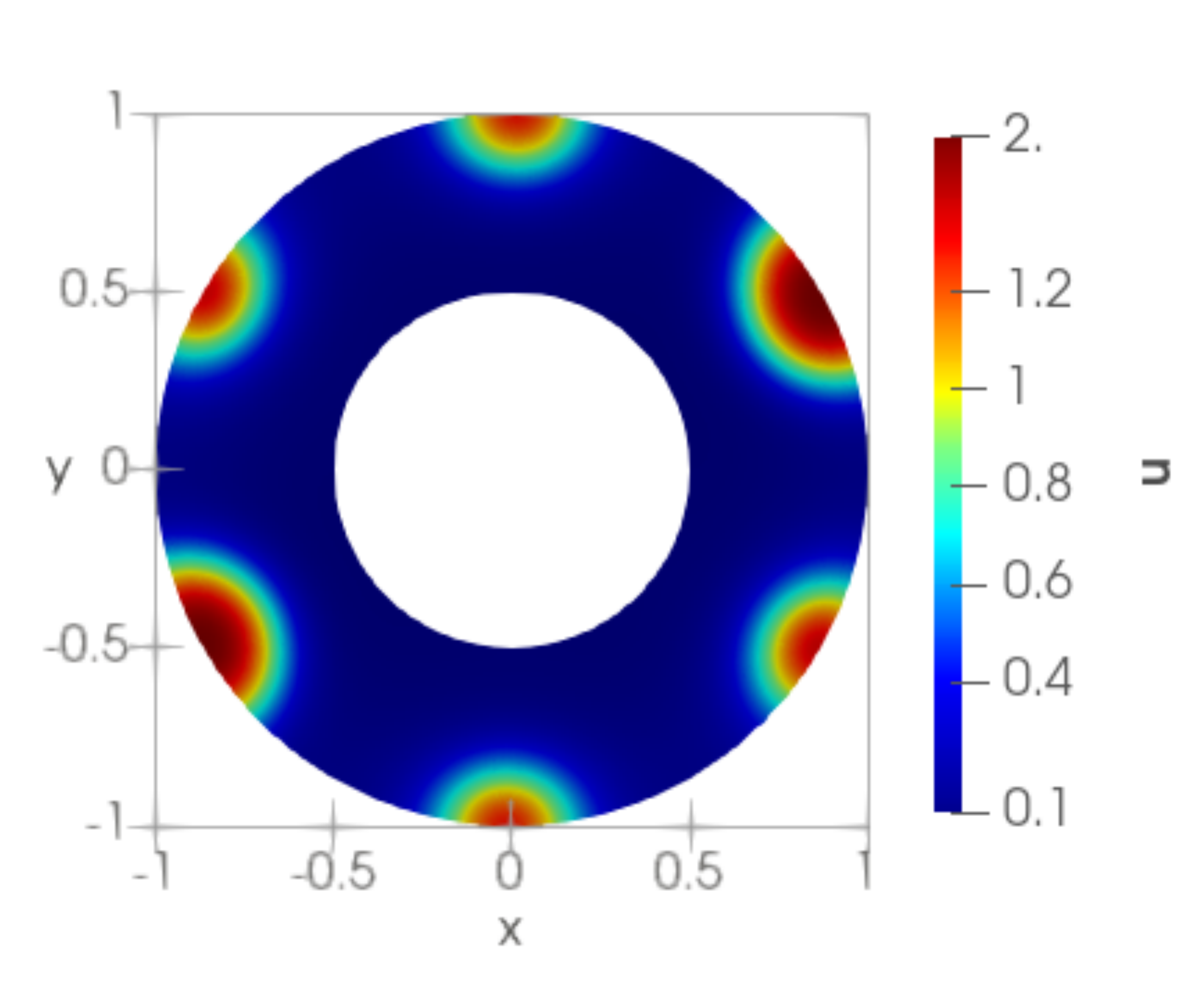} \\
{\small (f) t=1.8}
\end{tabular}
\begin{tabular}{cc}
\includegraphics[width=0.3\linewidth]{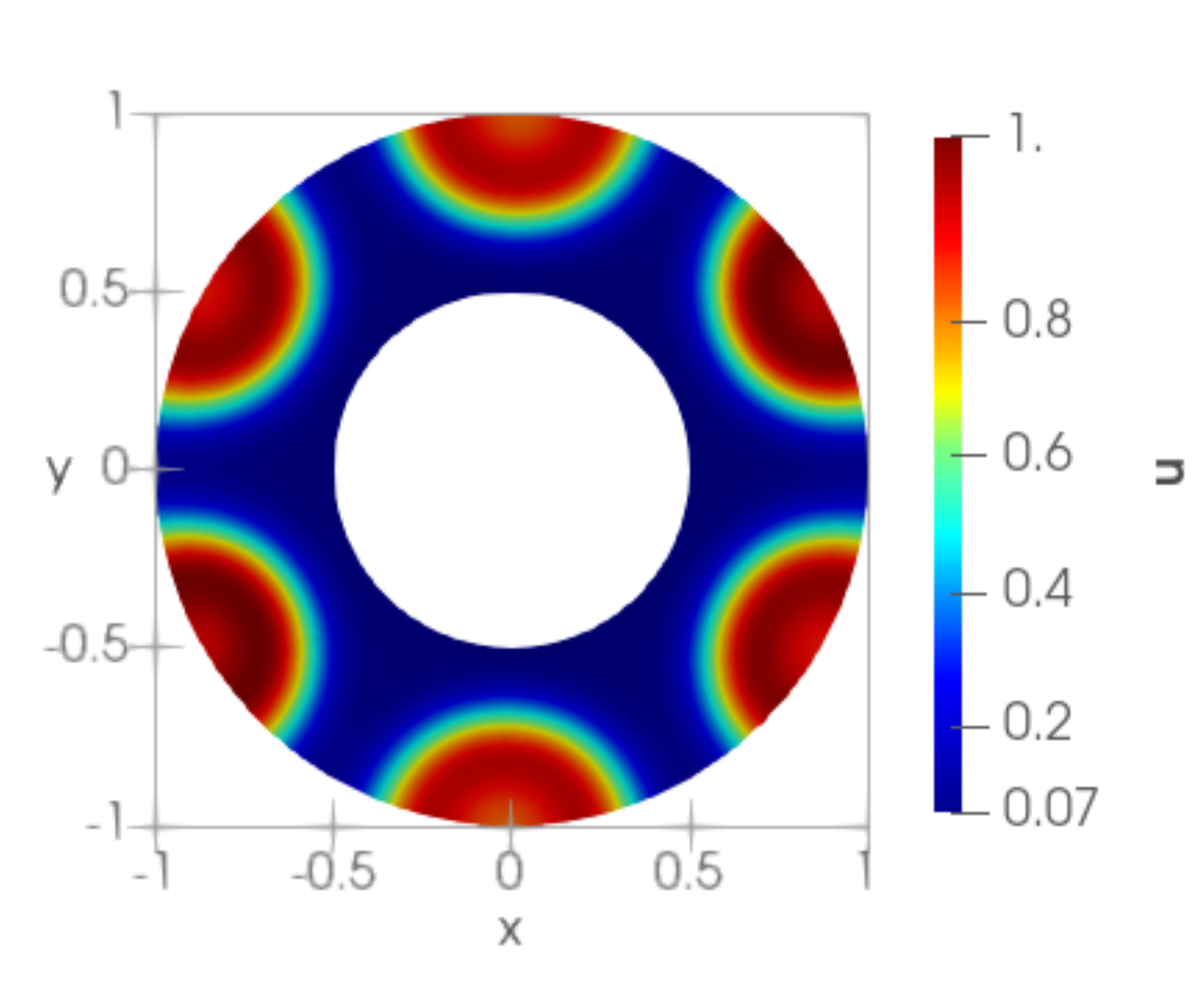} \\
{\small (g) t=1.965}
\end{tabular}
\begin{tabular}{cc}
\includegraphics[width=0.3\linewidth]{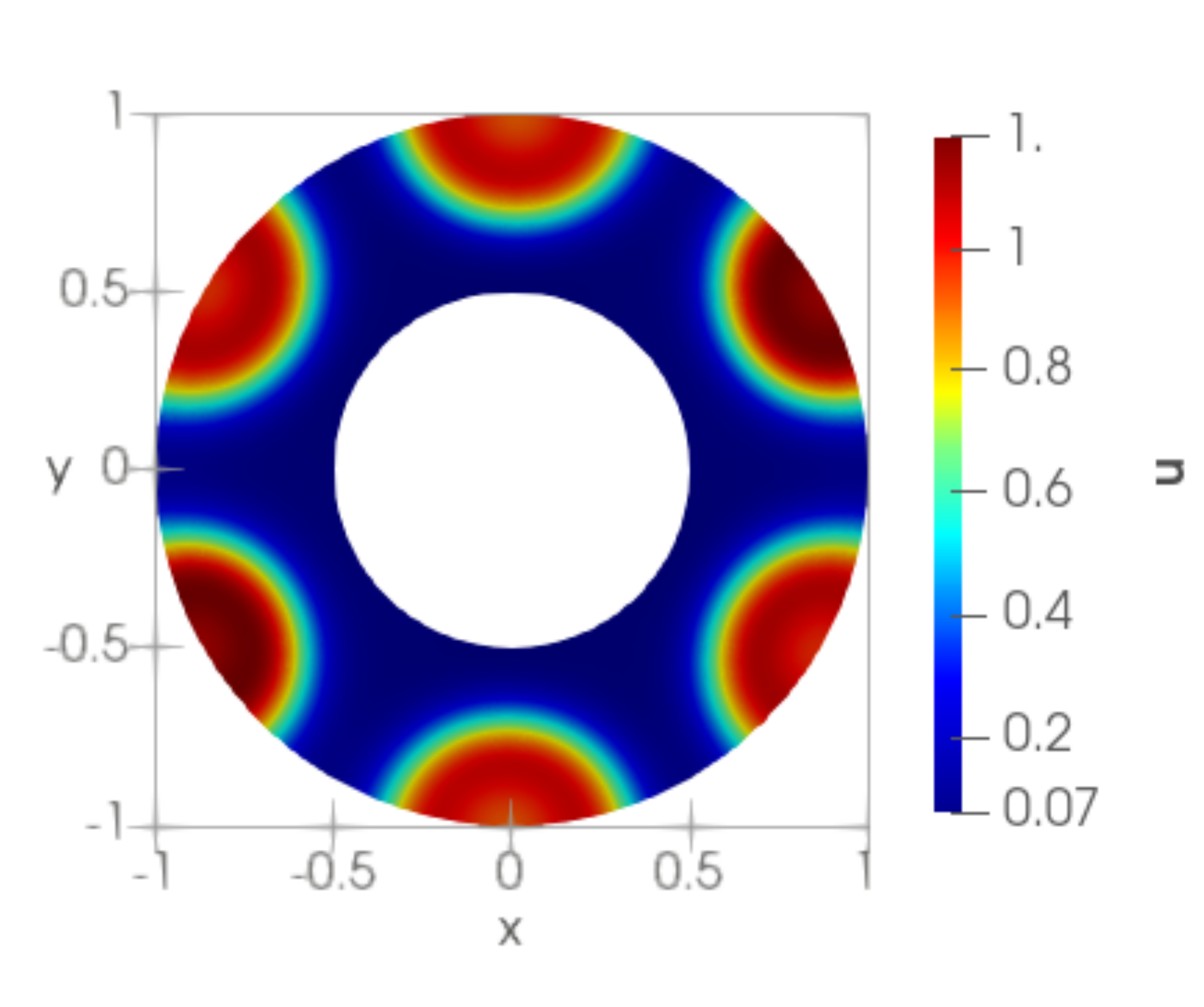} \\
{\small (h) t=2.12}
\end{tabular}
\begin{tabular}{cc}
\includegraphics[width=0.3\linewidth]{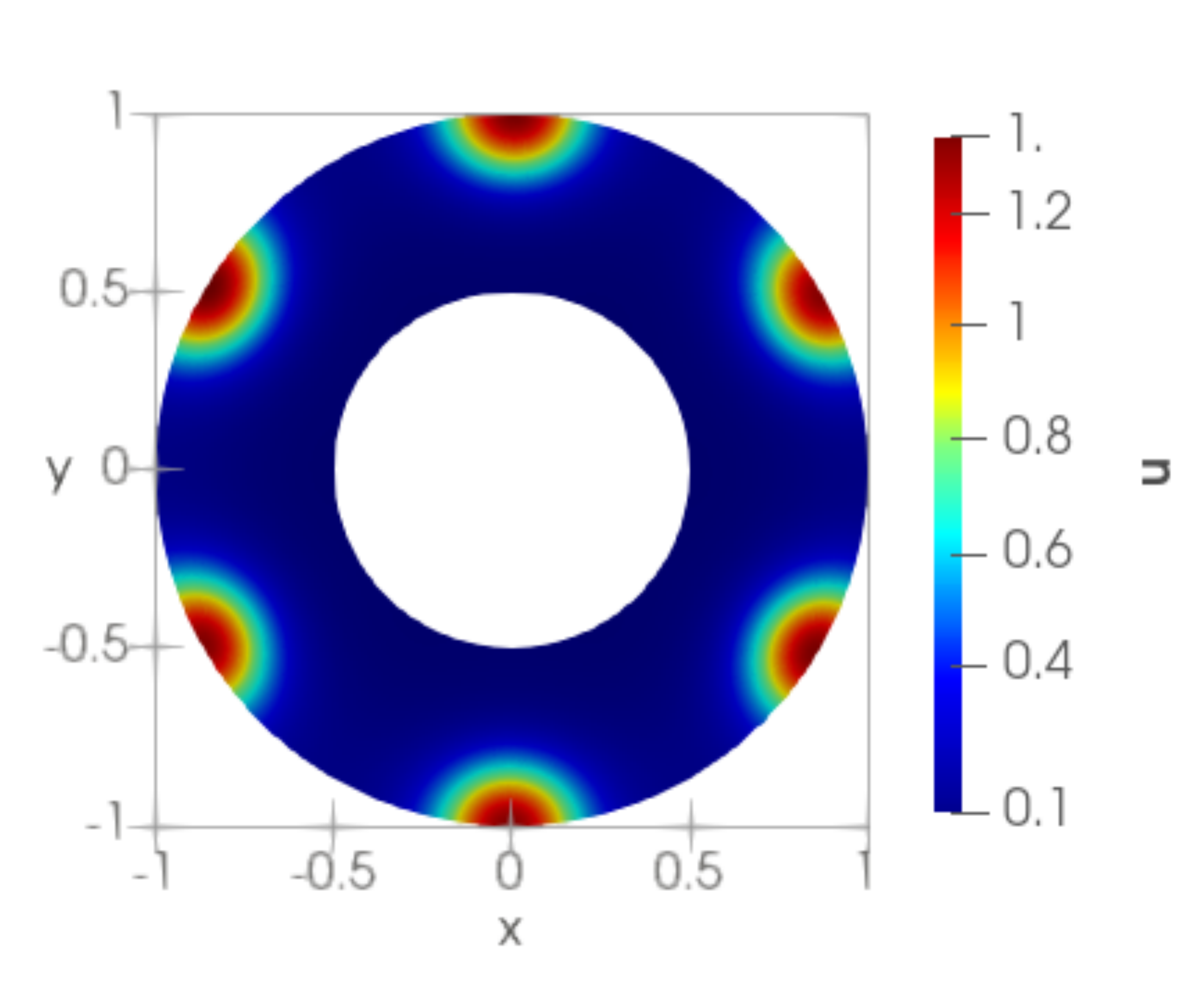} \\
{\small (i) t=2.265}
\end{tabular}
\begin{tabular}{cc}
\includegraphics[width=0.3\linewidth]{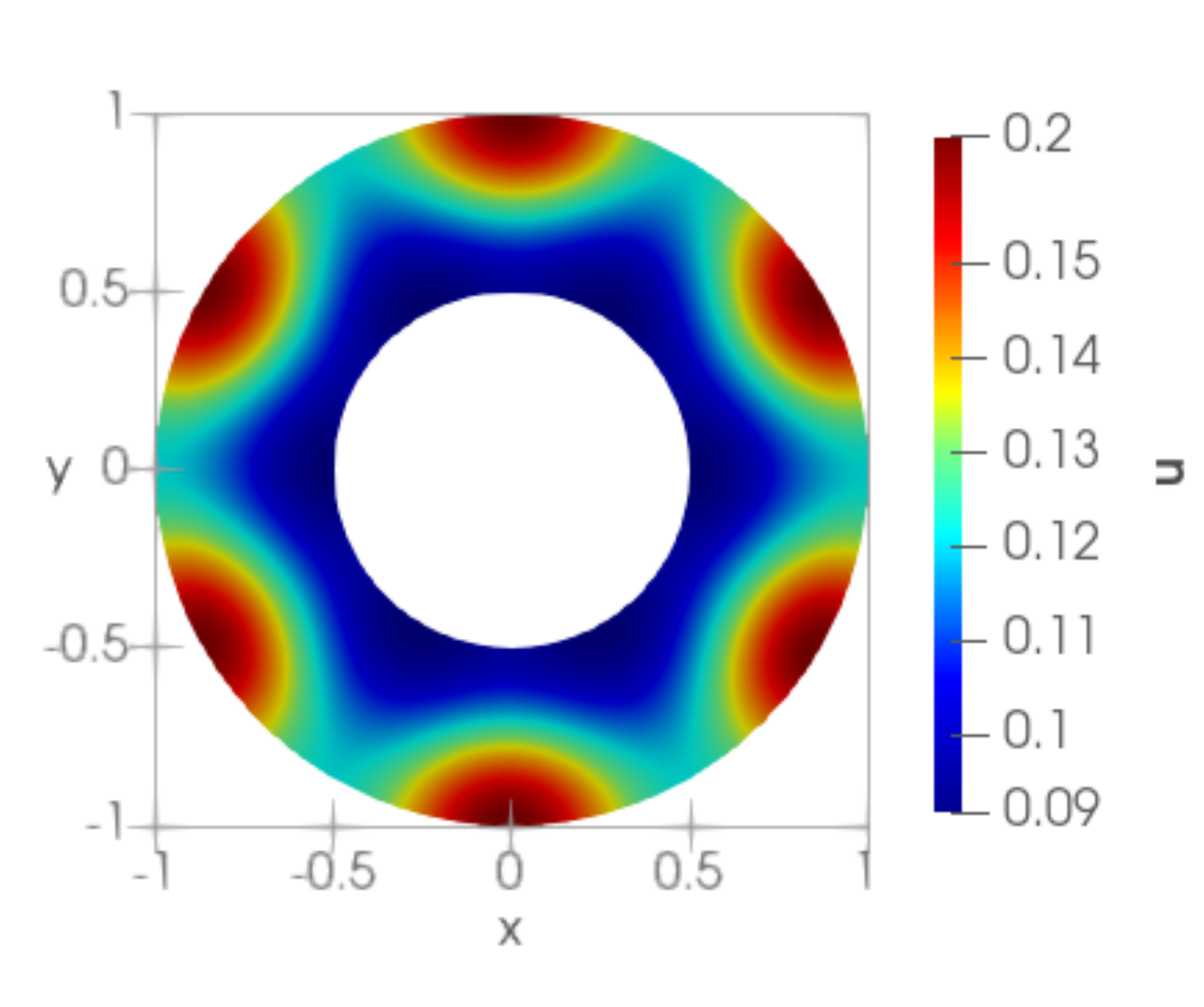} \\
{\small (j) t=2.5}
\end{tabular}
\begin{tabular}{cc}
\includegraphics[width=0.64\textwidth]{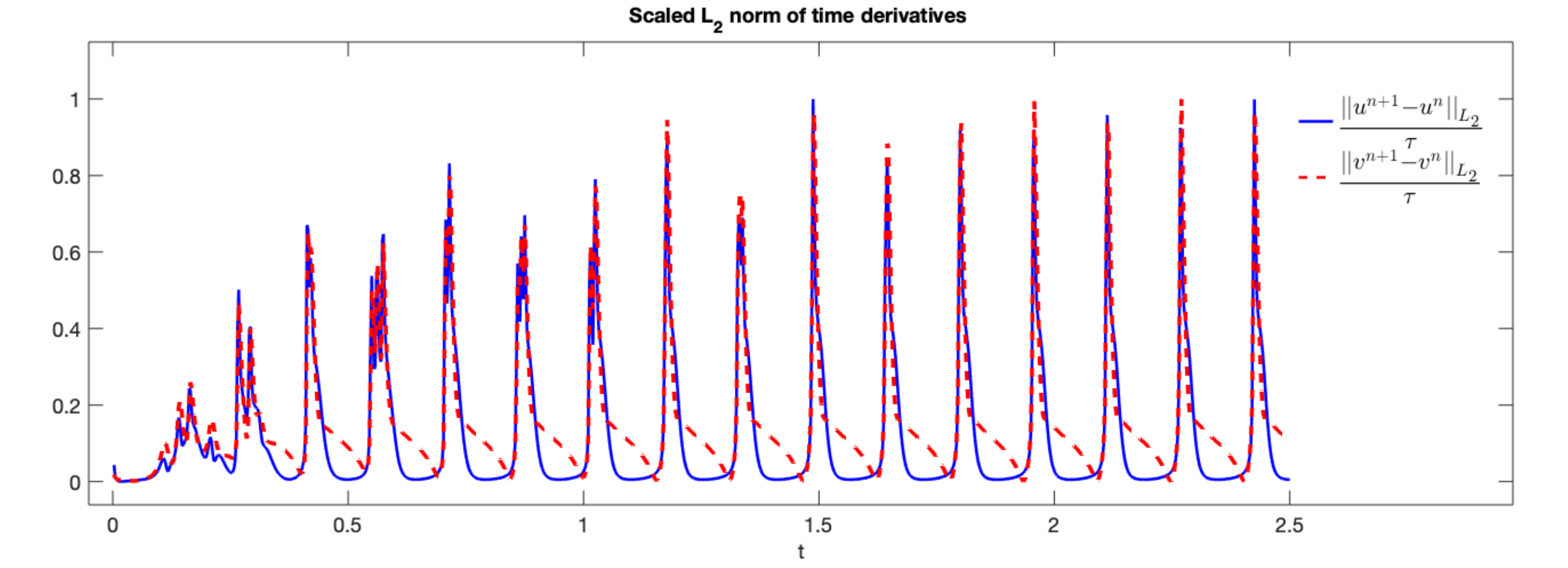} \\
{\small (k) Discrete time derivative of the solutions $u$ and $v$}
\end{tabular}
\caption{(a)-(j) Finite element simulations corresponding to the $u$-component of the cross-diffusive reaction-diffusion system on disc shape domain exhibiting spatiotemporal time-periodic pattern formation. Parameters are selected to satisfy conditions of Theorems \ref{theo1} and \ref{Maincond} on $\rho$ with $\Delta t =0.0025$, $d=1$, $\gamma=250$, $d_u=-0.9$, $d_v=0.55$, $\alpha=0.085$ and $\beta=0.1$, as shown in Fig. \ref{HopfTransParameter}. (k) Plot of the $L_2$ norms showing the periodicity of the  discrete time-derivative of the numerical solutions $u$ and $v$.}
\label{Limitcycle2}
\end{figure}

\begin{figure}[H]
\begin{tabular}{cc}
\includegraphics[width=0.28\linewidth]{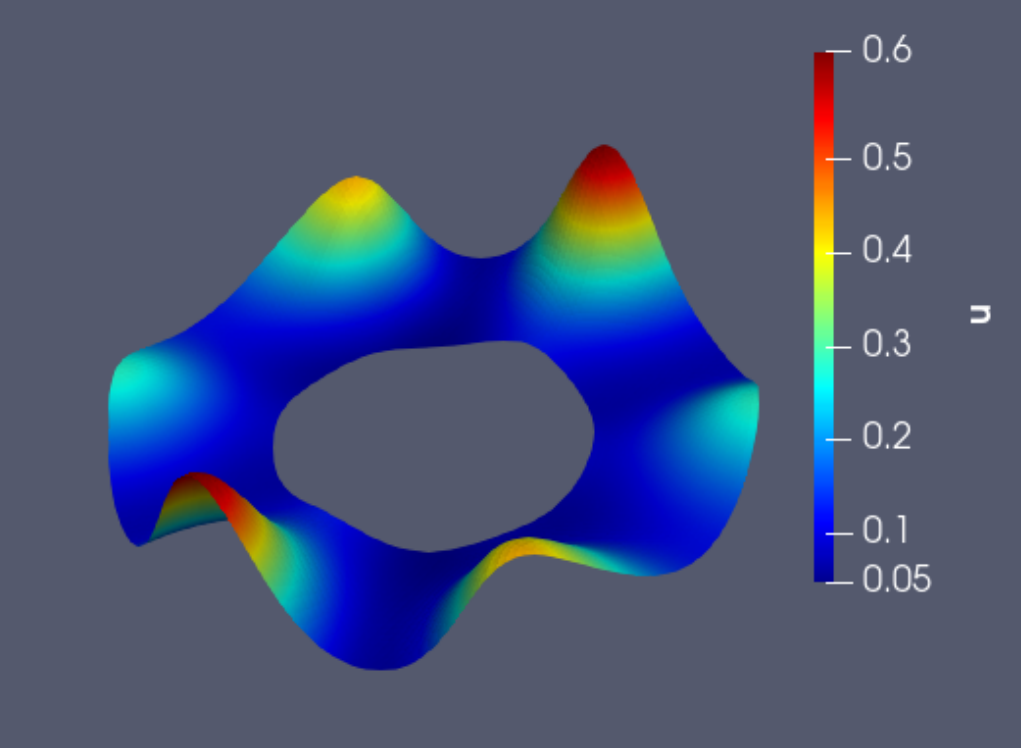} \\
{\small (a) t=0.2}
\end{tabular}
\begin{tabular}{cc}
\includegraphics[width=0.28\linewidth]{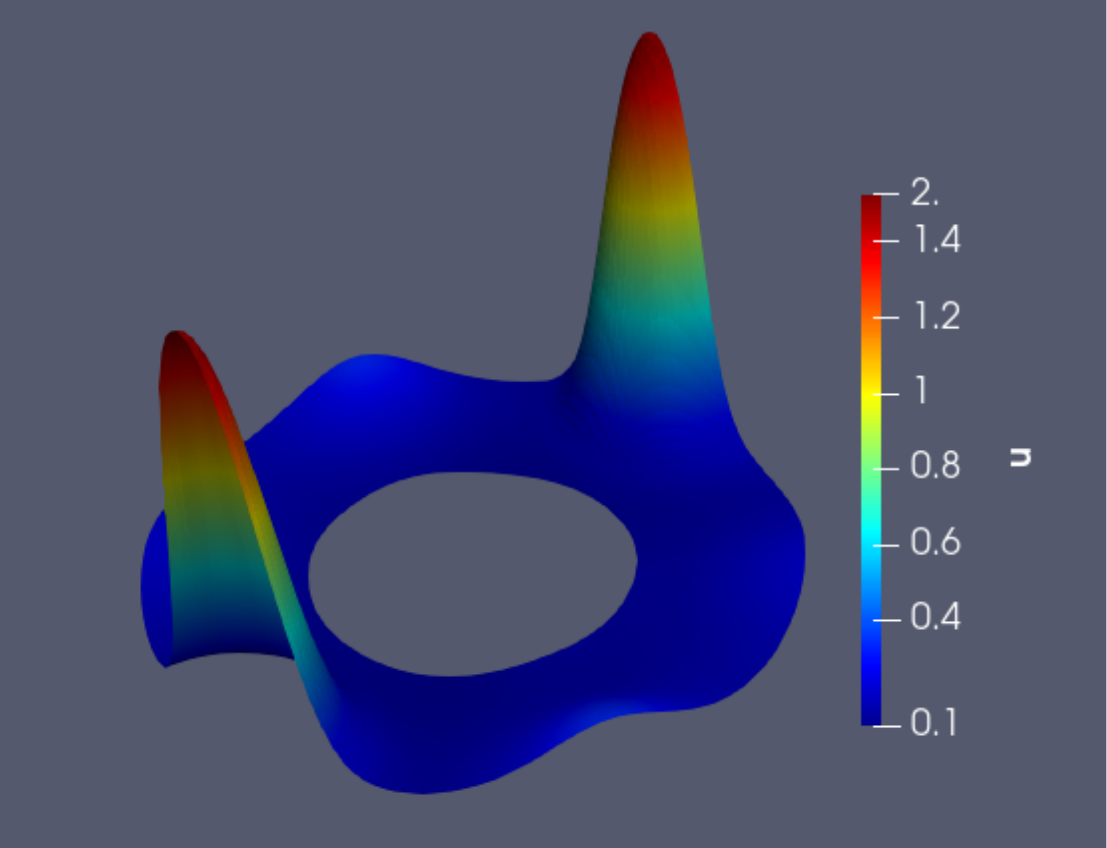} \\
{\small (b) t=0.55}
\end{tabular}
\begin{tabular}{cc}
\includegraphics[width=0.3\linewidth]{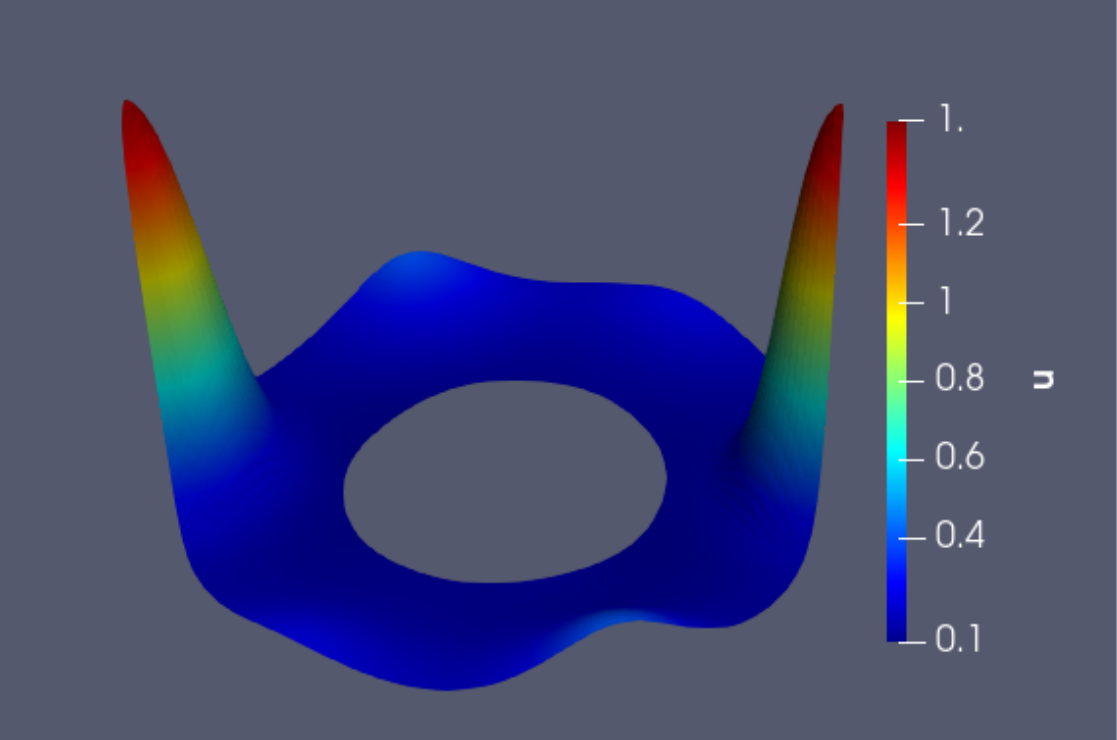} \\
{\small (c) t=0.86}
\end{tabular}
\begin{tabular}{cc}
\includegraphics[width=0.3\linewidth]{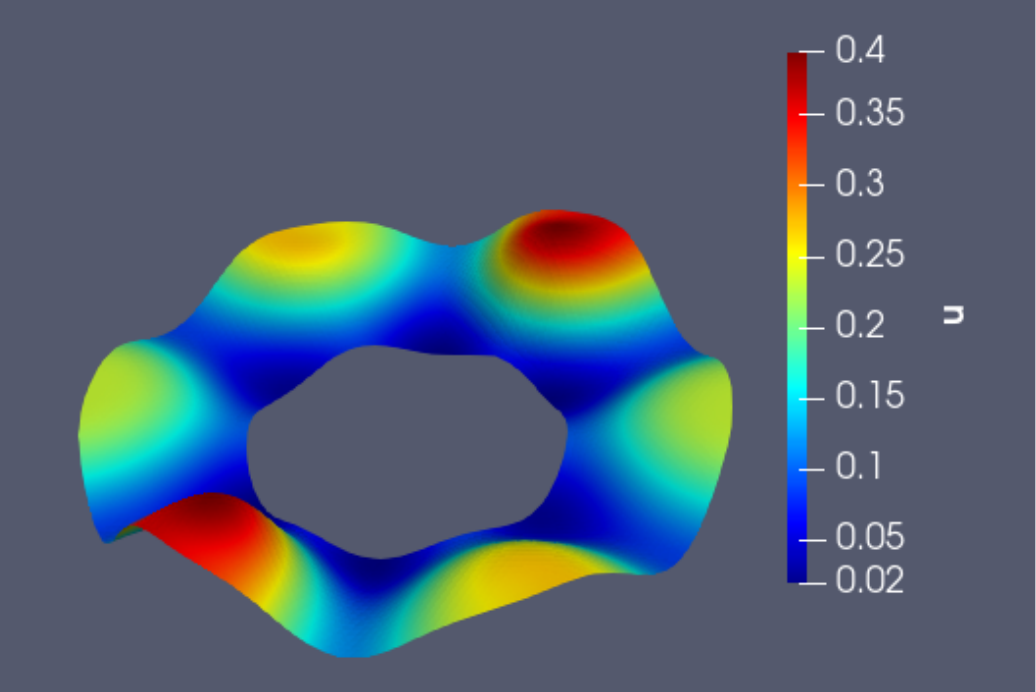} \\
{\small (d) t=0.9}
\end{tabular}
\begin{tabular}{cc}
\includegraphics[width=0.3\linewidth]{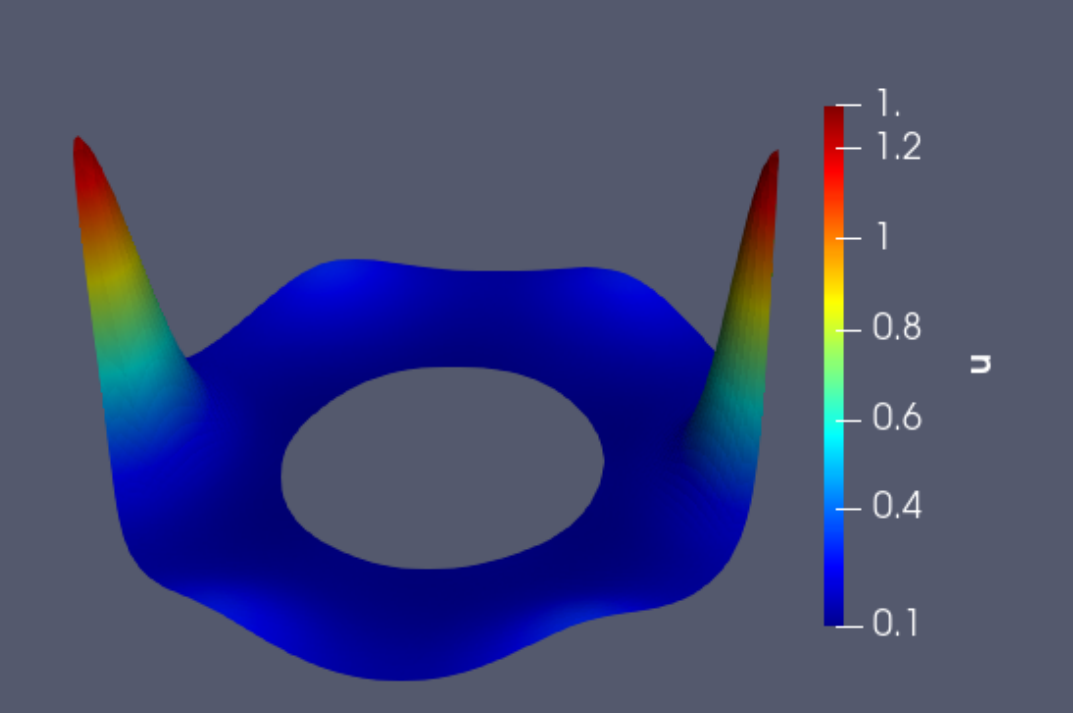} \\
{\small (e) t=1.025}
\end{tabular}
\begin{tabular}{cc}
\includegraphics[width=0.3\linewidth]{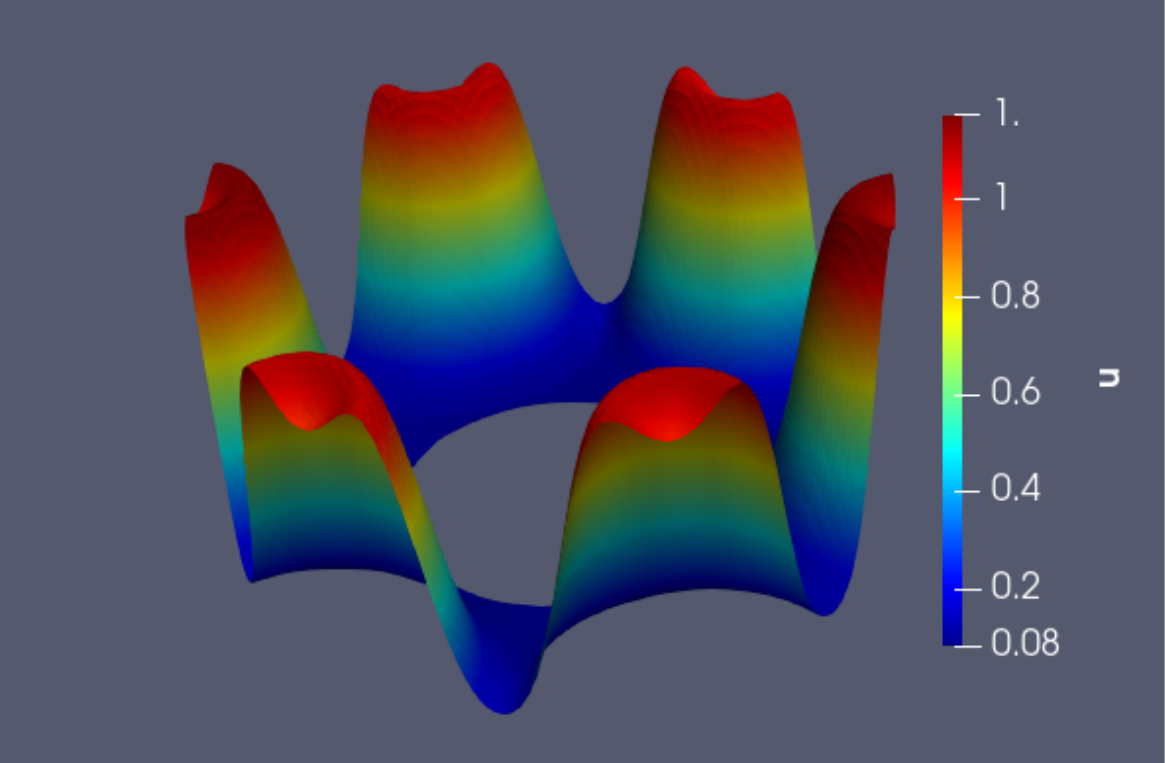} \\
{\small (f) t=1.495}
\end{tabular}
\begin{tabular}{cc}
\includegraphics[width=0.3\linewidth]{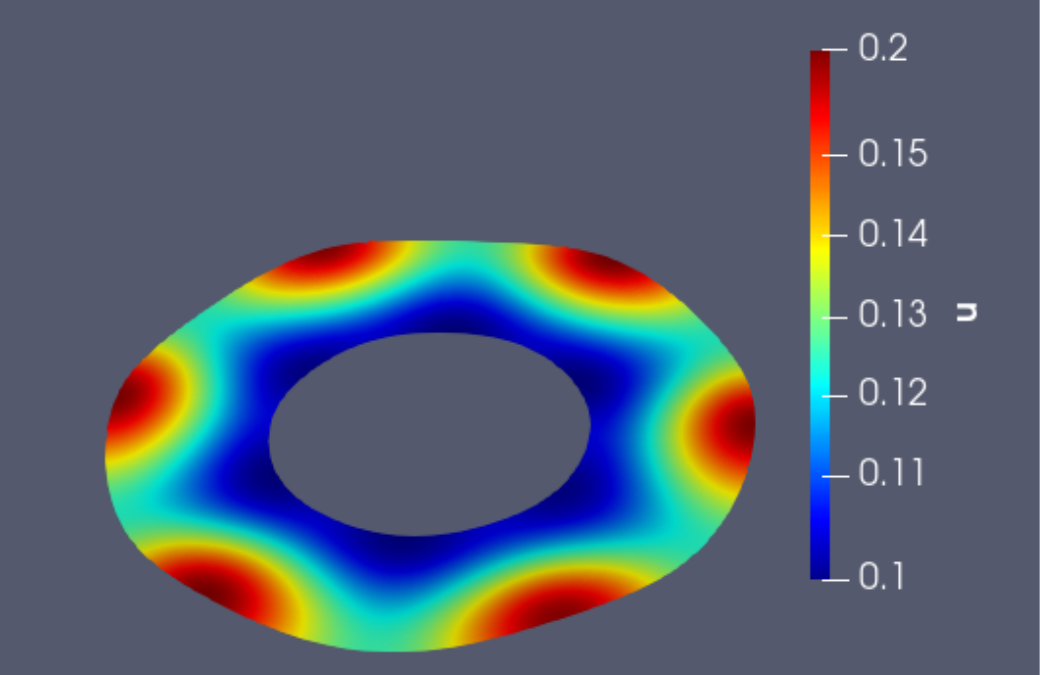} \\
{\small (g) t=1.725}
\end{tabular}
\begin{tabular}{cc}
\includegraphics[width=0.3\linewidth]{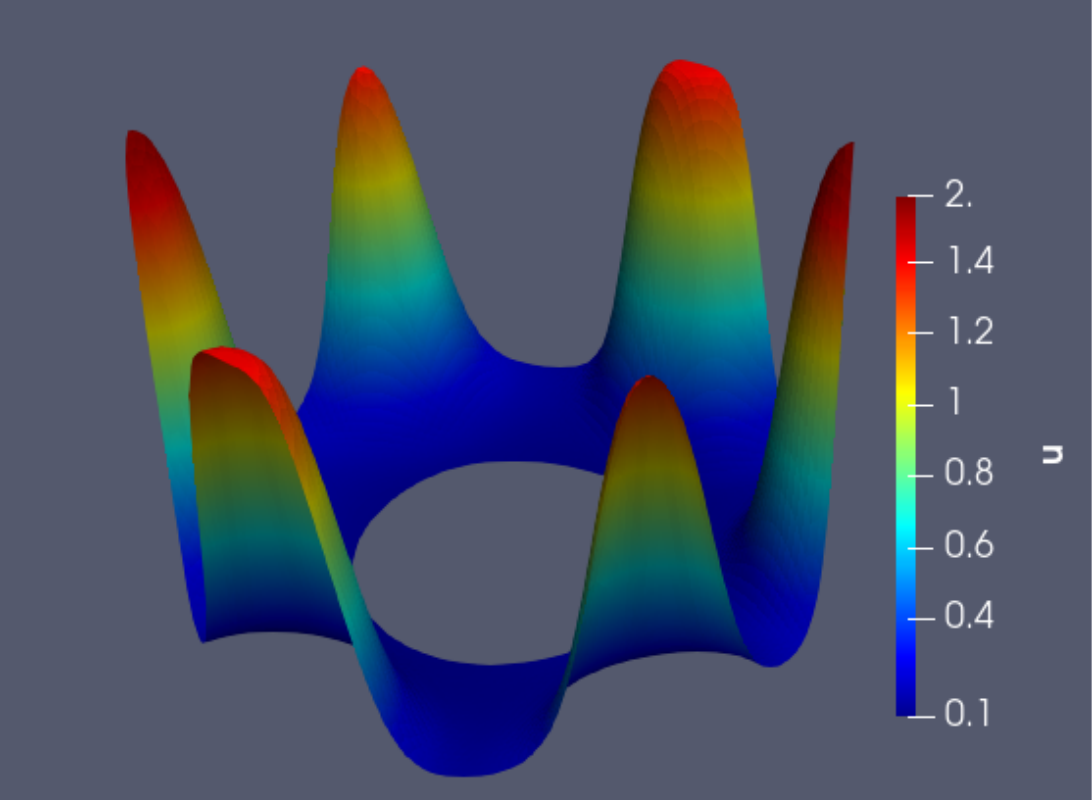} \\
{\small (h) t=1.775}
\end{tabular}
\begin{tabular}{cc}
\includegraphics[width=0.3\linewidth]{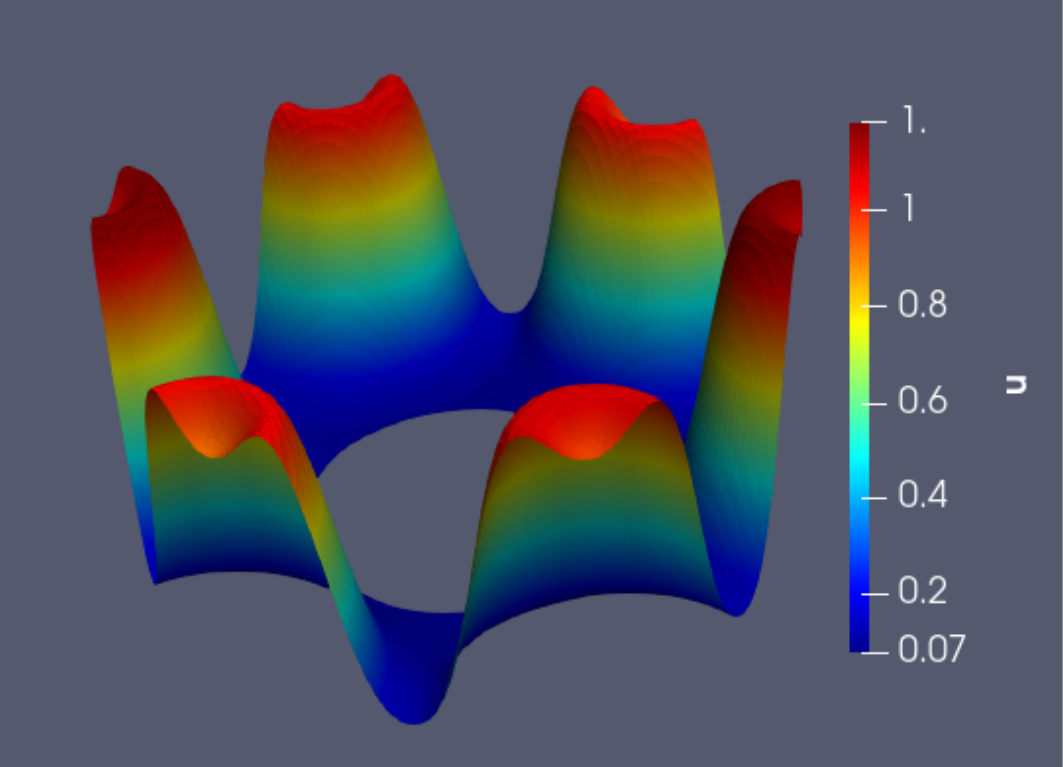} \\
{\small (i) t=2.12}
\end{tabular}
\begin{tabular}{cc}
\includegraphics[width=0.3\linewidth]{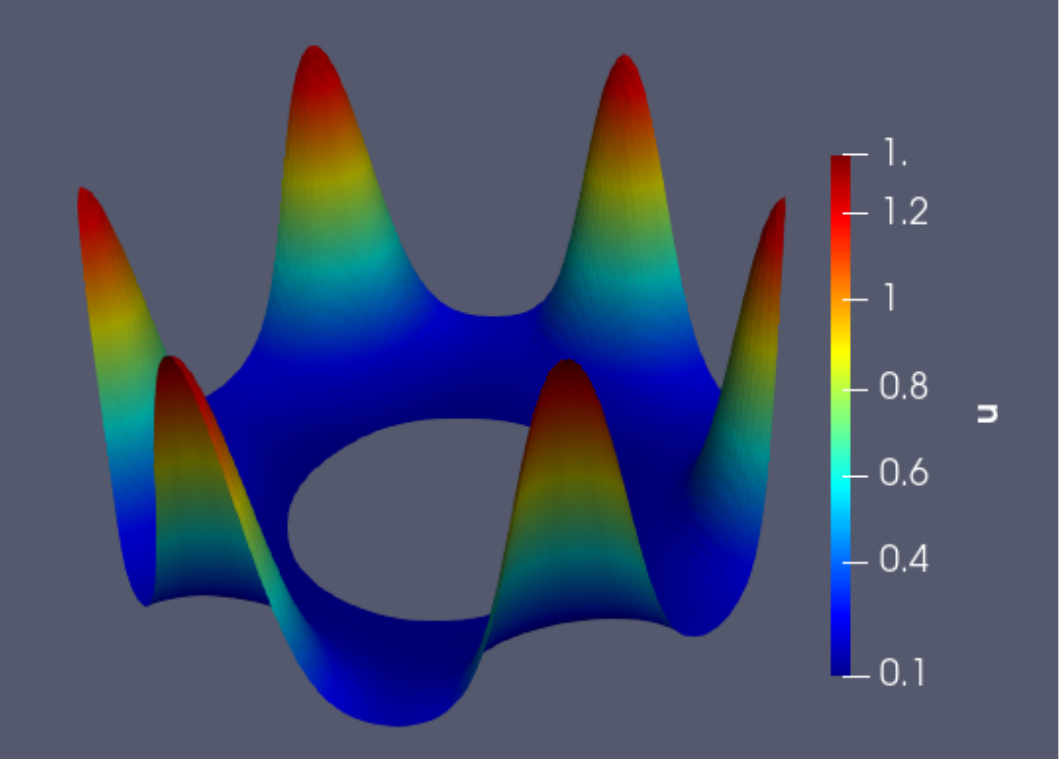} \\
{\small (j) t=2.2675}
\end{tabular}
\begin{tabular}{cc}
\includegraphics[width=0.3\linewidth]{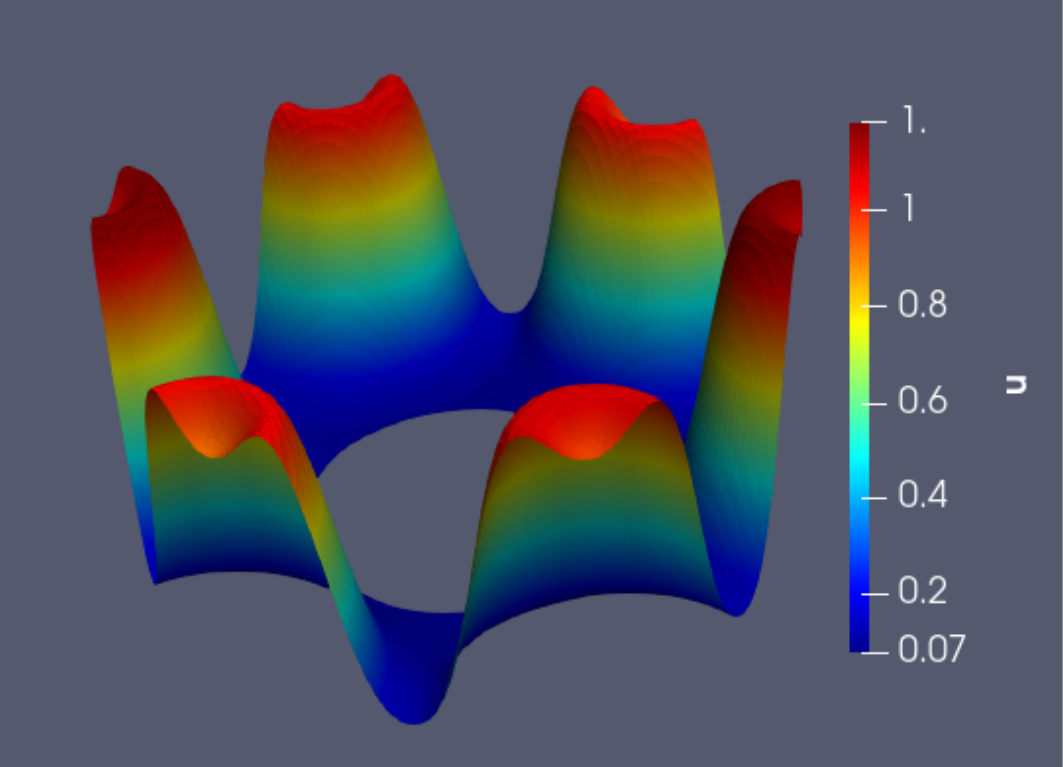} \\
{\small (k) t=2.4325}
\end{tabular}
\begin{tabular}{cc}
\includegraphics[width=0.3\linewidth]{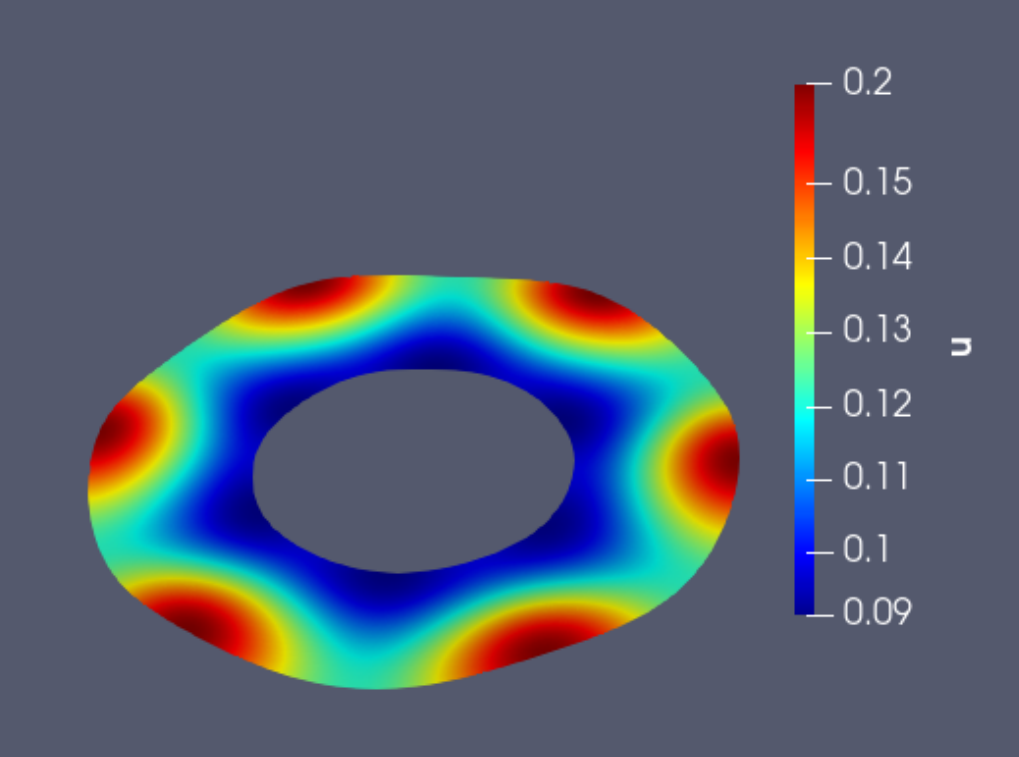} \\
{\small (l) t=2.5}
\end{tabular}
\caption{(a)-(j) 3D view of the finite element simulations corresponding to the $u$-component of the cross-diffusive reaction-diffusion system on disc shape domain exhibiting spatiotemporal time-periodic pattern formation. Parameters are selected to satisfy conditions of Theorems \ref{theo1} and \ref{Maincond} on $\rho$ with $\Delta t =0.0025$, $d=1$, $\gamma=250$, $d_u=-0.9$, $d_v=0.55$, $\alpha=0.085$ and $\beta=0.1$, as shown in Fig. \ref{HopfTransParameter}. We observe the classical phenomenon of "standing waves".}
\label{Limitcycle3}
\end{figure}

\begin{figure}[H]
\centering
\begin{tabular}{cc}
\includegraphics[width=1.05\textwidth]{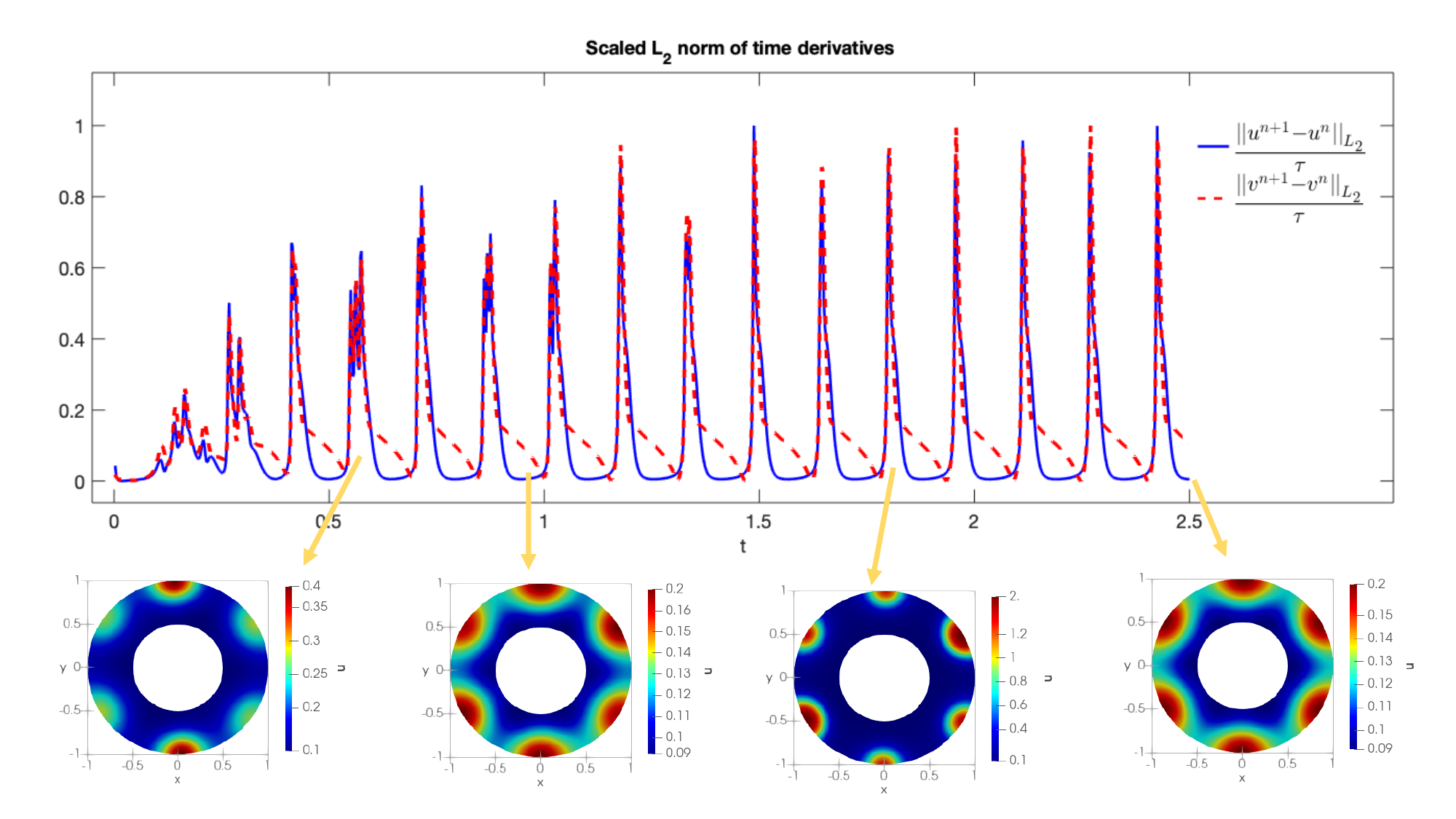} \\
\end{tabular}
\caption{The plot of the evolution of the $L_2$ norm of the discrete time-derivative of $u$ and $v$ finite element solutions with negative cross-diffusion. Model parameter values are selected as $\Delta t =0.0025$, $d=1$, $\gamma=250$, $d_u=-0.9$, $d_v=0.55$, $\alpha=0.085$ and $\beta=0.1$.} 
\label{NegativeCD}
\end{figure}

\section{Conclusion}\label{sec:Conclusion}
Investigating cross-diffusive reaction-diffusion systems is a rapidly emerging area of research for pattern formation and has numerous applications in many chemical and biological problems in developmental and cell biology, material, and plant sciences.  In this study,  we provide the importance of a proper parameter selection strategy through a rigorous understanding of the spatiotemporal behavior of solutions which is critically dependent on the size of the domain. An analytical linear stability method is applied to obtain the relationship between the domain size $\rho$ and the system parameters $d$, $d_u$,  $d_v$, and $\gamma$ on a flat ring shape domain. This work entails an important novelty by comparing the parameter spaces using the eigenvalues of the stability matrix on convex and non-convex geometries. To the authors' knowledge, this work is the first of its kind where such comparisons are made and detailed conclusions are derived. The detailed parameter spaces have been classified, taking into account the Hopf/transcritical bifurcations and Turing instabilities on the annulus.  It must be noted that most of the spaces generated when the self-diffusion coefficient $d=1$ only exist in the presence of cross-diffusion for a two-component reaction-diffusion system. We note that such a restriction does not hold for multi-component reaction-diffusion systems \cite{cotterell2015local,krause2021introduction,madzvamuse2010stability,klika2017history,van2021turing,krause2023concentration,tsubota2024bifurcation}. For two-component reaction-diffusion systems, such spaces do not exist in the absence of cross-diffusion. To confirm the predicted behaviour in the dynamics,  model parameter values from the parameter regions are hand-picked and the reaction-diffusion system with linear cross-diffusion is then solved with these parameter values by using the finite element method on an annular domain. Plots of the discrete $L_2$ norms of the discrete time-derivative of the solutions are generated to illustrate the temporal behaviour of the system as well as the formation of spatially inhomogeneous Turing patterns. For example, when parameter values are selected from the Turing parameter space, the $L_2$ norm exhibits rapid decay at the early stages, which is followed by rapid exponential growth associated with the growing modes of the linear reaction-diffusion system, when the real part of the eigenvalues is positive, and finally, the growth plateaus and starts to decay monotonically due to the effects of the nonlinear reaction-kinetics which act as bounds on the exponentially growing modes. Another example, is the periodic behaviour of the $L_2$ norm which is associated with the  periodicity of limit cycles and Hopf bifurcation types of dynamics, here model parameters are selected from the spaces satisfying Hopf and transcritical instabilities. Our numerical simulations provide the expected dynamical behaviour of the reaction-diffusion system considering the effect of cross-diffusion and the domain size on the ring shape domain.

Hence, the key research methodology and outcomes of our studies can be summarised as follows. A complete analytical exploration of the spatiotemporal dynamics in an activator-depleted reaction-diffusion system, a linear stability analysis providing the dual roles of cross-diffusion and domain size of an annulus,  the derivation of precise stability conditions through lower and upper bounds of the domain size,   a full classification of the model parameters,  and  a demonstration of how cross-diffusion relaxes the general conditions for the reaction-diffusion system to exhibit pattern formation.

Our current studies involve a multi-throned research study to (i) extend the analysis to stationary surfaces where analytical tractability can be established, (ii) understand the role of domain size during growth development on planar and evolving surfaces in the presence of linear cross-diffusion and (iii) to explore weakly nonlinear analysis when nonlinear cross-diffusion is considered. 

\section*{Acknowledgement}
GY would like to thank the Scientific and Technological Research Council of T\"{u}rkiye (T\"{U}BITAK) for their support.
GY would like to thank the Isaac Newton Institute for Mathematical Sciences, Cambridge, for support and hospitality during the programme Mathematics of movement: an interdisciplinary approach to mutual challenges in animal ecology and cell biology,  where partial work on this paper was undertaken and supported by EPSRC grant EP/R014604/1.  
This work (AM) was supported by the Canada Research Chair (Tier 1) in Theoretical and Computational Biology (CRC-2022-00147), the Natural Sciences and Engineering Research Council of Canada (NSERC), Discovery Grants Program (RGPIN-2023-05231), the British Columbia Knowledge Development Fund (BCKDF), Canada Foundation for Innovation – John R. Evans Leaders Fund – Partnerships (CFI-JELF), the British Columbia Foundation for Non-Animal Research, and the UKRI Engineering and Physical Sciences Research Council (EPSRC: EP/J016780/1).  RB was partially supported by National Funding from FCT - Fundac\~{a}o para a Ci\^{e}ncia e Tecnologia, Portugal, under the project UIDB/04561/2020: https://doi.org/10.54499/UIDB/04561/2020.

\bibliographystyle{ws-ijbc}
\bibliography{sample}

\end{document}